\documentclass[11pt,reqno]{amsart}

\usepackage{amssymb, amsmath, amsthm}
\usepackage{hyperref}
\usepackage[alphabetic,lite]{amsrefs}
\usepackage{verbatim}
\usepackage{amscd}   
\usepackage[all]{xy} 
\usepackage{youngtab} 
\usepackage{young} 
\usepackage{ytableau}
\usepackage{tikz}
\usepackage{ mathrsfs }
\usepackage{cases}
\usepackage{array}

\textheight8.6in
\topmargin = -12pt
\setlength{\textwidth}{7.1in}
\setlength{\oddsidemargin}{-0.2in}
\setlength{\evensidemargin}{-0.2in}

\newcommand{\defi}[1]{{\upshape\sffamily #1}}

\renewcommand{\a}{\alpha}
\renewcommand{\b}{\beta}

\newcommand{\D}{\mathcal{D}}
\newcommand{\E}{\mathcal{E}}

\newcommand{\G}{\operatorname{G}}
\newcommand{\kk}{{\mathbf k}}

\newcommand{\LL}{\Lambda}

\newcommand{\onto}{\twoheadrightarrow}
\newcommand{\oo}{\otimes}

\renewcommand{\P}{\mathcal{P}}
\newcommand{\s}{\sigma}
\renewcommand{\S}{\mathfrak{S}}
\renewcommand{\t}{\underline{t}}
\newcommand{\w}{\underline{w}}

\let\uuu\u 
\renewcommand{\u}{\underline{u}}

\renewcommand{\v}{\underline{v}}
\newcommand{\x}{\underline{x}}
\newcommand{\y}{\underline{y}}
\newcommand{\z}{\underline{z}}

\newcommand{\X}{\mathcal{X}}
\newcommand{\Y}{\mathcal{Y}}

\newcommand{\Z}{\mathcal{Z}}

\newcommand{\Ann}{\operatorname{Ann}}

\newcommand{\Ext}{\operatorname{Ext}}
\newcommand{\GL}{\operatorname{GL}}

\newcommand{\Sym}{\operatorname{Sym}}
\newcommand{\Tor}{\operatorname{Tor}}

\newcommand{\codim}{\operatorname{codim}}
\newcommand{\coker}{\operatorname{coker}}

\renewcommand{\ker}{\operatorname{ker}}

\newcommand{\pdim}{\operatorname{pdim}}
\newcommand{\reg}{\operatorname{reg}}

\newcommand{\bb}[1]{\mathbb{#1}}

\renewcommand{\rm}[1]{\textrm{#1}}
\newcommand{\mc}[1]{\mathcal{#1}}
\newcommand{\mf}[1]{\mathfrak{#1}}

\newcommand{\op}[1]{\operatorname{#1}}

\newcommand{\ul}[1]{\underline{#1}}

\def\lra{\longrightarrow}

\newtheorem{theorem}{Theorem}[section]
\newtheorem*{theorem*}{Theorem}
\newtheorem*{problem*}{Problem}
\newtheorem{lemma}[theorem]{Lemma}

\newtheorem{proposition}[theorem]{Proposition}
\newtheorem{corollary}[theorem]{Corollary}
\newtheorem*{corollary*}{Corollary}

\newtheorem*{main-thm*}{Main Theorem}
\newtheorem*{linear-resolutions*}{Theorem on Linear Resolutions}
\newtheorem*{reg-pdim*}{Theorem on Regularity and Projective Dimension}
\newtheorem*{injectivity-Ext*}{Theorem on Injectivity of Maps from Ext to Local Cohomology}
\newtheorem*{reg-pows*}{Theorem on Regularity of Powers}
\newtheorem*{invariant-chains*}{Theorem on Invariant Chains of Ideals}

\theoremstyle{definition}
\newtheorem{definition}[theorem]{Definition}
\newtheorem*{definition*}{Definition}
\newtheorem{example}[theorem]{Example}
\newtheorem{problem}[theorem]{Problem}

\theoremstyle{remark}
\newtheorem{remark}[theorem]{Remark}
\newtheorem*{remark*}{Remark}

\numberwithin{equation}{section}





\newcommand{\claudiu}[1]{{\color{red} \sf $\clubsuit\clubsuit\clubsuit$ Claudiu: [#1]}}

\begin{document}

\title{Regularity of $\S_n$-invariant monomial ideals}

\author{Claudiu Raicu}
\address{Department of Mathematics, University of Notre Dame, 255 Hurley, Notre Dame, IN 46556\newline
\indent Institute of Mathematics ``Simion Stoilow'' of the Romanian Academy}
\email{craicu@nd.edu}

\subjclass[2010]{Primary 13D07, 05E40, 13D45}

\date{\today}

\keywords{Monomial ideals, regularity, projective dimension, $\Ext$ modules, local cohomology}

\begin{abstract} For a polynomial ring $S$ in $n$ variables, we consider the natural action of the symmetric group $\S_n$ on $S$ by permuting the variables. For an $\S_n$-invariant monomial ideal $I\subseteq S$ and $j\geq 0$, we give an explicit recipe for computing the modules $\Ext^j_S(S/I,S)$, and use this to describe the projective dimension and regularity of $I$. We classify the $\S_n$-invariant monomial ideals $I$ that have a linear free resolution, and also characterize those which are Cohen--Macaulay. We then consider two settings for analyzing the asymptotic behavior of regularity: one where we look at powers of a fixed ideal $I$, and another where we vary the dimension of the ambient polynomial ring and examine the invariant monomial ideals induced by $I$. In the first case we determine the asymptotic regularity for those ideals $I$ that are generated by the $\S_n$-orbit of a single monomial by solving an integer linear optimization problem. In the second case we describe the behavior of regularity for any $I$, recovering a recent result of Murai.
\end{abstract}

\maketitle

\section{Introduction}\label{sec:intro}

We let $S = \kk[e_1,\cdots,e_n]$ be a polynomial ring in $n$ variables over a field $\kk$. We let $\S_n$ denote the symmetric group on $n$ letters, acting on $S$ by permutations of the variables. If $\x=(x_1,\cdots,x_n)\in\bb{Z}^n_{\geq 0}$, we write
\[ e^{\x} = e_1^{x_1}\cdots e_n^{x_n}\]
for the corresponding monomial in $S$. We let $I_{\x}$ denote the ideal in $S$ generated by the $\S_n$-orbit of $e^{\x}$:
\begin{equation}\label{eq:def-Ix}
 I_{\x} = \langle \s(e^{\x}) : \s\in\S_n\rangle.
\end{equation}
We say that $\x$ is a \defi{partition} (or that it is \defi{dominant}) if $x_1\geq\cdots\geq x_n$, and let $\P_n$ denote the set of all partitions in $\bb{Z}^n_{\geq 0}$. Observe that $I_{\x} = I_{\y}$ for a unique $\y\in\P_n$, obtained by arranging the entries of $\x$ in non-increasing order. Every $\S_n$-invariant ideal $I\subseteq S$ is determined by a (finite) subset $\X\subset\P_n$ via $I=I_{\X}$, where
\begin{equation}\label{eq:def-IX}
I_{\mc{X}} = \sum_{\x\in\mc{X}} I_{\x}.
\end{equation}
The goal of this paper is to study homological invariants of the ideals $I_{\X}$, for which the following definition will play a fundamental role. Recall that if $\x\in\P_n$ then $\x'$ denotes the \defi{conjugate partition}, where $x'_i$ counts the number of parts of $\x$ with $x_j\geq i$. If $\x,\y\in\P_n$, we write $\x\geq\y$ if $x_i\geq y_i$ for all $i$. If $\x\in\P_n$ and $c\geq 0$ we write $\x(c)$ for the partition whose $i$-th part is $x_i(c)=\min(x_i,c)$.

\begin{definition}\label{def:ZX}
 For a subset $\mc{X}\subset\P_n$ we define $\mc{Z}(\mc{X})$ to be the set consisting of pairs $(\z,l)$, where $\z\in\P_n$ and $l\geq 0$ are such that if we write $c=z_1$ then the following hold:
 \begin{enumerate}
  \item There exists a partition $\ul{x}\in\mc{X}$ such that $\ul{x}(c)\leq\z$ and $x'_{c+1}\leq l+1$.
  \item For every partition $\ul{x}\in\mc{X}$ satisfying (1) we have $x'_{c+1}=l+1$.
 \end{enumerate}
\end{definition}

\begin{reg-pdim*}
 For every subset $\X\subset\P_n$ with $I_{\X}\neq S$ we have
 \begin{equation}\label{eq:reg-pdim-IX}
  \reg(I_{\X}) = \max \left\{ |\z| + l + 1 : (\z,l) \in \Z(\X)\right\}\mbox{ and }\pdim(I_{\X}) = \max \left\{ n-1-l : (\z,l) \in \Z(\X)\right\},
 \end{equation}
 where $\reg(-)$ (resp. $\pdim(-)$) denotes \defi{Castelnuovo--Mumford regularity} (resp. \defi{projective dimension}).
\end{reg-pdim*}

To simplify notation, we will omit trailing zeros from a partition $\x$, and write for instance $(5,1)$ instead of $(5,1,0,0)$ when $n=4$. For visualization purposes, often in examples we will draw a \defi{Young diagram} instead of writing the entries of the corresponding partition: for instance $\x=(4,2,1)$ will be pictured as
\[\yng(4,2,1)\]
We will also write $\emptyset$ for the \defi{empty partition}, all of whose parts are equal to $0$. Notice that the partition $\x(c)$ considered in Definition~\ref{def:ZX} is the one formed by the first $c$ columns of the Young diagram of $\x$. With these conventions, we now illustrate the Theorem on Regularity and Projective Dimension with an example.

\begin{example}\label{ex:reg-pdim}
Consider the case when $n=3$ and $\ytableausetup{smalltableaux,aligntableaux=center} \X = \{(2,1,1),(4,2)\}=\left\{\ydiagram{2,1,1},\ydiagram{4,2}\right\}$. We have that
\[ I_{\X} = \langle e_1^2 e_2 e_3,e_1 e_2^2 e_3,e_1 e_2 e_3^2, e_1^4 e_2^2,e_1^4 e_3^2,e_2^4 e_1^2,e_2^4 e_3^2,e_3^4 e_1^2,e_3^4 e_2^2\rangle,\]
and a Macaulay2 \cite{M2} calculation finds that the Betti table of $I_{\X}$ is (recall that the Betti number $\b_{i,i+j} = \dim_{\kk}\Tor^S_i(I_{\X},S)_{i+j}$ is placed in row $j$, column $i$, and that a dash indicates a vanishing Betti number)
\[
 \begin{array}{c|ccc}
      &0&1&2\\ \hline
      \text{4}&3&3&1\\ 
      \text{5}&-&-&-\\
      \text{6}&6&6&-\\
      \text{7}&-&3&3\\
\end{array}
\]
In particular, this shows that $\reg(I_\X)=7$ and $\pdim(I_\X) = 2$. Using Definition~\ref{def:ZX} we find the following table, whose first row describes the elements of $\Z(\X)$:
\[
\setlength{\extrarowheight}{2pt}
\ytableausetup{smalltableaux,aligntableaux=center}
\begin{array}{c|c|c|c|c|c|c}
(\z,l) & \left(\emptyset,1\right) & \left(\ydiagram{1,1},1\right) & \left(\ydiagram{1,1,1},0\right) & \left(\ydiagram{2,2},0\right) & \left(\ydiagram{3,2},0\right) & \left(\ydiagram{3,3},0\right) \\[1em]
 \hline 
|\z|+l+1 & 2 & 4 & 4 & 5 & 6 & 7 \\ \hline
n-1-l & 1 & 1 & 2 & 2 & 2 & 2 \\
\end{array}
\]
It follows that as $(\z,l)$ varies in $\Z(\X)$, the maximum value of $|\z|+l+1$ is $\reg(I_\X)$, and the maximum value of $n-1-l$ is $\pdim(I_\X)$, as predicted by (\ref{eq:reg-pdim-IX}).
\end{example}

To put the Theorem on Regularity and Projective Dimension in context, we analyze a few special cases. We make one more convention: for partitions with repeating parts, we use the abbreviation $(b^a)$ for the sequence $(b,b,\cdots,b)$ of length $a$; for instance $(3,3,3,3,1,1)$ will be written as $(3^4,1^2)$. 

\smallskip

\noindent {\bf The ideals $I_{\x}$.} When $\X=\{\x\}$ is a singleton, one has using Definition~\ref{def:ZX} that
\begin{equation}\label{eq:Zx}
 \Z(\X) = \{(\z,l)\in\P_n\times\bb{Z}_{\geq 0} : \mbox{there exists }0\leq c\leq x_1-1\mbox{ such that }z_1=c,\ \z\geq\x(c),\ l=x'_{c+1}-1\}.
\end{equation}
It follows that for $(\z,l)\in\Z(\X)$, the quantities $|\z|+l+1$ and $n-l-1$ are both maximized when $c=x_1-1$ and $\z=(c^n)$. This shows that
\begin{equation}\label{eq:reg-pdim-Ix}
 \reg(I_{\x}) = n\cdot(x_1-1) + x'_{x_1},\mbox{ and }\pdim(I_{\x}) = n - x'_{x_1}.
\end{equation}

In the case when $\x$ has distinct parts ($x_1>x_2>\cdots>x_n$), the ideals $I_{\x}$ are known as \defi{permutohedron ideals}, and their minimal free resolution is constructed explicitly as a cellular resolution (see \cite[Section~4.3.3]{mil-stu} or \cite{bayer-sturmfels}). When $\x$ has repeated parts, the cellular resolution is no longer minimal, but the Betti numbers of $I_{\x}$ can still be determined \cite{kumar-kumar}. One can then also derive (\ref{eq:reg-pdim-Ix}) from the explicit knowledge of the (non-)vanishing behavior of the Betti numbers of $I_{\x}$.

\smallskip

\noindent {\bf Square-free ideals.} The square-free $\S_n$-invariant monomial ideals have a simple classification -- we have one for each $p=1,\cdots,n$, which is denoted $I_p$ and is generated by all the square-free monomials of degree~$p$. Using the earlier notation, we have $I_p = I_{(1^p)}$ for each $p$. Using (\ref{eq:reg-pdim-Ix}) we obtain $\reg(I_p)=p$ and $\pdim(I_p)=n-p$, that is, $I_p$ is Cohen--Macaulay with a linear resolution. In addition to the well-understood Betti numbers of~$I_p$, we note that the action of $\S_n$ on the minimal resolution of $I_p$ was described in \cite{galetto}. It would be interesting to understand more generally the minimal resolutions of the ideals $I_{\X}$ (see \cite{murai} for their asymptotic behavior).

\smallskip

\noindent {\bf Polymatroidal ideals.} The products $I = I_{p_1} \cdot I_{p_2} \cdots$ are examples of \defi{polymatroidal ideals}, as defined in \cite{conca-herzog}, and in particular they have a linear resolution. If we assume that $p_1\geq p_2\geq\cdots$, we can form a partition $\x$ by declaring that $x'_i = p_i$ for all $i$. We can then write $I=I_{\X}$ where $\X$ is the set of all partitions $\y\in\P_n$ that have size $|\x|$ and are \defi{dominated} by $\x$ (that is, $y_1+\cdots+y_i\leq x_1+\cdots+x_i$ for all $i$). We invite the reader to check, using Definition~\ref{def:ZX}, that every $(\z,l)\in\Z(\X)$ satisfies $|\z|+l+1\leq |\x|$, and equality is attained if we take $c=x_1-1$, $\z=\x(c)$ and $l=x'_{c+1}-1$. Using (\ref{eq:reg-pdim-IX}) this shows that $\reg(I) = |\x|$, providing an alternative verification that $I$ has a linear resolution (see also the discussion on symmetric shifted ideals).

\smallskip

Specializing the discussion above to the case $p_1=p_2=\cdots$, we see that $\reg(I_p^d)=p\cdot d$ for all $d\geq 1$. This is a very special instance of a general phenomenon, discovered in \cites{cutkosky-herzog-trung,kodiyalam}, which asserts that for an arbitrary homogeneous ideal $I$, the regularity of $I^d$ is computed by a linear function $a\cdot d + b$ for $d\gg 0$. If $I$ is generated in a single degree $r$ then one has $a=r$, but the constant term $b$ is in general quite mysterious. For ideals of minors of a generic matrix, this constant was studied in \cite{raicu-reg-coh}. The corresponding problem for ideals of Pfaffians is resolved in \cite{perlman}. There is an extensive literature analyzing the case when $I$ is a monomial edge ideal (see \cites{nevo-peeva,ban,BHT} and the references therein). The theorem below computes the constant term $b$ in the case when $I=I_{\w}$ for every $\w\in\P_n$.

\begin{reg-pows*}
If we write the conjugate partition to $\w$ as $\w' = (n^{a_0},h_1^{a_1},h_2^{a_2},\cdots,h_k^{a_k})$ with $n>h_1>\cdots>h_k>0$, then we have that $\reg(I_{\w}^d) = d\cdot |\w| + b$ for $d\gg 0$, where
\[b = (n-h_1)\cdot(a_1-1) + (h_1-h_2)\cdot (a_2-1) + \cdots + (h_{k-1}-h_k)\cdot (a_k-1) + (h_k-1)\cdot (a_k-1).\]
 In particular, the powers $I_{\w}^d$ have a linear resolution for $d\gg 0$ if and only if $a_1=\cdots=a_k=1$, that is, if and only if $w_i-w_{i+1}\leq 1$ for all $i=1,\cdots,n-1$.
\end{reg-pows*}

In light of (\ref{eq:reg-pdim-IX}), finding the exact value of the regularity of an $\S_n$-invariant monomial ideal $I$ amounts to solving a linear integer optimization problem. For $I=I_{\w}^d$, this problem is a high-multiplicity partitioning problem, which is an instance of a resource-allocation problem that is fundamental in Operations Research. In \cite{raicu-partitioning} we have found essentially optimal criteria for the feasibility of this optimization problem when $d\gg 0$, and we apply the results established there to derive a proof of the Theorem on Regularity of Powers.

\smallskip

\noindent {\bf Symmetric (strongly) shifted ideals.} In \cite{symmetric-shifted}, the authors study a class of $\S_n$-invariant monomial ideals, called \defi{symmetric shifted}, along with the subclass of \defi{symmetric strongly shifted} ideals (see Section~\ref{sec:linear} for the terminology). They show that these ideals have a linear free resolution, describe their Betti numbers, and leave open the question of classifying the $\S_n$-invariant monomial ideals that have a linear resolution. We answer their question below, and also identify an interesting class of symmetric strongly shifted ideals.

\begin{linear-resolutions*}
 An $\S_n$-invariant monomial ideal $I$ has a linear free resolution if and only if it is symmetric shifted. If $I_{\w}^d$ has a linear resolution for $d\gg 0$ then it is symmetric strongly shifted.
 \end{linear-resolutions*}

\smallskip

\noindent {\bf $\S_n$-invariant ideals for varying $n$.} A problem that has attracted much interest in recent years (in the context of representation stability, FI-modules, Noetherianity up to symmetry) is concerned with the study of chains of (not necessarily monomial) ideals $(I_n)_{n\geq 1}$ with $I_n \subset \kk[e_1,\cdots,e_n]$ being $\S_n$-invariant, and
\begin{equation}\label{eq:invariant-chain}
 \S_m(I_n) \subseteq I_m \mbox{ for }m\geq n.
\end{equation}
It is conjectured (in a slightly more general setting) in \cites{LNNR1,LNNR2} that $\reg(I_n)$ and $\pdim(I_n)$ are eventually described by linear functions on $n$. 

It is known that for chains of $\S_n$-invariant ideals as above, the inclusions (\ref{eq:invariant-chain}) are equalities for $m>n\gg 0$ \cite{cohen,AH,HS}, so they depend on a finite amount of information. When the ideals $I_n$ are monomial, this information is simply a finite set of partitions, as follows. Since $n$ varies, it is convenient now to regard $\mc{P}_n$ as a subset of $\mc{P}_{n+1}$ by appending a zero to any $n$-tuple $\x\in\P_n$ to get a tuple in $\P_{n+1}$. We write $\P=\bigcup_{n}\P_n$ for the set of all partitions, and given any subset $\X\subset\P$, we write
\[ \X_n = \{\x\in\X : \x\mbox{ has at most }n\mbox{ parts}\},\]
and we view $\X_n$ as a subset of $\P_n$ in the natural way. Every chain $(I_n)_{n\geq 1}$ of $\S_n$-invariant monomial ideals has the property that there exists a finite subset $\X\subset\P$ with $I_n=I_{\X_n}$ for $n\gg 0$. We have the following.

\begin{invariant-chains*}
 Let $\X$ denote a finite non-empty set of pairwise incomparable partitions, and define
 \begin{equation}\label{def:mwW}
  m = \max\{ i : x_i\neq 0\mbox{ for some }\x\in\X\},\quad w = \min\{x_1 : \ul{x}\in\mc{X}\},\quad\mbox{ and }\quad W = \max\{x_1 : \ul{x}\in\mc{X}\}.
 \end{equation}
 If we let $\Y = \{ \x-\x(w-1) : \x\in\X\}$, then we have the following.
\begin{enumerate}
 \item There exists a constant $C$ such that $\reg(I_{\Y_n})=C$ for $n\geq m$.
 \item We have $\reg(I_{\X_n}) = (w-1)\cdot n + C$ for $n\geq \max\bigl(m,(m-1)\cdot(W-w+2)-C\bigr)$.
\end{enumerate}
\end{invariant-chains*}

\noindent The theorem above is also proved in \cite{murai}, by studying $\Tor$ instead of $\Ext$ modules. A slight improvement in our work comes from the effective bound in part (2). As explained in Example~\ref{ex:regIn-sharp}, this bound is optimal.

\bigskip

All the theorems discussed so far are shadows of a more refined result that describes in a very precise fashion the graded components of the modules $\Ext^j_S(S/I,S)$, for arbitrary $\S_n$-invariant monomial ideals~$I$. This is the main result of the paper, and it follows closely the corresponding statement in the case of determinantal ideals \cite[Theorem~3.2]{raicu-reg-coh}. In particular, we not only describe the individual $\Ext$ modules, but also the natural maps induced by inclusions $I\supseteq J$, so one can for instance derive formulas for all the modules $\Ext^j_S(I/J,S)$. We will formulate our results here in a way that parallels those of \cite{raicu-reg-coh}.

We note that a monomial ideal $I\subseteq S$ is the same as one that is preserved by the natural action of the $n$-dimensional torus $(\kk^\times)^n$ on $S$ by rescaling the coordinates. If we consider the semi-direct product 
\begin{equation}\label{eq:def-G}
\G = (\kk^\times)^n \rtimes \S_n,
\end{equation}
where $\S_n$ acts on $(\kk^\times)^n$ by permuting the factors (also known as the \defi{wreath product} $\kk^\times \wr \S_n$), then an $\S_n$-invariant monomial ideal in $S$ is precisely the same as a $\G$-invariant ideal in $S$.

\begin{main-thm*}
 To any $\G$-invariant ideal $I\subseteq S$ we can associate a finite set $\mc{M}(I)$ of $\G$-equivariant $S$-modules with the property that for each $j\geq 0$
 \[\Ext^j_S(S/I,S) \simeq \bigoplus_{M\in\mc{M}(I)} \Ext^j_S(M,S),\]
 where the above isomorphism is $\G$-equivariant and degree preserving (but in general it does not preserve the $S$-module structure). In particular, we get
 \[\reg(S/I) = \max_{M\in\mc{M}(I)}\reg(M).\]
 The sets $\mc{M}(I)$ and the modules $\Ext^j_S(M,S)$ for $M\in\mc{M}(I)$ can be computed explicitly. Furthermore, the association $I\mapsto\mc{M}(I)$ has the property that whenever  $I\supseteq J$ are $\G$-invariant ideals, the (co)kernels and images of the induced maps $\Ext^j_S(S/I,S)\lra\Ext^j_S(S/J,S)$ can be computed as follows.
  \[\ker\left(\Ext^j_S(S/I,S)\lra\Ext^j_S(S/J,S)\right) = \bigoplus_{M\in\mc{M}(I)\setminus\mc{M}(J)}\Ext^{j}_S(M,S),\]
  \[\operatorname{Im}\left(\Ext^j_S(S/I,S)\lra\Ext^j_S(S/J,S)\right) = \bigoplus_{M\in\mc{M}(I)\cap\mc{M}(J)}\Ext^{j}_S(M,S),\]
  \[\coker\left(\Ext^j_S(S/I,S)\lra\Ext^j_S(S/J,S)\right) = \bigoplus_{M\in\mc{M}(J)\setminus\mc{M}(I)}\Ext^{j}_S(M,S).\]
 Finally, if we write $I:I_p^{\infty}$ for the saturation of $I$ with respect to $I_p$ then $\mc{M}(I:I_p^{\infty})\subseteq\mc{M}(I)$. More precisely
 \[\mc{M}(I:I_p^{\infty}) = \{M\in\mc{M}(I): \Ann(M) \not\subseteq I_p\}.\]
\end{main-thm*}

The precise statement of the Main Theorem is given in Theorem~\ref{thm:Ext-split-IX}. It has the remarkable consequence, pointed out by Satoshi Murai, that all $\S_n$-invariant monomial ideals are \defi{sequentially Cohen--Macaulay} (see \cite[Definition III.2.9]{stanley}, and Section~\ref{subsec:seq-CM}). The final theorem below characterizes the $\S_n$-invariant monomial ideals that are Cohen--Macaulay.

A famous question of Eisenbud--Musta\c t\uuu a--Stillman \cite[Question~6.2]{EMS} asks under what circumstances are the natural maps $\Ext^j_S(S/I,S) \lra H_I^j(S)$ injective. As explained in \cite[Example~6.3]{EMS}, a necessary condition is that the ideal $I$ is unmixed. For $\S_n$-invariant monomial ideals, we show that this condition is also sufficient, and that it is further equivalent to asking that the quotient $S/I$ is Cohen--Macaulay. We further characterize combinatorially those ideals for which these equivalent properties hold, as follows.

\begin{injectivity-Ext*}
 Let $\X$ be a set of pairwise incomparable partitions, and consider the corresponding ideal $I=I_{\X}\subseteq S$. The following are equivalent:
 \begin{enumerate}
  \item The natural maps $\Ext^j_S(S/I,S) \lra H_I^j(S)$ are injective for all $j$.
  \item $I$ is unmixed.
  \item Every partition $\x\in\X$ satisfies $x_1=\cdots=x_p$, where $p=\dim(S/I)+1$.
  \item For each $(\z,l)\in\Z(\X)$ one has $l=\dim(S/I)$.
  \item $S/I$ is Cohen--Macaulay.
 \end{enumerate}
\end{injectivity-Ext*}

\noindent As remarked in the proof of \cite[Corollary 3.8]{murai}, $\dim(S/I)=p-1$ where $p$ is the minimal number of parts of a partition in $\X$, that is, $p=\min\{ x'_1 : \x\in\X\}$.

\medskip

\noindent{\bf Organization.} In Section~\ref{sec:prelim} we establish basic facts about $\S_n$-invariant monomial ideals, discuss combinatorial aspects of Definition~\ref{def:ZX}, and study the $G$-equivariant $S$-modules that occur in the Main Theorem. In Section~\ref{sec:Ext-split-thick} we verify the Main Theorem, and deduce from there the Theorem on Regularity and Projective Dimension, as well as the Theorem on Injectivity of Maps from Ext to Local Cohomology. Section~\ref{sec:linear} is concerned with characterizing ideals with a linear free resolution, while Section~\ref{sec:reg-pows} discusses the explicit formula for the linear function computing regularity of powers of an ideal generated by the $\S_n$-orbit of a monomial. We end with Section~\ref{sec:vary-n} where we establish the Theorem on Invariant Chains of Ideals.

\section{Preliminaries}\label{sec:prelim}

The goal of this section is to introduce the main objects that are needed for the precise statement and the proof of the Main Theorem. In Section~\ref{subsec:GLinv-ideals} we discuss basic properties of $\S_n$-invariant monomial ideals, and discuss a number of important combinatorial implications of Definition~\ref{def:ZX}. In Section~\ref{subsec:subquots-Jzl} we introduce the $G$-equivariant $S$-modules that make up the sets $\mc{M}(I)$ in the Main Theorem, and for each such module $M$ and for $j\geq 0$ we compute $\Ext^j_S(M,S)$, and deduce from this calculation the projective dimension and Castelnuovo--Mumford regularity of $M$.

\subsection{$\G$-invariant ideals in $S$}\label{subsec:GLinv-ideals}

We let $S=\kk[e_1,\cdots,e_n]$ and let $\G$ be as in (\ref{eq:def-G}). We write $\langle V \rangle_{\kk}$ for the $\kk$-linear span of a collection $V$ of polynomials in $S$. For a partition $\x\in\P_n$ we let
\[S_{\x} = \langle \s(e^{\x}) : \s\in\S_n \rangle_{\kk}\]
which is an irreducible $G$-representation. The ring $S$ has a multiplicity-free decomposition
\[ S = \bigoplus_{\x\in\P_n} S_{\x}\]
into irreducible $G$-representations. Recall the definition of the ideals $I_{\x}$ in (\ref{eq:def-Ix}), generated by the component $S_{\x}$ in the decomposition above. We have
\begin{equation}\label{eq:dec-Ix}
 I_{\x} = \bigoplus_{\y\geq\x} S_{\y}.
\end{equation}

If we write $\sup(\x,\y)$ for the partition defined via
\begin{equation}\label{eq:supxy}
 \sup(\x,\y)_i = \max(x_i,y_i)
\end{equation}
then it follows from (\ref{eq:dec-Ix}) that
\begin{equation}\label{eq:IXcapIY}
 I_{\X} \cap I_{\Y} = \sum_{\x\in\X,\y\in\Y} I_{\sup(\x,\y)}.
\end{equation}

For every $\S_n$-invariant monomial ideal $I$ there exists a canonical choice of a subset $\X(I)\subset\P_n$ such that $I = I_{\X(I)}$. The set $\X(I)$ consists of the minimal partitions $\x\in\P_n$ (with respect to the order $\geq$), such that $e^{\x}\in I$. Up to the action of $\S_n$, the elements of $\X(I)$ give the minimal set of monomial generators of $I$.

\begin{remark}\label{rem:lattice-inv-ideals}
 The classification of $\G$-invariant ideals in $S$, together with (\ref{eq:IXcapIY}), shows that the lattice of $\G$-invariant ideals in $S$ is ismorphic to that of $\GL_m(\bb{C}) \times \GL_n(\bb{C})$-equivariant ideals in $\Sym(\bb{C}^m \oo \bb{C}^n)$, which was studied in \cite{deconcini-eisenbud-procesi}. Under this correspondence, the square-free ideals $I_p\subseteq S$ correspond to the determinantal ideals of $p\times p$ minors of the generic $m\times n$ matrix. Moreover, all the filtrations constructed in \cite{raicu-reg-coh} from chains of invariant ideals have corresponding analogues in the current setting. We will show that, just as in the case of matrices, these filtrations exhibit nice homological properties. 
\end{remark}

For $l=0,\cdots,n-1$ and $\z\in\P_n$, we consider the collection of partitions obtained from $\z$ by adding a single box to its Young diagram in row $(l+1)$ or higher (see \cite[Section~2B]{raicu-weyman} or \cite[Section~2.1]{raicu-reg-coh} for analogous definitions in the case of determinantal rings)
\begin{equation}\label{eq:defSucc}
 \mf{succ}(\z,l) = \{\x\in\P_n:\x\geq\z\rm{ and }x_i>z_i\rm{ for some }i>l\},\mbox{ and let}
\end{equation}
\begin{equation}\label{eq:defJzl}
 J_{\z,l} = I_{\z}/I_{\mf{succ}(\z,l)}.
\end{equation}

\begin{example}\label{ex:J-empty-l}
 If $\z=\emptyset$ then $I_{\z}=S$ and $I_{\mf{succ}(\z,l)}=I_{l+1}$, so $J_{\z,l}=S/I_{l+1}$ is the coordinate ring of the union of all the coordinate planes of dimension $l$.
\end{example}

To every $(\z,l)$ with $z_1=\cdots=z_{l+1}$, we associate the collection of rectangular partitions
\begin{equation}\label{eq:def-Yzl}
 \Y_{\z,l} = \left\{ \bigl((z_1+1)^{l+1}\bigr) \right\} \cup \left\{ \bigl((z_i+1)^i\bigr) : i>l+1\rm{ and }z_{i-1}>z_i\right\}.
\end{equation}

\begin{remark}\label{rem:z1--zl+1}
 As explained in \cite[Remark~3.4]{raicu-reg-coh}, it follows from Definition~\ref{def:ZX} that the condition $z_1=\cdots=z_{l+1}$ is automatically satisfied when $(\z,l)\in\Z(\X)$. Moreover, this condition is equivalent to the fact that the Young diagram of $\z$ has columns of size at least $l+1$, that is, $z'_j \geq l+1$ for all $j\leq z_1$.
\end{remark}

Unless otherwise specified, we will only consider pairs $(\z,l)$ with $z_1=\cdots=z_{l+1}$. We have the following.

\begin{lemma}\label{lem:zl-in-ZY-Yzl}
 If $(\z,l)\in\Z(\Y)$ then $I_{\Y} \subseteq I_{\Y_{\z,l}}$.
\end{lemma}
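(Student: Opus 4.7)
The plan is to reduce $I_{\Y}\subseteq I_{\Y_{\z,l}}$ to a statement about partition dominance. By the decomposition (\ref{eq:dec-Ix}), the containment is equivalent to the statement that for every $\y\in\Y$ there exists some $\mathbf{r}\in\Y_{\z,l}$ with $\y\geq\mathbf{r}$. So I would fix $\y\in\Y$, set $c=z_1$ (so that $z_1=\cdots=z_{l+1}=c$ by Remark~\ref{rem:z1--zl+1}), and search for a suitable rectangle in $\Y_{\z,l}=\{((c+1)^{l+1})\}\cup\{((z_i+1)^i) : i>l+1,\ z_{i-1}>z_i\}$.

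I would then split on whether $y_{l+1}\geq c+1$. If yes, then $y_1\geq\cdots\geq y_{l+1}\geq c+1$ immediately gives $\y\geq((c+1)^{l+1})$. Otherwise $y_{l+1}\leq c$, and the key input is condition~(2) of Definition~\ref{def:ZX}: if we had $\y(c)\leq\z$, then $\y$ would satisfy condition~(1) (since $y'_{c+1}\leq l<l+1$), and condition~(2) would force $y'_{c+1}=l+1$, contradicting $y_{l+1}\leq c$. Hence $\y(c)\not\leq\z$; since $y_k(c)\leq c=z_k$ rules out $k\leq l+1$, there must exist some $k>l+1$ with $y_k>z_k$.

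Let $i$ be the smallest such index. Because $\y$ is a partition, $y_j\geq y_i\geq z_i+1$ for all $j\leq i$, which gives $\y\geq ((z_i+1)^i)$. The main obstacle, and the reason for the specific choice of $i$, is to verify that $((z_i+1)^i)$ actually lies in $\Y_{\z,l}$, equivalently that $z_{i-1}>z_i$. Here I plan the following check: the inequalities $y_i\leq y_{l+1}\leq c$ together with $y_i>z_i$ force $z_i<c$, so if $i-1=l+1$ then $z_{i-1}=c>z_i$ as desired; and for $i-1>l+1$, I would argue by contradiction, using minimality of $i$ to get $y_{i-1}\leq z_{i-1}$, which combined with an assumed equality $z_{i-1}=z_i$ would contradict $y_{i-1}\geq y_i>z_i$.
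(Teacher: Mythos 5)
Your proof is correct and follows essentially the same route as the paper: reduce via (\ref{eq:dec-Ix}) to showing each $\y\in\Y$ dominates a rectangle in $\Y_{\z,l}$, then use Definition~\ref{def:ZX}(2) and minimality of the failing index to produce one. The only cosmetic difference is that you split on $y_{l+1}\geq c+1$ versus $y_{l+1}\leq c$, while the paper splits on whether $\z\geq\y(c)$ holds; both dichotomies lead to the same two rectangles $((c+1)^{l+1})$ and $((z_i+1)^i)$.
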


\begin{proof}
 Following Definition~\ref{def:ZX}, we set $c=z_1$ and note that for every $\y\in\Y$ we either have that $\z\not\geq\y(c)$, or $\z\geq\y(c)$ and $y'_{c+1}\geq l+1$. The latter case implies that $\y\geq((z_1+1)^{l+1})$, so $I_{\y} \subseteq I_{\Y_{\z,l}}$ by (\ref{eq:def-Yzl}) and (\ref{eq:dec-Ix}). In the former case, we must have $\min(y_i,c)>z_i$ for some $i$. Taking the smallest such $i$, we have that $z_{i-1}>z_i$, and by Remark~\ref{rem:z1--zl+1} we get that $i>l+1$. It follows that $\y\geq((z_i+1)^i)$, so $I_{\y} \subseteq I_{\Y_{\z,l}}$ as before.
\end{proof}

To obtain a characterization of the condition $(\z,l)\in\Z(\Y)$, we consider the subset $\Y'_{\z,l}\subset\Y_{\z,l}$ defined by
\begin{equation}\label{eq:def-Y'zl}
 \Y'_{\z,l} = \{ ((z_i+1)^i) : i>l+1\rm{ and }z_{i-1}>z_i\}.
\end{equation}

\begin{lemma}\label{lem:characterize-zl-in-ZY}
  We have that $(\z,l)\in\Z(\Y)$ if and only if $I_{\Y} \subseteq I_{\Y_{\z,l}}$ and $I_{\Y}\not\subseteq I_{\Y'_{\z,l}}$.
\end{lemma}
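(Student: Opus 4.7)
My plan rests on an observation that translates Definition~\ref{def:ZX} into an ideal-theoretic language compatible with $\Y_{\z,l}$ and $\Y'_{\z,l}$: for any $\ul{x}\in\P_n$, I claim that $\ul{x}(c)\leq\z$ if and only if $I_{\ul{x}}\not\subseteq I_{\Y'_{\z,l}}$, i.e., $\ul{x}$ does not dominate any rectangle $((z_i+1)^i)$ with $i>l+1$ and $z_{i-1}>z_i$. The proof of this equivalence mirrors the case analysis in the proof of Lemma~\ref{lem:zl-in-ZY-Yzl}: if $\ul{x}(c)\not\leq\z$, then taking the smallest index $i$ with $\min(x_i,c)>z_i$ yields $i>l+1$ (since $z_1=\cdots=z_{l+1}=c$), $z_{i-1}>z_i$ (by minimality of $i$), and $\ul{x}\geq ((z_i+1)^i)$; conversely, if $\ul{x}\geq((z_i+1)^i)$ with such $i$, then $z_i<z_{i-1}\leq z_1=c$, while $x_i\geq z_i+1>z_i$ and the inequality $\min(x_i,c)\leq z_i$ required by $\ul{x}(c)\leq\z$ both fail simultaneously.

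For the forward direction, I assume $(\z,l)\in\Z(\Y)$. Then Lemma~\ref{lem:zl-in-ZY-Yzl} immediately gives $I_\Y\subseteq I_{\Y_{\z,l}}$. Choosing $\ul{x}\in\Y$ that witnesses Definition~\ref{def:ZX}(1), my observation yields $I_{\ul{x}}\not\subseteq I_{\Y'_{\z,l}}$, which implies $I_\Y\not\subseteq I_{\Y'_{\z,l}}$.

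For the backward direction, I pick $\ul{y}\in\Y$ realizing the non-containment $I_\Y\not\subseteq I_{\Y'_{\z,l}}$, so $\ul{y}$ avoids every rectangle in $\Y'_{\z,l}$. By my observation, $\ul{y}(c)\leq\z$. The containment $I_\Y\subseteq I_{\Y_{\z,l}}$ forces $\ul{y}$ to dominate some element of $\Y_{\z,l}$, and since that element cannot lie in $\Y'_{\z,l}$, it must be $((z_1+1)^{l+1})$, yielding $y'_{c+1}\geq l+1$. The same reasoning applied to any $\ul{x}\in\Y$ with $\ul{x}(c)\leq\z$ gives $x'_{c+1}\geq l+1$, which secures Definition~\ref{def:ZX}(2): whenever $\ul{x}$ satisfies (1), the inequality $x'_{c+1}\leq l+1$ from (1) combines with this lower bound to give equality. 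To finish Definition~\ref{def:ZX}(1), it remains to exhibit some $\ul{x}\in\Y$ with both $\ul{x}(c)\leq\z$ and $x'_{c+1}\leq l+1$; my candidate is the $\ul{y}$ above.

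The main obstacle is the upper bound $y'_{c+1}\leq l+1$ in the backward direction; it cannot be extracted from $\ul{y}$ alone (which only satisfies $\ul{y}(c)\leq\z$ and $y'_{c+1}\geq l+1$) and must come from the design of $\Y_{\z,l}$ --- which deliberately excludes any thicker rectangle $((z_1+1)^{l+k})$ with $k\geq 2$ --- combined with the choice of an $\ul{x}\in\Y$ witnessing $I_\Y\not\subseteq I_{\Y'_{\z,l}}$ whose $x'_{c+1}$ is as small as possible. I expect to extract this bound by a careful interplay of the two hypotheses, possibly by replacing $\ul{y}$ with a different generator of $I_\Y$ and tracing through the precise form of $\Y_{\z,l}\setminus\Y'_{\z,l}$.
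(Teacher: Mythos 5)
Your opening observation---$\ul{x}(c)\leq\z$ if and only if $\ul{x}$ dominates no rectangle in $\Y'_{\z,l}$---is correct, and so is your forward direction; both coincide with the dominance bookkeeping the paper performs inside its own proof. The gap you flag at the end, however, is genuine, and your proposal stops exactly where the missing idea is needed: having picked $\ul{y}\in\Y$ that dominates nothing in $\Y'_{\z,l}$, you know $\ul{y}(c)\leq\z$ and $y'_{c+1}\geq l+1$, but nothing you have derived rules out $y'_{c+1}>l+1$, so $\ul{y}$ has not been shown to satisfy Definition~\ref{def:ZX}(1), and ``replacing $\ul{y}$ with a different generator'' is only a plan.

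The missing step---the route the paper takes---is a sharpening of your observation: a partition $\ul{y}$ dominating nothing in $\Y'_{\z,l}$ in fact satisfies $y_i\leq z_i$ for \emph{every} $i>l+1$, not merely for the jump indices $i$ with $z_{i-1}>z_i$. Since $z_i\leq z_1=c$, this gives $y'_{c+1}\leq l+1$, hence $y'_{c+1}=l+1$, and condition (1) is witnessed by $\ul{y}$ itself. The paper obtains the sharpening by taking the least $i>l+1$ with $y_i>z_i$: if $z_{i-1}>z_i$ then $\ul{y}\geq\bigl((z_i+1)^i\bigr)\in\Y'_{\z,l}$, a contradiction; if $z_{i-1}=z_i$ then $y_{i-1}\geq y_i>z_{i-1}$, contradicting minimality whenever $i-1>l+1$. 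Be aware, though, that this propagation tacitly uses $z_{l+1}>z_{l+2}$ to dispose of the base case $i=l+2$: when $z_{l+1}=z_{l+2}$ the induction stalls, and $y_{l+2}\leq z_{l+2}$ can genuinely fail. For instance with $n=4$, $\z=(2,2,1)$, $l=0$, $\Y=\{(5,3,1)\}$ one has $I_\Y\subseteq I_{\Y_{\z,l}}$ and $I_\Y\not\subseteq I_{\Y'_{\z,l}}$, yet $(\z,0)\notin\Z(\Y)$ (the correct pair for that $\z$ being $(\z,1)$). So in filling your gap you should either verify $z_{l+1}>z_{l+2}$ in the situations where the lemma is invoked, or treat the boundary case $z_{l+1}=z_{l+2}$ separately.
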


\begin{proof}
 Using Lemma~\ref{lem:zl-in-ZY-Yzl}, we may assume that $I_{\Y} \subseteq I_{\Y_{\z,l}}$. Under this hypothesis, we need to check that $(\z,l)\in\Z(\Y)$ if and only if $I_{\Y}\not\subseteq I_{\Y'_{\z,l}}$. We let $c=z_1$ as usual.
 
 Assume first that $(\z,l)\in\Z(\Y)$, so that there exists $\y\in\Y$ with $\z\geq\y(c)$. Suppose by contradiction that $I_{\Y}\subseteq I_{\Y'_{\z,l}}$. It follows from (\ref{eq:def-Y'zl}) that $\y\geq \bigl((z_i+1)^i\bigr)$ for some $i>l+1$ with $z_{i-1}>z_i$, which implies that $y_i>z_i$ and $c\geq z_{i-1}>z_i$. This shows that $\min(y_i,c) > z_i$, contradicting the assumption $\z\geq\y(c)$.

 Assume now $(\z,l)\not\in\Z(\Y)$, and suppose by contradiction that $I_{\Y}\not\subseteq I_{\Y'_{\z,l}}$. There exists then $\y\in\Y$ such that for every $i>l+1$ with $z_{i-1}>z_i$ we have $\y\not\geq\bigl((z_i+1)^i\bigr)$, which implies that $y_i\leq z_i$ for $i>l+1$. Using our standing assumption $z_1=\cdots=z_{l+1}$, it follows that $\z\geq\y(c)$. Moreover, since $I_{\y} \subseteq I_{\Y} \subseteq I_{\Y_{\z,l}}$, we must have $\y\geq\bigl((z_1+1)^{l+1}\bigr)$, that is $y'_{c+1}\geq l+1$. Since $y_i\leq z_i\leq c$ for $i>l+1$, we have in fact that $y'_{c+1}=l+1$, so $\y$ satisfies condition (1) in Definition~\ref{def:ZX}. Since $(\z,l)\not\in\Z(\Y)$, condition (2) must fail, that is, there exists $\x\in\Y$ with $\x(c)\leq\z$ and $x'_{c+1}\leq l$. This contradicts $I_{\Y} \subseteq I_{\Y_{\z,l}}$, since it implies that $I_{\x} \not\subseteq I_{\Y_{\z,l}}$.
\end{proof}

We also record the following direct consequence of \cite[Corollary~2.3]{raicu-reg-coh} and Remark~\ref{rem:lattice-inv-ideals}.

\begin{lemma}\label{lem:Jzl-sub-IY}
 There exists a $\G$-equivariant inclusion of $S$-modules $J_{\z,l}\subseteq S/I_{\Y}$ if and only if $I_{\Y_{\z,l}}\supseteq I_{\Y}\supseteq I_{\mf{succ}(\z,l)}$. Moreover, such an inclusion is uniquely defined up to a scalar, and gives rise to an exact sequence
  \begin{equation}\label{eq:ses-Jzl-Yzl}
  0\lra J_{\z,l} \lra \frac{S}{I_{\Y_{\z,l}}} \lra \frac{S}{I_{\z}+I_{\Y_{\z,l}}} \lra 0.
  \end{equation}
\end{lemma}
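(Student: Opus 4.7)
The plan is to exploit the lattice isomorphism of Remark~\ref{rem:lattice-inv-ideals} to transport \cite[Corollary~2.3]{raicu-reg-coh} from the determinantal setting, where the modules $J_{\z,l}$ and the ideals $I_{\mf{succ}(\z,l)}$, $I_{\Y_{\z,l}}$ all have direct analogues. I would also give a direct argument in parallel, since the combinatorics translate cleanly.

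First I would reinterpret a $\G$-equivariant $S$-module map $J_{\z,l} \to S/I_{\Y}$ as a $\G$-equivariant $S$-module map $I_{\z} \to S/I_{\Y}$ whose kernel contains $I_{\mf{succ}(\z,l)}$. The multiplicity-freeness of $S = \bigoplus_{\x}S_\x$ together with $\Hom_{\G}(S_\z,S_\z)=\kk$ forces such a map to be unique up to scalar and determined by its restriction to $S_\z\subset I_\z$: the image is $(I_\z+I_\Y)/I_\Y$ and the kernel is $I_\z\cap I_\Y$, which by (\ref{eq:IXcapIY}) equals $\sum_{\y\in\Y}I_{\sup(\z,\y)}$. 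The factorization through $J_{\z,l}$ is equivalent to $I_{\mf{succ}(\z,l)}\subseteq I_\Y$, and the injectivity is equivalent to $I_\z\cap I_\Y\subseteq I_{\mf{succ}(\z,l)}$.

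The main obstacle is to verify that this last injectivity condition matches $I_\Y\subseteq I_{\Y_{\z,l}}$. I would show that for any $\y\in\Y$ one has $\sup(\z,\y)\in\mf{succ}(\z,l)$ iff $y_i>z_i$ for some $i>l$, and that this in turn is equivalent to $\y$ dominating some element of $\Y_{\z,l}$. Taking the smallest such $i$, call it $i_0$, and using $z_1=\cdots=z_{l+1}$ from Remark~\ref{rem:z1--zl+1} together with the minimality inequality $y_{i_0-1}\leq z_{i_0-1}$, one obtains $z_{i_0-1}\geq y_{i_0-1}\geq y_{i_0}\geq z_{i_0}+1$, so either $i_0=l+1$ (in which case $\bigl((z_1+1)^{l+1}\bigr)\in\Y_{\z,l}$ is dominated by $\y$) or $i_0>l+1$ with $z_{i_0-1}>z_{i_0}$ (in which case $\bigl((z_{i_0}+1)^{i_0}\bigr)\in\Y_{\z,l}$ is dominated by $\y$). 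The converse is direct from the defining inequalities of the generators in (\ref{eq:def-Yzl}). This combinatorial bookkeeping is what Lemma~\ref{lem:characterize-zl-in-ZY} and \cite[Corollary~2.3]{raicu-reg-coh} package away.

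Finally, once the embedding $J_{\z,l}\hookrightarrow S/I_{\Y_{\z,l}}$ is produced by specializing the first part to $\Y=\Y_{\z,l}$, its image is $(I_\z+I_{\Y_{\z,l}})/I_{\Y_{\z,l}}$, and the short exact sequence (\ref{eq:ses-Jzl-Yzl}) is obtained by forming the cokernel.
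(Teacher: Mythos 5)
Your primary plan, transporting \cite[Corollary~2.3]{raicu-reg-coh} through the lattice isomorphism of Remark~\ref{rem:lattice-inv-ideals}, is precisely the paper's own (one-line) justification for this lemma. Your parallel direct argument is correct as well, and usefully unpacks what the citation encodes: the multiplicity-free structure of $S$ gives uniqueness up to scalar, the factorization and injectivity conditions reduce via $(\ref{eq:IXcapIY})$ to $I_{\mf{succ}(\z,l)}\subseteq I_\Y$ and $I_\z\cap I_\Y\subseteq I_{\mf{succ}(\z,l)}$, and the combinatorial equivalence between ``$y_i>z_i$ for some $i>l$'' and ``$\y\geq\t$ for some $\t\in\Y_{\z,l}$'' (using minimality of $i_0$ together with $z_1=\cdots=z_{l+1}$) is exactly the bookkeeping the paper leaves implicit.
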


\begin{proposition}\label{prop:ZYcupz}
 If there exists a $\G$-equivariant inclusion $J_{\z,l} \subseteq S/I_{\Y}$ and $(\z,l) \in \Z(\Y)$ then
 \[ \Z(\Y \cup \{\z\}) = \Z(\Y) \setminus \{ (\z,l) \}.\]
\end{proposition}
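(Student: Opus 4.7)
The plan is to establish the equality pair-by-pair, via the ideal-theoretic characterization of Lemma~\ref{lem:characterize-zl-in-ZY}, using that $I_{\Y \cup \{\z\}} = I_\Y + I_\z$.

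I would first dispose of $(\z,l)$: setting $c = z_1$, the partition $\z$ satisfies $\z(c) = \z \leq \z$ and $z'_{c+1} = 0$, so it meets condition~(1) of Definition~\ref{def:ZX} for $(\z,l)$ with respect to $\Y \cup \{\z\}$; condition~(2) would then require $0 = l+1$, which is impossible, so $(\z,l) \notin \Z(\Y \cup \{\z\})$.

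For each pair $(\w,m) \neq (\z,l)$, let $A(\w,m,\X) = \{\y\in\X : \y(w_1) \leq \w,\ y'_{w_1+1} \leq m+1\}$; the sets $A(\w,m,\Y)$ and $A(\w,m,\Y\cup\{\z\})$ differ at most by $\z$. If $\z \notin A(\w,m,\Y\cup\{\z\})$, the two $\Z$-memberships trivially coincide, so assume $\z \in A(\w,m,\Y\cup\{\z\})$. The desired equivalence then reduces to two sub-claims: (i) $(\w,m)\in\Z(\Y)$ implies $z'_{w_1+1} = m+1$, and (ii) $(\w,m)\in\Z(\Y\cup\{\z\})$ implies $I_\Y \not\subseteq I_{\Y'_{\w,m}}$ (whence $(\w,m)\in\Z(\Y)$ by Lemma~\ref{lem:characterize-zl-in-ZY}). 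I would prove~(ii) by contradiction: if $I_\Y \subseteq I_{\Y'_{\w,m}}$, then the witness $\y^\star\in\Y$ for $(\z,l)\in\Z(\Y)$ dominates some $((w_{j_0}+1)^{j_0})$ with $j_0 > m+1$ and $w_{j_0-1} > w_{j_0}$, while $\z \geq ((w_1+1)^{m+1})$ (from $I_\z \subseteq I_{\Y_{\w,m}}$ but $I_\z \not\subseteq I_{\Y'_{\w,m}}$) and $\z$ dominates no element of $\Y'_{\w,m}$; evaluating $\y^\star(z_1) \leq \z$ at row $j_0$ forces $z_1 = z_{j_0} \leq w_{j_0} < w_1$, contradicting $z_{m+1} > w_1$.

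For~(i) I would argue by contradiction, assuming $p := z'_{w_1+1} \leq m$. The uniformity $z_1 = \cdots = z_{l+1}$ from Remark~\ref{rem:z1--zl+1} forces either $p \geq l+1$ (so $l < m$, immediately refuted by testing $((z_1+1)^{l+1}) \in \mf{succ}(\z,l) \subseteq I_\Y \subseteq I_{\Y_{\w,m}}$) or $p = 0$ (so $\z \leq \w$ in the partition order). In the latter case, I would case-split on whether $w_i > z_i$ for some $i > l$: if yes, then $\w \in \mf{succ}(\z,l) \subseteq I_\Y$ yields some $\y\in\Y$ with $\y \leq \w$ and $y'_{w_1+1} = 0$ inside $A(\w,m,\Y)$, contradicting condition~(2) of Definition~\ref{def:ZX}. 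If no (so $w_i = z_i$ for all $i > l$), the sub-cases $z_1 = w_1$ and $z_1 < w_1$ are treated separately: in the former, combining with $\z \neq \w$ (inherited from $(\w,m) \neq (\z,l)$ and the uniqueness of $l$ given $\z$) and the uniformities $w_1 = \cdots = w_{m+1}$, $z_1 = \cdots = z_{l+1}$ forces $\z = \w$, a contradiction; in the latter, necessarily $l > m$, and I would use the witness $\y^w\in\Y$ for $(\w,m)\in\Z(\Y)$ to verify $\y^w \in A(\z,l,\Y)$ and hence $(y^w)'_{z_1+1} = l+1$, whereupon $y^w_{l+1} > z_1 = w_{l+1}$ contradicts $\y^w(w_1) \leq \w$ at row $l+1$. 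The intricate bookkeeping in this final sub-regime is the main technical obstacle.
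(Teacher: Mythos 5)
Your overall plan is sound, and it is close in substance to the paper's own argument: both dispose of $(\z,l)$ the same way, both exploit the containment $I_{\mf{succ}(\z,l)}\subseteq I_{\Y}$ coming from the hypothesis $J_{\z,l}\subseteq S/I_{\Y}$, and your case split on $p=z'_{w_1+1}$ mirrors the paper's split on $d<c$ versus $d\geq c$ (with $d=y_1$, $c=z_1$). The genuine organizational difference is that you route the inclusion $\Z(\Y\cup\{\z\})\subseteq\Z(\Y)$ through Lemma~\ref{lem:characterize-zl-in-ZY}, whereas the paper works directly with the failure of condition~(1) of Definition~\ref{def:ZX} for $\Z(\Y)$; these are equivalent, so the content coincides. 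Your reduction to sub-claims (i) and (ii), governed by whether $\z\in A(\w,m,\Y\cup\{\z\})$, is a clean way of packaging the argument, and (ii) is correct as written.

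There is, however, a concrete gap in (i), sub-case $p\geq l+1$: the partition $\bigl((z_1+1)^{l+1}\bigr)$ need not lie in $\mf{succ}(\z,l)$. Membership in $\mf{succ}(\z,l)$ requires $\bigl((z_1+1)^{l+1}\bigr)\geq\z$ coordinate-wise, which forces $z_i=0$ for $i>l+1$; in general $\z$ has more nonzero parts. For instance, with $\z=(3,3,1)$ and $l=1$, one has $(4,4)\not\geq(3,3,1)$, so $(4,4)\notin\mf{succ}((3,3,1),1)$ and the chain you test collapses at its first link. The step is fixable: replace $\bigl((z_1+1)^{l+1}\bigr)$ by the partition $\x$ with $x_i=z_1+1$ for $i\leq l+1$ and $x_i=z_i$ for $i>l+1$ (that is, $\sup\bigl(\z,((z_1+1)^{l+1})\bigr)$ in the notation of \eqref{eq:supxy}), which does belong to $\mf{succ}(\z,l)$. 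One then checks, using $p\leq m$ (so $z_{m+1}\leq w_1$) together with $\z(w_1)\leq\w$ (so $z_i\leq w_i$ for $i>m$), that $\x$ dominates no element of $\Y_{\w,m}$, contradicting $e^{\x}\in I_{\mf{succ}(\z,l)}\subseteq I_{\Y}\subseteq I_{\Y_{\w,m}}$. With this repair the argument for (i) goes through, and the proof is complete.
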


Before going into the details of the proof, it may be worthwhile to analyze one example.

\begin{example}\label{ex:ZYcupz}
 Let $n=3$ and $\Y = \{(2,1,1),(4,2)\}$, which was denoted $\X$ in Example~\ref{ex:reg-pdim}, and recall that
 \[ \Z(\Y) =  \left\{ \left(\emptyset,1\right),\left(\ydiagram{1,1},1\right),\left(\ydiagram{1,1,1},0\right),\left(\ydiagram{2,2},0\right),\left(\ydiagram{3,2},0\right),\left(\ydiagram{3,3},0\right)\right\}.\]
Consider the pair $(\z,l) = \left(\ydiagram{1,1,1},0\right)$. We have that $\Y_{\z,l} = \{\ydiagram{2}\}$ and $I_{\mf{succ}(\z,l)} = I_{(2,1,1)}$, so the conditions in Lemma~\ref{lem:Jzl-sub-IY} are satisfied, showing that $J_{\z,l}\subseteq S/I_{\Y}$. One can check using Definition~\ref{def:ZX} that
\[ \Z(\Y \cup \{\z\}) = \left\{ \left(\emptyset,1\right),\left(\ydiagram{1,1},1\right),\left(\ydiagram{2,2},0\right),\left(\ydiagram{3,2},0\right),\left(\ydiagram{3,3},0\right)\right\} = \Z(\Y) \setminus \{ (\z,l) \},\]
confirming Proposition~\ref{prop:ZYcupz} in this special case. The reader can check that a similar conclusion holds when $(\z,l)=\left(\ydiagram{3,3},0\right)$, in which case $\Y_{\z,l} = \left\{\ydiagram{4},\ydiagram{1,1,1}\right\}$ and $I_{\mf{succ}(\z,l)} = I_{(4,3)} + I_{(3,3,1)}$.

Suppose now that $(\z,l)=\left(\ydiagram{2,2},0\right)$, so that $\Y_{\z,l} = \left\{\ydiagram{3},\ydiagram{1,1,1}\right\}$ and $I_{\mf{succ}(\z,l)} = I_{(3,2)} + I_{(2,2,1)}$. The inclusion $I_{\Y_{\z,l}}\supseteq I_{\Y}$ still holds, but $I_{\Y}\supseteq I_{\mf{succ}(\z,l)}$ fails, so $J_{\z,l}\not\subseteq S/I_{\Y}$ by Lemma~\ref{lem:Jzl-sub-IY}. One can check that
\[ \Z(\Y \cup \{\z\}) = \left\{ \left(\emptyset,1\right),\left(\ydiagram{1,1},1\right),\left(\ydiagram{1,1,1},0\right) \right\},\]
which is strictly smaller than $\Z(\Y) \setminus \{ (\z,l) \}$.
\end{example}

\begin{proof}[Proof of Proposition~\ref{prop:ZYcupz}]
 We set $\X = \Y \cup \{\z\}$ and note that $(\z,l)\not\in\Z(\X)$: indeed, if we take $\x=\z\in\X$ and let $c=z_1$ then $\z\geq\x(c)$ and $x'_{c+1}=0 < l+1$, so condition (2) in Definition~\ref{def:ZX} fails for $(\z,l)$.
 
 We next prove the inclusion $\Z(\X) \subset \Z(\Y)$. We suppose by contradiction that this isn't the case, consider $(\y,u) \in \Z(\X) \setminus \Z(\Y)$, and let $d=y_1$. Since $\Y\subset\X$, it follows that $(\y,u)$ satisfies condition (2) in the definition of $\Z(\Y)$. It must therefore fail condition (1), so there is no $\x\in\Y$ with $\y\geq\x(d)$ and $x'_{d+1}\leq u+1$. Since $\X\setminus\Y = \{\z\}$ and $(\y,u)\in\Z(\X)$, it follows that $\y\geq\z(d)$ and $z'_{d+1}=u+1$. Since $z'_{d+1}$ is non-zero, we conclude by Remark~\ref{rem:z1--zl+1} that $z'_{d+1}\geq l+1$ and $d+1\leq c$, or equivalently, $u\geq l$ and $d<c$. Since $(\z,l)\in\Z(\Y)$, we can find $\x\in\Y$ with $\z\geq\x(c)$ and $x'_{c+1}=l+1$. Since $d<c$, this implies $x'_{d+1}\leq z'_{d+1}=u+1$, and $\y\geq\z(d)\geq\x(d)$, so $(\y,u)$ satisfies condition (1) in the definition of $\Z(\Y)$, which we saw was impossible.
 
 To conclude our proof, we have to check that every $(\y,u)\in\Z(\Y) \setminus \{(\z,l)\}$ belongs to $\Z(\X)$. Since $\Y\subset\X$ and $(\y,u)\in\Z(\Y)$, it follows that $(\y,u)$ satisfies condition (1) in the definition of $\Z(\X)$. We let $d=y_1$ as before. If we assume by contradiction that $(\y,u)\not\in\Z(\X)$, then it must fail condition (2). Since $\X\setminus\Y = \{\z\}$, the only way this can happen is if $\y\geq\z(d)$ and $z'_{d+1}\leq u$. 
 
 Suppose first that $d<c$. Since $(\z,l)\in\Z(\Y)$, we can find $\x\in\Y$ with $\z\geq\x(c)$, so $\y\geq\z(d) \geq \x(d)$, and $x'_{d+1}\leq z'_{d+1}\leq u$. Since $\x\in\X$, this means that $(\y,u)$ fails condition (2) in the definition of $\Z(\X)$, a contradiction.
 
 Suppose now that $d\geq c$, so that $\y\geq\z(d)=\z$. Since $z_1=c$, we also have $\y(c)\geq\z$. If $\y(c) \neq \z$ then there exists $\t\in\mf{succ}(\z,l)$ such that $\y(c)\geq\t$. The assumption $J_{\z,l} \subseteq S/I_{\Y}$ combined with Lemma~\ref{lem:Jzl-sub-IY} implies that $I_{\mf{succ}(\z,l)}\subseteq I_{\Y}$, so there exists $\x\in\Y$ with $\t\geq\x$. Since $d\geq c\geq t_1\geq x_1$, we get $\x=\x(d)$ and thus $x'_{d+1}=0$; moreover, since $\y\geq\t\geq\x$, it follows that $\y\geq\x(d)$, so $(\y,u)$ fails condition (2) in the definition of $\Z(\Y)$, a contradiction. We can therefore assume that $\y(c)=\z$. If $c=d$ then $\y=\z$ and therefore $(\y,u)=(\z,l)$ by \cite[Remark~3.5]{raicu-reg-coh}, so we may assume that $c<d$. If $y'_{c+1}\leq l$, choose any $\x\in \Y$ with $\y\geq\x(d)$ and note that $\z=\y(c)\geq\x(c)$ and $x'_{c+1}\leq y'_{c+1}\leq l$, that is $(\z,l)$ fails property (2) in the definition of $\Z(\Y)$, a contradiction. We may therefore assume that $y'_{c+1}\geq l+1$, so $\y\in\mf{succ}(\z,l)$. Since $I_{\mf{succ}(\z,l)}\subseteq I_{\Y}$, we can then find $\x\in\Y$ with $\y\geq\x\geq\x(d)$ which then satisfies $x'_{d+1}\leq y'_{d+1}=0<u+1$, so $(\y,u)$ fails property (2) in the definition of $\Z(\Y)$, again a contradiction. 
\end{proof}

The following results will be used in Section~\ref{sec:Ext-split-thick}. 

\begin{lemma}\label{lem:yu-in-ZY}
 Suppose that $\Y\subset\P_n$ is a subset with the property that for all $\x\in\Y$ and all $j\geq 0$, either $x'_j=0$ or $x'_j\geq l+1$. For every $(\y,u)\in\Z(\Y)$ we have $u\geq l$.
\end{lemma}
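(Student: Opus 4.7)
The plan is to unpack Definition~\ref{def:ZX} applied to $(\y,u)\in\Z(\Y)$ and feed the conclusion into the hypothesis on $\Y$. Let $d=y_1$. By condition~(1) of the definition, there exists some $\x\in\Y$ with $\x(d)\leq\y$ and $x'_{d+1}\leq u+1$. By condition~(2), every such witness $\x$ actually satisfies the equality $x'_{d+1}=u+1$.

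Since $u\geq 0$ we have $x'_{d+1}=u+1\geq 1$, so $x'_{d+1}\neq 0$. Taking $j=d+1$ in the hypothesis imposed on $\Y$, the dichotomy forces $x'_{d+1}\geq l+1$. Combining this with $x'_{d+1}=u+1$ yields $u+1\geq l+1$, that is, $u\geq l$, as required.

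No obstacle is anticipated: the lemma is essentially a direct reading of condition~(2) in Definition~\ref{def:ZX} against the hypothesis that the conjugate partitions of members of $\Y$ have all their nonzero column lengths at least $l+1$. The only subtlety is to remember to extract the witness $\x$ from condition~(1) before invoking the equality in~(2), since without the existence of at least one witness the equality clause is vacuous.
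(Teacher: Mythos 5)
Your proof is correct and follows the same route as the paper: use condition (1) of Definition~\ref{def:ZX} to produce a witness $\x\in\Y$, upgrade to $x'_{d+1}=u+1$ via condition (2), and then apply the hypothesis on $\Y$ at $j=d+1$ to conclude $u+1\geq l+1$. The only difference is cosmetic: the paper compresses the first two steps into one sentence, while you spell out the need for (1) before invoking (2), which is a fair clarification.
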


\begin{proof}
 Suppose that $(\y,u)\in\Z(\Y)$ and let $d=y_1$. It follows from condition (2) in Definition~\ref{def:ZX} that there exists an element $\x\in\Y$ with $x'_{d+1} = u+1$. Taking $j=d+1$, we have that $x'_j\neq 0$, so $u+1=x'_j \geq l+1$. This proves that $u\geq l$, as desired.
\end{proof}

\begin{corollary}\label{cor:conn-hom-Yzl-0}
 If $(\y,u)\in\Z(\Y_{\z,l} \cup \{\z\})$ then $u\geq l$ (and in particular $u\neq l-1$).
\end{corollary}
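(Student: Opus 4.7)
The plan is to derive this corollary as a direct application of Lemma~\ref{lem:yu-in-ZY} with the set $\Y = \Y_{\z,l} \cup \{\z\}$. To invoke that lemma, we must verify its hypothesis: for every partition $\x$ in $\Y_{\z,l} \cup \{\z\}$ and every $j\geq 0$, either $x'_j=0$ or $x'_j\geq l+1$.

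First, I would check the hypothesis for $\x=\z$. Under the standing assumption $z_1=\cdots=z_{l+1}$, Remark~\ref{rem:z1--zl+1} gives $z'_j\geq l+1$ for all $j\leq z_1$, while trivially $z'_j=0$ for $j>z_1$. So $\z$ satisfies the required dichotomy.

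Next, I would check the hypothesis for each partition in $\Y_{\z,l}$. By definition (\ref{eq:def-Yzl}), every such partition is rectangular of one of the forms $((z_1+1)^{l+1})$ or $((z_i+1)^i)$ with $i>l+1$. The conjugate of a rectangle $(a^b)$ has only two possible values for its parts, namely $b$ (in columns $\leq a$) and $0$ (in columns $>a$). In the first form $b=l+1$; in the second $b=i>l+1$. In either case the nonzero conjugate parts are $\geq l+1$, as required.

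With the hypothesis verified, Lemma~\ref{lem:yu-in-ZY} applied to $\Y_{\z,l}\cup\{\z\}$ immediately yields $u\geq l$ for every $(\y,u)\in\Z(\Y_{\z,l}\cup\{\z\})$. There is no real obstacle here: the proof is essentially a bookkeeping verification that the rectangles making up $\Y_{\z,l}$ together with the partition $\z$ all have conjugates whose nonzero parts are $\geq l+1$, after which the previous lemma does the work.
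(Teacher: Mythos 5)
Your proof is correct and takes essentially the same approach as the paper: apply Lemma~\ref{lem:yu-in-ZY} to $\Y_{\z,l}\cup\{\z\}$, checking the hypothesis case by case for $\z$ (via Remark~\ref{rem:z1--zl+1}) and for the rectangular partitions in $\Y_{\z,l}$.
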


\begin{proof}
We apply Lemma~\ref{lem:yu-in-ZY} with $\Y=\Y_{\z,l} \cup \{\z\}$: to check the hypothesis, we choose $\x\in\Y_{\z,l} \cup \{\z\}$. If $\x=\z$ then we have by Remark~\ref{rem:z1--zl+1} that $x'_{j}\geq l+1$ whenever $x'_j\neq 0$. If $\x=((z_1+1)^{l+1})$ then $x'_j=l+1$ whenever $x'_j\neq 0$. If $\x=((z_i+1)^i)$ then $x'_j=i>l+1$ whenever $x'_j\neq 0$.
\end{proof}

Recall the definition of the \defi{saturation} of an ideal $I$ with respect to $J$,
\begin{equation}\label{eq:def-saturation}
I:J^{\infty} = \{f\in S: f\cdot J^d \subseteq I\textrm{ for }d\gg 0\}.
\end{equation}
When $I=I_{\X}$ and $J=I_p$ the saturation can be described concretely as follows. For $\X\subset\P_n$ we define 
\begin{equation}\label{eq:p-saturation}
 \X^{:p} = \{\x(c): \x\in\X,c\in\bb{Z}_{\geq 0}, x'_c> p\rm{ if }c>0,\rm{ and }x'_{c+1}\leq p\}
\end{equation}
In terms of Young diagrams, we can think of $\X^{:p}$ as being obtained from $\X$ by removing from each $\x\in\X$ the columns of size $\leq p$. Using \cite[Lemma~2.3]{raicu-reg-coh} and its proof, we obtain
\begin{equation}\label{eq:saturation}
I_{\X}:I_p^{\infty} = I_{\X^{:p}}.
\end{equation}
In analogy with \cite[Corollary~2.4]{raicu-reg-coh}, we also have that
\begin{equation}\label{eq:Ann-Jzl}
 \Ann(J_{\z,l}) = I_{l+1},
\end{equation}
that is, the scheme-theoretic support of $J_{\z,l}$ is the union of the coordinate planes of dimension~$l$ in $\kk^n$ (see also Example~\ref{ex:J-empty-l}, and Proposition~\ref{prop:decomp-Jzl} below).

\subsection{$\Ext$ modules for the subquotients $J_{\z,l}$}\label{subsec:subquots-Jzl}

The goal of this section is to prove that $J_{\z,l}$ is Cohen--Macaulay, to compute its regularity and projective dimension, and to describe the modules $\Ext^j_S(J_{\z,l},S)$. We will obtain this from a natural decomposition of $J_{\z,l}$ into a direct sum of cyclic modules, which we describe next. If $r\geq 1$, we write $[r]$ for the set $\{1,\cdots,r\}$. Given $\LL\subset [n]$ we consider the cyclic module
\[ S_{\LL} = \kk[e_i:i\in\LL] \simeq \frac{S}{\langle e_i:i\notin\LL\rangle},\]
where we abuse notation and write $e_i$ for the equivalence class of $e_i\in S$ in the quotient $S_{\LL}$. We define the ideals $I_p^{\LL}\subseteq S_{\LL}$ as the image of the square-free ideals $I_p\subseteq S$ via the quotient map, for $p=1,\cdots,|\LL|$. We write $\S_{\LL}$ for the subgroup of $\S_n$ consisting of permutations $\s$ that fix every element $i\in[n]\setminus\LL$, and note that $\S_{\LL}$ is isomorphic to the group of permutations of the set $\LL$.

We will be working with finitely generated $\bb{Z}^n$-graded $S$-modules $M$, and for $\u\in\bb{Z}^n$ we will write $M_{\u}$ for the \defi{$\u$-graded} (or \defi{$\u$-isotypic}) component of $M$, which is a finite dimensional vector space: we call its dimension $\dim(M_{\u})$ the \defi{multiplicity} of $\u$ in $M$. If $\v\in\bb{Z}^n$, we let $M(-\v)$ denote the \defi{shifted} $\bb{Z}^n$-graded $S$-module with $M(-\v)_{\u} = M_{-\v+\u}$. In order to be able to refer to elements of shifted modules, it will often be convenient to write $S(-\v) = S\cdot E_{\v}$, where $E_{\v}$ is a generator of the free module $S(-\v)$, and thus it has degree $\deg(E_{\v})=\v$. Using the identification $M(-\v) = M \oo_S S(-\v)$, we will write more generally $M(-\v) = M\cdot E_{\v}$, so that if $m\in M$ has degree $\u$, then $m\cdot E_{\v}\in M(-\v)$ has degree $\u+\v$.

We write $\mc{O}(\z)$ for the orbit of the $\S_n$-action on $\bb{Z}^n$ of some element $\z$. We set $z_{n+1}=-\infty$ and consider
\[ 0\leq l<p\leq n,\mbox{ and a partition }\z\in\P_n\mbox{ with }z_1=\cdots=z_p>z_{p+1}.\]
We let $c=z_1$, and for each $\u\in\mc{O}(\z)$ we define the set
\[ \LL_{\u} = \{ j\in[n] : u_j = c\}.\]

\begin{proposition}\label{prop:decomp-Jzl}
 With the notation above, we have an isomorphism of $G$-equivariant $S$-modules
 \[ J_{\z,l} \simeq \bigoplus_{\u\in\mc{O}(\z)} \frac{S_{\LL_{\u}}}{I_{l+1}^{\LL_{\u}}} \cdot E_{\u}.\]
\end{proposition}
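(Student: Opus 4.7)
The plan is to construct an explicit $\G$-equivariant $S$-module map
\[
\phi \colon \bigoplus_{\u \in \mc{O}(\z)} \frac{S_{\LL_{\u}}}{I_{l+1}^{\LL_{\u}}}\cdot E_{\u} \longrightarrow J_{\z,l}
\]
by sending each generator $E_{\u}$ to the class of $e^{\u}$ in $J_{\z,l}$, and then to verify that $\phi$ is an isomorphism by matching monomial $\kk$-bases on both sides.

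Well-definedness of $\phi$ amounts to two checks: for each $\u\in\mc{O}(\z)$, (i) $e_i\cdot e^{\u}\in I_{\mf{succ}(\z,l)}$ whenever $i\notin\LL_{\u}$, and (ii) $e_{i_1}\cdots e_{i_{l+1}}\cdot e^{\u}\in I_{\mf{succ}(\z,l)}$ for any distinct $i_1,\ldots,i_{l+1}\in\LL_{\u}$. For (i), since $u_i<c$, the tuple $\u+e_i$ sorts to a partition obtained from $\z$ by incrementing an entry in some row $r>p$ (possibly after re-sorting inside an equal block); because $p>l$, such a partition lies in $\mf{succ}(\z,l)$. For (ii), the product is divisible by $e^{\v}$ with $v_{i_k}=c+1$ and $v_j=u_j$ otherwise, whose sorted form is $\bigl((c+1)^{l+1},c^{p-l-1},z_{p+1},\ldots,z_n\bigr)$, an element of $\mf{succ}(\z,l)$ since its $(l+1)$-st coordinate is $c+1>c=z_{l+1}$. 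Both are routine sorting computations using $l<p$.

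Next I would match monomial bases. A $\kk$-basis of the right-hand side is given by elements $e^{\v}\cdot E_{\u}$ indexed by pairs $(\u,\v)$ with $\u\in\mc{O}(\z)$, $\op{supp}(\v)\subseteq\LL_{\u}$, and $|\op{supp}(\v)|\leq l$; a $\kk$-basis of $J_{\z,l}$ is given by the classes of monomials $e^{\t}$ with $e^{\t}\in I_{\z}\setminus I_{\mf{succ}(\z,l)}$. The claim is that the assignment $(\u,\v)\mapsto \t=\u+\v$ is a bijection between these indexing sets; granting this, $\phi$ is a bijection on bases, hence an isomorphism of graded $S$-modules. That each such $\u+\v$ really lies in $I_{\z}\setminus I_{\mf{succ}(\z,l)}$ follows by sorting $\u+\v$ exactly as in step (ii) and observing that no coordinate past row $l$ strictly exceeds the corresponding coordinate of $\z$. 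The substance is in the reverse direction: given $e^{\t}$ in the basis of $J_{\z,l}$, the key combinatorial assertion is that $\op{sort}(\t)_j=z_j$ for every $j>p$, equivalently, exactly $p$ coordinates of $\t$ are $\geq c$. Any violation of this produces, via the usual ``bump-up'' trick (replacing the offending sorted coordinate by $z_j+1$), a partition $\mu\in\mf{succ}(\z,l)$ with $\op{sort}(\t)\geq\mu$, contradicting $e^{\t}\notin I_{\mf{succ}(\z,l)}$. Once this is in hand, setting $\LL_{\u}=\{j:t_j\geq c\}$, then $u_j=c$ for $j\in\LL_{\u}$ and $u_j=t_j$ otherwise, uniquely recovers $\u\in\mc{O}(\z)$ and $\v=\t-\u$; the bound $|\op{supp}(\v)|\leq l$ is the same sorting argument applied to the set of excess positions.

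Finally, $\G$-equivariance of $\phi$ is immediate from the construction: the torus $(\kk^\times)^n$ acts diagonally and $\s\in\S_n$ takes $E_{\u}$ to $E_{\s(\u)}$ with $\LL_{\s(\u)}=\s(\LL_{\u})$, so the summands are permuted compatibly. I expect the main obstacle to be the combinatorial fact $\op{sort}(\t)_j=z_j$ for $j>p$, which requires a careful case analysis of sorting after adding boxes in the prescribed rows; everything else follows mechanically from the explicit formula for $\phi$.
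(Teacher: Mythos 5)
Your proof is correct and mirrors the paper's: both construct the $\G$-equivariant surjection sending $E_{\u}$ to the class of $e^{\u}$ in $J_{\z,l}$, then verify that it is an isomorphism by a combinatorial count. The only difference is presentational: the paper uses $\S_n$-equivariance to reduce both the well-definedness check (to $\u=\z$) and the injectivity check (to comparing $\x$-isotypic multiplicities for dominant $\x$), whereas you match the full monomial bases directly; the underlying sorting observations — exactly $p$ coordinates of $\t$ are $\geq c$ and at most $l$ of them strictly exceed $c$, else the sorted tuple lies in $\mf{succ}(\z,l)$ — are the same in both arguments.
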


\begin{proof} Since $J_{\z,l}$ is a quotient of $I_{\z}$, it follows that we have a natural $G$-equivariant surjection
\[ \pi : \bigoplus_{\u\in\mc{O}(\z)} S\cdot E_{\u} \lra J_{\z,l},\]
sending $E_{\u}$ to the residue class of the monomial $e^{\u}$. We have that $\ker(\pi)$ contains $e_1\cdots e_{l+1} \cdot E_{\z}$, as well as $e_i\cdot E_{\z}$ for all $i=p+1,\cdots,n$, since the monomials $e_1\cdots e_{l+1}\cdot e^{\z}$ and $e_i\cdot e^{\z}$ are in $I_{\mf{succ}(\z,l)}$. Using the fact that $e^{\z}$ is preserved by permutations $\s\in\S_{[p]}$, we find that $e^{\z} \cdot (I_{l+1} + \langle e_{p+1},\cdots,e_n\rangle)$ is contained in $\ker(\pi)$. Using the $\S_n$-action we conclude that $\pi$ induces a surjection
\begin{equation}\label{eq:onto-Jzl}
 \bigoplus_{\u\in\mc{O}(\z)} \frac{S_{\LL_{\u}}}{I_{l+1}^{\LL_{\u}}} \cdot E_{\u}\lra J_{\z,l}.
\end{equation}
To prove that this is an isomorphism, it suffices by $\S_n$-equivariance to check that for every dominant $\x$, the $\x$-isotypic components in the source and target have the same multiplicity. Using (\ref{eq:defSucc})--(\ref{eq:defJzl}), the $\x$-isoptypic component of $J_{\z,l}$ is non-zero precisely when
\begin{equation}\label{eq:rel-x-to-z}
x_i = z_i\mbox{ for }i\geq l+1,\mbox{ and }x_i\geq c\mbox{ for }i=1,\cdots,l,
\end{equation}
in which case it has multiplicity one. Suppose now that the $\x$-isotypic component of the source of (\ref{eq:onto-Jzl}) is non-zero, so that we can write $\x =\u + \v$ for some $\u\in\mc{O}(\z)$ and $\v$ such that the $\v$-isotypic component of $S_{\LL_{\u}}/I_{l+1}^{\LL_{\u}}$ is non-zero. Since $u_i=c$ if and only if $i\in\LL_{\u}$, and $v_i=0$ for $i\not\in\LL_{\u}$, we conclude that
\[ x_i=u_i+v_i \geq c \mbox{ for }i\in\LL_{\u},\mbox{ and }x_i=u_i+v_i=u_i < c \mbox{ for }i\not\in\LL_{\u}.\]
Since $\x$ is dominant, we conclude that $\LL_{\u} = [p]$ and that $\u,\v$ must be dominant as well. This implies further that $\u=\z$, and that $\v$ is a dominant weight of $S/(I_{l+1} + \langle e_{p+1},\cdots,e_n\rangle)$. We conclude that at most $l$ of the entries of $\v$ are non-zero, so that $v_i=0$ for $i\geq l+1$. Therefore, $\x=\u+\v$ satisfies (\ref{eq:rel-x-to-z}) and has multiplicity one in the representation on the left of (\ref{eq:onto-Jzl}), proving that (\ref{eq:onto-Jzl}) is an isomorphism.
\end{proof}

We introduce one more piece of notation: for a finitely generated $\bb{Z}^n$-graded $S$-module $M$, we define its \defi{character $[M]$} to be the Laurent power series
\[ [M] = \sum_{\u\in\bb{Z}^n} \dim(M_{\u}) \cdot e^{\u} \in \bb{Z}((e_1,\cdots,e_n)).\]
Note the abuse of notation where we use the same symbols $e_i$ as for the variables in $S$, but this shouldn't cause any confusion, but rather make the notation more intuitive! Note also that we have 
\[[S]=\sum_{\u\in\bb{Z}^n_{\geq 0}} e^{\u},\quad\mbox{ and }\quad[M(-\v)] = [M\cdot E_{\v}] = [M]\cdot e^{\v}.\]

Using Proposition~\ref{prop:decomp-Jzl}, we can now describe the $\Ext$ modules of the subquotients $J_{\z,l}$. We start with the special case when $\z=\emptyset$, which corresponds to $J_{\z,l} = S/I_{l+1}$ (see Example~\ref{ex:J-empty-l}). For a tuple $\t\in\bb{Z}^n_{\geq 0}$ we write $\op{supp}(\t)=\{i\in[n]: t_i\neq 0\}$ for the support of $\t$, and let $p_{\t}=|\op{supp}(\t)|$ denote its cardinality. 

\begin{lemma}\label{lem:char-J0l}
We have that $\Ext^j(S/I_{l+1},S) = 0$ for $j\neq n-l$ and
\[\left[\Ext^{n-l}_S(S/I_{l+1},S)\right] = \sum_{\substack{\t\in\bb{Z}^n_{\geq 0} \\ p_{\t} \leq l}} {n-1-p_{\t} \choose l - p_{\t}} \cdot e^{\t-(1^n)}.\]
\end{lemma}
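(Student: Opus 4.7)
The proof splits naturally into showing $\Ext^j(S/I_{l+1},S) = 0$ for $j \neq n-l$ and computing the character in the surviving degree. The plan is to pass to local cohomology via local duality, where both steps become transparent using standard combinatorial descriptions of Stanley--Reisner rings.

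I would begin by recognizing $S/I_{l+1}$ as the Stanley--Reisner ring of the $(l-1)$-skeleton $\Delta$ of the simplex on $[n]$, whose faces are precisely the subsets of $[n]$ of cardinality at most $l$. Being pure shellable (equivalently, the independence complex of the uniform matroid of rank $l$), $\Delta$ is Cohen--Macaulay, so $S/I_{l+1}$ is a Cohen--Macaulay ring of Krull dimension $l$. Graded local duality for $S$, with canonical module $\om_S = S(-\ul{1})$ where $\ul{1}=(1,\ldots,1)$, yields the graded isomorphism
\[\left[\Ext^{n-i}_S(S/I_{l+1},S)\right]_{\ul{v}} \cong H^i_{\m}(S/I_{l+1})_{-\ul{v}-\ul{1}}^{\vee},\]
and the Cohen--Macaulay property immediately forces the desired vanishing for $j \neq n-l$.

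For the character in degree $j = n-l$, I would apply Hochster's formula: $H^l_{\m}(S/I_{l+1})_{\ul{a}}$ vanishes unless $\ul{a}\leq 0$, and writing $\ul{a} = -\t$ with $\t\in\bb{Z}^n_{\geq 0}$ and $F = \op{supp}(\t)$ of size $p_{\t}$, we have
\[\dim_{\kk} H^l_{\m}(S/I_{l+1})_{-\t} \;=\; \dim_{\kk}\tilde{H}_{l-p_{\t}-1}\bigl(\op{link}_{\Delta}(F);\kk\bigr),\]
provided $F\in\Delta$, i.e.\ $p_{\t}\leq l$. The link $\op{link}_{\Delta}(F)$ is the $(l-p_{\t}-1)$-skeleton of the simplex on the $(n-p_{\t})$-element set $[n]\setminus F$; the reduced homology of the $r$-skeleton of an $(m-1)$-simplex is concentrated in top degree $r$, where it has dimension $\binom{m-1}{r+1}$ (a one-line Euler-characteristic computation from the contractibility of the full simplex). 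Substituting $r = l-p_{\t}-1$ and $m = n-p_{\t}$ yields $\binom{n-1-p_{\t}}{l-p_{\t}}$.

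Finally, translating back along $\ul{v} = \t - \ul{1}$ produces precisely the asserted character. The argument is essentially bookkeeping, with no substantive obstacle; the points requiring care are (i) correctly accounting for the $\ul{1}$-shift coming from $\om_S$, which is what places contributions in the multidegrees $\t - \ul{1}$, and (ii) observing that Hochster's formula assigns the \emph{same} dimension to every multidegree $-\t$ sharing a common support $F$, which is what produces the infinite sum over $\t\in\bb{Z}^n_{\geq 0}$ with $\op{supp}(\t)=F$.
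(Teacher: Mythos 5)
Your proof is correct and takes a genuinely different route from the paper's. Both arguments agree on the first step: $S/I_{l+1}$ is the Stanley--Reisner ring of the $(l-1)$-skeleton of the simplex, which is Cohen--Macaulay of dimension $l$, and this gives the vanishing of $\Ext^j$ for $j\neq n-l$. For the character computation, however, the paper invokes the theory of \emph{square-free modules}: it cites Yanagawa and Musta\c{t}\u{a} to conclude that $\Ext^{n-l}_S(S/I_{l+1},\omega_S)$ is a square-free module (so its multiplicities depend only on the support of $\t$), then applies Yanagawa's Alexander duality theorem to translate the question into computing the graded Betti numbers $\Tor^S_{l-p_{\t}}(I_{n-l},S)_{(1^n)-\t}$, and finally reads off the answer ${n-1-p_{\t}\choose l-p_{\t}}$ from Galetto's explicit equivariant resolution of $I_{n-l}$ together with the Hook Length Formula. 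Your proof bypasses Alexander duality entirely: after local duality you apply Hochster's formula directly to $H^l_{\m}(S/I_{l+1})$, identify $\op{link}_{\Delta}(F)$ as the $(l-p_{\t}-1)$-skeleton of a simplex on $n-p_{\t}$ vertices, and compute its top reduced homology dimension by Euler characteristic. This is more self-contained and elementary (no appeal to the Betti numbers of $I_{n-l}$ or to the representation theory in Galetto), at the modest cost of needing the standard concentration-in-top-degree fact for skeleta of simplices; the paper's route is more modular, trading that calculation for citations of existing results. One small remark: the fact that multidegrees with common support contribute equally, which the paper extracts from square-freeness of the Ext module, falls out for free from Hochster's formula in your approach, since that formula depends on $-\t$ only through $\op{supp}(\t)$.
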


\begin{proof} The vanishing of $\Ext^j(S/I_{l+1},S)$ for $j\neq n-l$ follows from the fact that $S/I_{l+1}$ is Cohen--Macaulay of dimension $l$. If we write $\omega_S$ for the canonical module of $S$ then $\omega_S \simeq S\cdot E_{(1^n)}$, so we have to show that
\[\left[\Ext^{n-l}_S(S/I_{l+1},\omega_S)\right] = \sum_{\substack{\t\in\bb{Z}^n_{\geq 0} \\ p_{\t} \leq l}} {n-1-p_{\t} \choose l - p_{\t}} \cdot e^{\t}.\]
It follows from \cite[Theorem~2.6]{yanagawa} or \cite[Theorem~3.3]{mustata} that $\Ext^{n-l}_S(S/I_{l+1},\omega_S)$ is a square-free $S$-module, that is, the multiplicity of any $\t$-isotypic component is non-zero only for $\t\in\bb{Z}^n_{\geq 0}$ and it depends only on $\op{supp}(\t)$. We may then assume that $\t$ has entries $0$ and $1$, with $p_{\t}$ of them equal to $1$. Using the fact that the Alexander dual of $I_{l+1}$ is $I_{n-l}$, it follows from \cite[Theorem~3.4]{yanagawa} that
\[ \dim\left(\Ext^{n-l}_S(S/I_{l+1},\omega_S)_{\t}\right) = \dim \left(\Tor^S_{l-p_{\t}}(I_{n-l},S)_{(1^n)-\t}\right),\]
and the right hand side is equal to ${n-1-p_{\t} \choose l - p_{\t}}$: this follows for instance from \cite[Theorem~4.11]{galetto} and the fact that the number of standard Young tableaux of hook-shape $(n-l,1^{l-p_{\t}})$ with entries in $[n]\setminus\op{supp}(\t)$ is equal by the Hook Length Formula \cite[Section~4.3]{fulton} to ${n-1-p_{\t} \choose l - p_{\t}}$.
\end{proof}

\begin{corollary}\label{cor:properties-Jzl}
 If $z_1=\cdots=z_{l+1}$ then the module $J_{\z,l}$ is Cohen--Macaulay of projective dimension $n-l$ and regularity $|\z|+l$. Moreover, we have that
 \[ \left[\Ext^{n-l}_S(J_{\z,l},S)\right] = \bigoplus_{\substack{ \u\in\mc{O}_{\z}  \\ \v\in\bb{Z}^{\LL_{\u}}_{\geq 0} \\ p_{\v}\leq l}}  {p-1-p_{\v} \choose l - p_{\v}} \cdot e^{\v-\u-(1^n)}.\]
\end{corollary}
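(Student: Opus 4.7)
The plan is to reduce the calculation to Lemma~\ref{lem:char-J0l} via the decomposition in Proposition~\ref{prop:decomp-Jzl}, combined with a Koszul change-of-rings. Since $\Ext^j_S(-,S)$ commutes with finite direct sums and the twist $(-)\cdot E_{\u}$ multiplies characters by $e^{-\u}$, it suffices to compute $\Ext^j_S(S_{\LL}/I_{l+1}^{\LL},S)$ for each $\LL=\LL_{\u}$ of size $p$, and then sum the resulting contributions over $\u\in\mc{O}(\z)$.

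For fixed $\LL$ of size $p$, the ideal $\m_{\LL}=(e_i:i\notin\LL)$ is generated by a regular sequence of length $n-p$, so its Koszul complex is a minimal $S$-free resolution of $S_{\LL}$. Dualizing gives $\Ext^j_S(S_{\LL},S)=0$ for $j\neq n-p$ and $\Ext^{n-p}_S(S_{\LL},S)\simeq S_{\LL}\bigl(\sum_{i\notin\LL}\ul{e_i}\bigr)$. The standard change-of-rings for modules annihilated by a regular sequence (coming from the double complex built by tensoring the Koszul resolution of $S_\LL$ with an $S_\LL$-free resolution of the module) then yields
\[ \Ext^j_S\bigl(S_\LL/I_{l+1}^\LL,S\bigr)\simeq \Ext^{j-(n-p)}_{S_\LL}\bigl(S_\LL/I_{l+1}^\LL,S_\LL\bigr)\bigl(\textstyle\sum_{i\notin\LL}\ul{e_i}\bigr).\]
Applying Lemma~\ref{lem:char-J0l} to the polynomial ring $S_\LL$ in $p$ variables, the right-hand side vanishes unless $j=n-l$, and in that case has character
\[ \sum_{\substack{\v\in\bb{Z}^{\LL}_{\geq 0}\\ p_\v\leq l}}\binom{p-1-p_\v}{l-p_\v}\cdot e^{\v-\sum_{i\in\LL}\ul{e_i}-\sum_{i\notin\LL}\ul{e_i}}=\sum_\v \binom{p-1-p_\v}{l-p_\v}\cdot e^{\v-(1^n)}. \]
Multiplying by $e^{-\u}$ and summing over $\u\in\mc{O}(\z)$ produces exactly the character formula in the statement.

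The homological claims follow readily. Concentration of $\Ext^j_S(J_{\z,l},S)$ in the single degree $j=n-l$ is precisely the statement that $J_{\z,l}$ is Cohen--Macaulay of projective dimension $n-l$ (equivalently, dimension $l$). For the regularity, each summand $(S_{\LL_\u}/I_{l+1}^{\LL_\u})\cdot E_{\u}$ in the decomposition has regularity $l+|\u|=l+|\z|$: indeed, $S_{\LL_\u}/I_{l+1}^{\LL_\u}$ is the Stanley--Reisner ring of the $(l-1)$-skeleton of the simplex on $\LL_\u$ (a pure shellable complex), so it has regularity $l$ over both $S_{\LL_\u}$ and $S$, and the twist by $E_{\u}$ raises regularity by $|\u|=|\z|$. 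Hence $\reg(J_{\z,l})=|\z|+l$. The only genuinely technical step is the bookkeeping of the grading shift $\sum_{i\notin\LL}\ul{e_i}$ in the change-of-rings formula, but since the relevant regular sequence is literally a subset of the variables, this reduces to an explicit Koszul computation.
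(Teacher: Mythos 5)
Your proposal is correct and follows essentially the same route as the paper: reduce to the decomposition of Proposition~\ref{prop:decomp-Jzl}, use the Koszul change of rings along the regular sequence $\{e_i : i\notin\LL\}$ (which the paper packages as adjunction with the canonical modules $\omega_S, \omega_{S_\LL}$), and feed the result into Lemma~\ref{lem:char-J0l} applied over the smaller polynomial ring in $p$ variables; the bookkeeping of the grading shift $\sum_{i\notin\LL}\ul{e_i}$ is handled identically.

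One small imprecision: the inference ``$(l-1)$-skeleton of the simplex is pure shellable, so it has regularity $l$'' does not follow. Shellability yields Cohen--Macaulayness, not a regularity bound (the full simplex is shellable yet its Stanley--Reisner ring has regularity $0$, not its dimension). The correct justification that $\reg(S_\LL/I_{l+1}^\LL)=l$ is either that the squarefree Veronese ideal $I_{l+1}^\LL$ has a linear resolution (e.g.\ by Eagon--Reiner from the Cohen--Macaulayness of the Alexander dual, or by the paper's own formula~(\ref{eq:reg-pdim-Ix})), or — as the paper does — to read the regularity off the character computed in Lemma~\ref{lem:char-J0l} using~(\ref{eq:regM}): the lowest degree appearing in $\Ext^{p-l}_{S_\LL}(S_\LL/I_{l+1}^\LL,\omega_{S_\LL})$ is at $\v=0$, giving degree $0$, hence regularity $p-(p-l)=l$. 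Since your character formula is already in hand, this second route is the cleaner closing step.
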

 
\begin{proof} Using Proposition~\ref{prop:decomp-Jzl}, $J_{\z,l}$ is a direct sum of Cohen--Macaualy modules of dimension $l$, so it is itself Cohen--Macaualy and moreover the projective dimension is $\dim(S)-l=n-l$. The formula for the character of $\Ext^{n-l}_S(J_{\z,l},S)$ follows from Proposition~\ref{prop:decomp-Jzl} and Lemma~\ref{lem:char-J0l} (applied to $S_{\LL_{\u}}$) by observing that $\LL_{\u}$ is a set of cardinality $p$, so $S_{\LL_{\u}}$ is a polynomial ring of dimension $p$, and 
\[\Ext^{n-l}_S\left(\frac{S_{\LL_{\u}}}{I_{l+1}^{\LL_{\u}}} \cdot E_{\u},\omega_S\right) = \Ext^{n-l}_S\left(\frac{S_{\LL_{\u}}}{I_{l+1}^{\LL_{\u}}},\omega_S\right)  \cdot E_{-\u} = \Ext^{p-l}_{S_{\LL_{\u}}}\left(\frac{S_{\LL_{\u}}}{I_{l+1}^{\LL_{\u}}},\omega_{S_{\LL_{\u}}}\right)  \cdot E_{-\u}.\]
To compute the regularity, we use the fact that for a graded module $M$ we have
\begin{equation}\label{eq:regM}
\reg(M)=\max\{-r-j:\Ext^j_S(M,S)_r\neq 0\}.
\end{equation}
Applying this to $M=J_{\z,l}$ we see that the only cohomological degree $j$ where the $\Ext$-module is non-zero is when $j=n-l$, and the minimal degree $r$ for which $\Ext^{n-l}_S(J_{\z,l},S)_r\neq 0$ is attained when $\v=(0^n)$ and is equal to
\[ r = |\v| - |\u| - n = 0 - |\z| - n,\]
showing that the regularity of $J_{\z,l}$ is $\reg(J_{\z,l}) = |\z|+n-(n-l) = |\z|+l$, as desired.
\end{proof}
 
\section{$\Ext$ modules for $\S_n$-invariant monomial ideals}\label{sec:Ext-split-thick}

We let $S=\kk[e_1,\cdots,e_n]$ and let $\G$ be as in (\ref{eq:def-G}). The goal of this section is to prove the Main Theorem from the Introduction, describing the modules $\Ext^j_S(S/I,S)$ for $I$ a $\G$-invariant ideal in $S$, as well as the natural maps between these modules. Our arguments follow closely the strategy employed in the study of invariant ideals in the ring of polynomial functions on the space of matrices from \cite{raicu-reg-coh}. As a consequence of the Main Theorem, we deduce the Theorem on Regularity and Projective Dimension in Section~\ref{subsec:reg-projdim}, and the Theorem on Injectivity of Maps from Ext to Local Cohomology in Section~\ref{subsec:inj-loccoh}. In Section~\ref{subsec:seq-CM} we show that every $\G$-invariant ideal $I$ has the property that $S/I$ is sequentially Cohen--Macaulay.

Using Definition~\ref{def:ZX} for the set $\Z(\X)$, the Main Theorem can be formulated more precisely as follows.

\begin{theorem}\label{thm:Ext-split-IX}
 Let $\X\subseteq\P_n$ and let $I_{\X}\subseteq S$ denote the associated $\G$-invariant ideal. For each $j\geq 0$ there exists a degree preserving isomorphism of $G$-representations (but in general, not of $S$-modules)
 \begin{equation}\label{eq:ExtS/IX}
  \Ext^j_S(S/I_{\X},S) \simeq \bigoplus_{(\ul{z},l)\in\Z(\X)}\Ext^{j}_S(J_{\z,l},S).
 \end{equation}
Moreover, if $\X,\Y\subset\P_n$ are such that $I_{\X}\subseteq I_{\Y}$, then the natural surjection $S/I_{\X}\onto S/I_{\Y}$ induces maps $\Ext^j_S(S/I_{\Y},S)\lra\Ext^j_S(S/I_{\X},S)$ for all $j\geq 0$, whose (co)kernels and images can be described via
 \begin{equation}\label{eq:kerExt}
 \ker\left(\Ext^j_S(S/I_{\Y},S)\lra\Ext^j_S(S/I_{\X},S)\right) \simeq \bigoplus_{(\ul{z},l)\in\Z(\Y)\setminus\Z(\X)}\Ext^{j}_S(J_{\z,l},S),
 \end{equation}
 \begin{equation}\label{eq:imExt}
 \operatorname{Im}\left(\Ext^j_S(S/I_{\Y},S)\lra\Ext^j_S(S/I_{\X},S)\right) \simeq \bigoplus_{(\ul{z},l)\in\Z(\Y)\cap\Z(\X)}\Ext^{j}_S(J_{\z,l},S),
 \end{equation}
 \begin{equation}\label{eq:cokerExt}
 \coker\left(\Ext^j_S(S/I_{\Y},S)\lra\Ext^j_S(S/I_{\X},S)\right) \simeq \bigoplus_{(\ul{z},l)\in\Z(\X)\setminus\Z(\Y)}\Ext^{j}_S(J_{\z,l},S).
 \end{equation}
 Finally, recall that the saturation of $I_{\X}$ with respect to $I_p$ is given by $I_{\X^{:p}}$ (see (\ref{eq:p-saturation})--(\ref{eq:saturation})). We have
 \begin{equation}\label{eq:Z-saturation}
 \Z(\X^{:p}) = \{(\z,l)\in\Z(\X): l\geq p\} \subseteq \Z(\X).
 \end{equation}
 In particular, if we apply (\ref{eq:kerExt}) to the inclusion $I_{\X}\subseteq I_{\X^{:p}}$ we obtain for each $j\geq 0$ injective maps
 \[\Ext^j_S(S/I_{\X^{:p}},S) \lra \Ext^j_S(S/I_{\X},S).\]
\end{theorem}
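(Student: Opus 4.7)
The plan is to follow the template of \cite[Theorem~3.2]{raicu-reg-coh}: stratify $S/I_\X$ by its saturation filtration into pure-dimensional pieces, decompose each piece as a direct sum of modules $J_{\z,l}$ with a fixed value of $l$, and read off the $\Ext$ calculation from the resulting long exact sequences. The supporting results in Section~\ref{sec:prelim} slot in as follows: Proposition~\ref{prop:decomp-Jzl} and Corollary~\ref{cor:properties-Jzl} compute $\Ext^j_S(J_{\z,l},S)$; Lemma~\ref{lem:Jzl-sub-IY} and Proposition~\ref{prop:ZYcupz} drive the peeling-off induction; and Corollary~\ref{cor:conn-hom-Yzl-0} supplies the cohomological vanishing that splits the long exact sequences.

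Concretely, consider the ascending chain $I_\X=I_{\X^{:0}}\subseteq I_{\X^{:1}}\subseteq\cdots\subseteq I_{\X^{:p_{\max}}}=S$, where $p_{\max}=\max\{x'_1:\x\in\X\}$. Each subquotient $I_{\X^{:p+1}}/I_{\X^{:p}}$ is $\G$-equivariant, annihilated by $I_{p+1}$ (via (\ref{eq:saturation})), and Cohen--Macaulay of pure dimension $p$. I inductively decompose it as $\bigoplus J_{\z,p}$ indexed by those $(\z,p)\in\Z(\X)$ with second coordinate $p$: using Lemma~\ref{lem:characterize-zl-in-ZY} one selects such a pair for which $I_{\mf{succ}(\z,p)}\subseteq I_{\X^{:p}}$, and then Lemma~\ref{lem:Jzl-sub-IY} yields an embedding $J_{\z,p}\hookrightarrow S/I_{\X^{:p}}$ which, by (\ref{eq:Ann-Jzl}), automatically factors through the subquotient. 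Proposition~\ref{prop:ZYcupz} identifies the indexing set of the remaining summands, and induction concludes.

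Next, apply $\Ext^*_S(-,S)$ to each short exact sequence $0\to I_{\X^{:p+1}}/I_{\X^{:p}}\to S/I_{\X^{:p}}\to S/I_{\X^{:p+1}}\to 0$. By Corollary~\ref{cor:properties-Jzl}, $\Ext^j_S(I_{\X^{:p+1}}/I_{\X^{:p}},S)$ is concentrated in cohomological degree $j=n-p$. The only potentially nonzero connecting map, $\Ext^{n-p}_S(I_{\X^{:p+1}}/I_{\X^{:p}},S)\to\Ext^{n-p+1}_S(S/I_{\X^{:p+1}},S)$, has a target that by downward induction on $p$ consists of contributions from $\Ext^{n-p+1}_S(J_{\y,u},S)$ with $u=p-1$; yet by the saturation identity (\ref{eq:Z-saturation}) --- verified directly from Definition~\ref{def:ZX} and (\ref{eq:p-saturation}) --- no $(\y,p-1)$ lies in $\Z(\X^{:p+1})$. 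Hence the connecting map vanishes, and the long exact sequence degenerates into isomorphisms in degrees $\neq n-p$ together with a short exact sequence
\[0\to\Ext^{n-p}_S(S/I_{\X^{:p+1}},S)\to\Ext^{n-p}_S(S/I_{\X^{:p}},S)\to\Ext^{n-p}_S(I_{\X^{:p+1}}/I_{\X^{:p}},S)\to 0,\]
which splits as a sequence of $\G$-representations because the $\S_n$-isotypic content of the right-hand term (supported on the orbits $\mc{O}(\z)$ for $(\z,p)\in\Z(\X)$) is disjoint from that of the inductively known left-hand term. Iterating from $p=p_{\max}-1$ down to $p=0$ establishes (\ref{eq:ExtS/IX}).

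The remaining assertions follow from the direct-sum decomposition. For (\ref{eq:kerExt})--(\ref{eq:cokerExt}), functoriality of the saturation filtration with respect to an inclusion $I_\X\subseteq I_\Y$ --- after refinement into a common filtration --- shows the induced maps act summand-wise as the identity on $\Z(\X)\cap\Z(\Y)$ and as zero on the remaining summands. The saturation identity (\ref{eq:Z-saturation}) was verified above as part of the splitting argument, and the final injectivity statement is immediate from (\ref{eq:kerExt}) applied to $I_\X\subseteq I_{\X^{:p}}$. The main obstacle is the inner decomposition of each subquotient: at every stage one must choose $(\z,p)$ so that both $I_{\mf{succ}(\z,p)}\subseteq I_{\X^{:p}}$ and $I_{\X^{:p}}\cap I_\z=I_{\mf{succ}(\z,p)}$ hold, guaranteeing that Lemma~\ref{lem:Jzl-sub-IY} delivers a short exact sequence whose cokernel matches the predicted decomposition --- a delicate combinatorial analysis that parallels the argument carried out in \cite[Section~3]{raicu-reg-coh}.
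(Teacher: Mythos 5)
Your approach is genuinely different from the paper's, but it contains a decisive error. The paper proves the theorem by a single induction on $|\Z(\X)|$, peeling off one $J_{\z,l}$ at a time from the filtration of Proposition~\ref{prop:filtration-S/IX}, showing at each step (via Lemma~\ref{lem:phi-jzl-onto}, which rests on Corollary~\ref{cor:conn-hom-Yzl-0}) that the relevant long exact sequence splits; the auxiliary submodules $\E^j_{\z,l}(\X)$ are constructed so that functoriality (\ref{eq:E^j-YtoX}) automatically yields (\ref{eq:kerExt})--(\ref{eq:cokerExt}). You instead propose a two-level stratification: first filter by saturation level $p$ (which the paper does use --- but only \emph{afterwards}, in Section~\ref{subsec:seq-CM}, as a consequence of the theorem), and then decompose each stratum $I_{\X^{:(p+1)}}/I_{\X^{:p}}$ as a direct sum $\bigoplus J_{\z,p}$.

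That inner decomposition is false as a statement about $S$-modules, and this is not a repairable gap but an actual error. Take the paper's own Example~\ref{ex:reg-pdim}: $n=3$, $\X=\{(2,1,1),(4,2)\}$, for which $\Z(\X)$ contains two pairs $(\emptyset,1)$ and $((1,1),1)$ with $l=1$. Here $I_{\X^{:2}}=S$, so the top stratum $I_{\X^{:2}}/I_{\X^{:1}}=S/I_{\X^{:1}}$ is a cyclic graded $S$-module (indeed a cyclic graded module over a graded local ring, hence indecomposable), yet your claimed decomposition $J_{\emptyset,1}\oplus J_{(1,1),1}$ has two nonzero summands. What is true is that this stratum carries a \emph{filtration} with successive quotients $J_{\z,p}$; the hard content of the theorem is precisely that the induced filtration on $\Ext^{n-p}_S$ splits $G$-equivariantly. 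Your proposal uses the direct sum as an input --- it is what lets you write $\Ext^{n-p}_S(I_{\X^{:(p+1)}}/I_{\X^{:p}},S)=\bigoplus\Ext^{n-p}_S(J_{\z,p},S)$ --- so the argument is circular at exactly the point that requires work. You do acknowledge in the last paragraph that the inner decomposition is the ``main obstacle,'' but the mechanism you sketch (Lemma~\ref{lem:Jzl-sub-IY} together with Proposition~\ref{prop:ZYcupz}) only produces an embedding $J_{\z,p}\hookrightarrow I_{\X^{:(p+1)}}/I_{\X^{:p}}$ and identifies the remaining composition factors; it does not split off $J_{\z,p}$ as a direct summand, and no such splitting exists. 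The same issue infects your treatment of (\ref{eq:kerExt})--(\ref{eq:cokerExt}): without a canonical family of submodules $\E^j_{\z,l}(\X)\subseteq\Ext^j_S(S/I_\X,S)$ compatible with the transition maps (the role played by (\ref{eq:E^j-YtoX}) in the paper), ``acting summand-wise'' on a common refinement is not defined. The vanishing of the connecting map and the saturation identity (\ref{eq:Z-saturation}) in your write-up are fine, and the observation that $\Ext^{n-p}_S(S/I_{\X^{:(p+1)}},S)=0$ (making your splitting discussion vacuous) is correct, but neither circumvents the problem with the inner decomposition.
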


The equality (\ref{eq:Z-saturation}) is the same as that in \cite[(3.6)]{raicu-reg-coh} and is proved there. To prove Theorem~\ref{thm:Ext-split-IX} we construct for each pair $(\z,l)$ and each subset $\X\subset\P_n$ a collection of $G$-submodules
\begin{equation}\label{eq:EjzlX}
 \E^j_{\z,l}(\X) \subseteq \Ext^j_S(S/I_{\X},S)
\end{equation}
with the following properties. 
\begin{enumerate}

\item For every inclusion $I_{\X}\subseteq I_{\Y}$, if we denote by $\pi:S/I_{\X}\onto S/I_{\Y}$ the corresponding quotient map, then for $j\geq 0$ the induced map
\begin{equation}\label{eq:pi^j-YtoX}
\pi^j_{\Y\to\X}:\Ext^j_S(S/I_{\Y},S) \lra \Ext^j_S(S/I_{\X},S)
\end{equation}
has the property that for all $j\geq 0$ and all pairs $(\z,l)$ such that $I_{\X} \subseteq I_{\Y} \subseteq I_{\Y_{\z,l}}$
\begin{equation}\label{eq:E^j-YtoX}
\E^j_{\z,l}(\X) = \pi^j_{\Y\to\X}(\E^j_{\z,l}(\Y)).
\end{equation}

\item For each $j\geq 0$, we have that
\begin{equation}\label{eq:E^j_zl-X}
 \E^j_{\z,l}(\X)\simeq\begin{cases}
  \Ext^{j}_S(J_{\z,l},S) & \rm{when }(\z,l)\in\Z(\X), \\
  0 & \rm{otherwise}.
 \end{cases}
\end{equation}

\item For each $\X\subset\P_n$ and $j\geq 0$, the inclusions (\ref{eq:EjzlX}) give rise to a decomposition
\begin{equation}\label{eq:Ext^j=sum-E^j}
\Ext^j_S(S/I_{\X},S) = \bigoplus_{(\z,l)\in\Z(\X)} \E^j_{\z,l}(\X).
\end{equation}
\end{enumerate}

Once these properties are established, (\ref{eq:ExtS/IX}) follows by combining (\ref{eq:Ext^j=sum-E^j}) with (\ref{eq:E^j_zl-X}), while (\ref{eq:kerExt})--(\ref{eq:cokerExt}) follow from (\ref{eq:E^j-YtoX}) and Lemma~\ref{lem:zl-in-ZY-Yzl}. To construct the $\G$-submodules in (\ref{eq:EjzlX}) we start with the following.

\begin{lemma}\label{lem:phi-jzl-onto}
 For every pair $(\z,l)$ where $\z\in\P_n$ is a partition satisfying $z_1=\cdots=z_{l+1}$, the short exact sequence (\ref{eq:ses-Jzl-Yzl}) induces $G$-equivariant surjective maps at the level of $\Ext$ modules
 \begin{equation}\label{eq:surj-phi^j_zl}
 \phi^j_{\z,l}:\Ext^j_S(S/I_{\Y_{\z,l}},S) \onto \Ext^j_S(J_{\z,l},S)\mbox{ for }j\geq 0.
 \end{equation}
\end{lemma}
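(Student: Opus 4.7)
The plan is to reduce to a single cohomological degree and there show that the connecting map in the long exact sequence vanishes. The first step is easy: since $J_{\z,l}$ is Cohen--Macaulay of codimension $n-l$ by Corollary~\ref{cor:properties-Jzl}, we have $\Ext^j_S(J_{\z,l},S)=0$ whenever $j\neq n-l$, so $\phi^j_{\z,l}$ is automatically surjective in each such degree. Only the case $j=n-l$ requires genuine work.

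Applying $\Hom_S(-,S)$ to (\ref{eq:ses-Jzl-Yzl}) gives an exact piece
\[ \Ext^{n-l}_S(S/I_{\Y_{\z,l}},S)\xrightarrow{\phi^{n-l}_{\z,l}} \Ext^{n-l}_S(J_{\z,l},S)\xrightarrow{\delta}\Ext^{n-l+1}_S(S/I_{\Y_{\z,l}\cup\{\z\}},S), \]
so surjectivity of $\phi^{n-l}_{\z,l}$ reduces to the vanishing of $\delta$, which is implied by the stronger statement $\Ext^{n-l+1}_S(S/I_{\Y_{\z,l}\cup\{\z\}},S)=0$, i.e.\ $\pdim_S(S/I_{\Y_{\z,l}\cup\{\z\}})\leq n-l$. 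The clean combinatorial input is Corollary~\ref{cor:conn-hom-Yzl-0}, which forces every $(\y,u)\in\Z(\Y_{\z,l}\cup\{\z\})$ to satisfy $u\geq l$. Once the Ext-splitting of Theorem~\ref{thm:Ext-split-IX} is available for the ideal $I_{\Y_{\z,l}\cup\{\z\}}$, the module $\Ext^{n-l+1}_S(S/I_{\Y_{\z,l}\cup\{\z\}},S)$ decomposes as a sum of $\Ext^{n-l+1}_S(J_{\y,u},S)$, each of which vanishes because $J_{\y,u}$ is CM of codimension $n-u\leq n-l$.

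The principal obstacle is circularity: Theorem~\ref{thm:Ext-split-IX} is precisely what this lemma is meant to help establish. The natural remedy is a joint induction, proving Lemma~\ref{lem:phi-jzl-onto} together with the construction of the submodules $\mc{E}^j_{\z,l}(\X)$ and the decomposition (\ref{eq:Ext^j=sum-E^j}) simultaneously. A well-founded order may be defined on pairs $(\z,l)$ by, for instance, $(n-l,z_1)$ lexicographically decreasing; the set $\Y_{\z,l}\cup\{\z\}$ is built from rectangles $((z_i+1)^i)$ with $i\geq l+1$ together with $\z$ itself, and Corollary~\ref{cor:conn-hom-Yzl-0} ensures that the contributing pairs $(\y,u)$ sit strictly below $(\z,l)$ in this order. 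The inductive hypothesis then supplies the splitting for $I_{\Y_{\z,l}\cup\{\z\}}$ and hence the required vanishing, completing the argument.
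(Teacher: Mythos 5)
Your reduction to $j=n-l$ and the reformulation as vanishing of the connecting map into $\Ext^{n-l+1}_S(S/(I_{\z}+I_{\Y_{\z,l}}),S)=\Ext^{n-l+1}_S(S/I_{\Y_{\z,l}\cup\{\z\}},S)$, with Corollary~\ref{cor:conn-hom-Yzl-0} as the key combinatorial input, are exactly right. But the circularity you flag is not a real obstacle, and the joint induction you propose to repair it is unnecessary: the paper never invokes Theorem~\ref{thm:Ext-split-IX} here. Instead it uses Proposition~\ref{prop:filtration-S/IX}, a strictly weaker statement that is proved independently (it comes from Remark~\ref{rem:lattice-inv-ideals} and a general lemma about $G$-equivariant filtrations, namely \cite[Lemma~2.9]{raicu-reg-coh}). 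Proposition~\ref{prop:filtration-S/IX} gives only a $G$-equivariant \emph{inclusion} of $\Ext^{j}_S(S/I_{\X},S)$ into $\bigoplus_{(\y,u)\in\Z(\X)}\Ext^{j}_S(J_{\y,u},S)$, not an isomorphism; but an inclusion into the zero module already forces the source to vanish, which is all you need. Once you apply it with $\X=\Y_{\z,l}\cup\{\z\}$ and $j=n-l+1$, each summand $\Ext^{n-l+1}_S(J_{\y,u},S)$ is nonzero only when $u=l-1$, and Corollary~\ref{cor:conn-hom-Yzl-0} rules that out, so the target vanishes and the connecting map is zero.

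I would also caution that your induction sketch has a gap: the order $(n-l,z_1)$ with Corollary~\ref{cor:conn-hom-Yzl-0} only gives $u\geq l$, not $u>l$, so the contributing pairs $(\y,u)$ do not all sit strictly below $(\z,l)$. The pairs with $u=l$ happen to contribute zero to the $(n-l+1)$-st $\Ext$, so the argument might be patched, but it is delicate; moreover the paper's proof of Theorem~\ref{thm:Ext-split-IX} is itself an induction on $|\Z(\X)|$, so interleaving the two inductions would require additional care. The filtration-based Proposition~\ref{prop:filtration-S/IX} sidesteps all of this, and is the intended tool.
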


\noindent To prove this result, we use the following analogue of \cite[Corollary~3.7]{raicu-reg-coh}, which follows from Remark~\ref{rem:lattice-inv-ideals}.

\begin{proposition}\label{prop:filtration-S/IX}
 There exists a $\G$-equivariant filtration of $S/I_{\X}$ whose successive quotients are the modules $J_{\z,l}$ with $(\z,l)\in\Z(\X)$. In particular (see \cite[Lemma~2.9]{raicu-reg-coh}), there exists a $G$-equivariant inclusion
 \[\Ext^j_S(S/I_{\X},S) \lra \bigoplus_{(\ul{z},l)\in\Z(\X)}\Ext^{j}_S(J_{\z,l},S)\mbox{  for each }j\geq 0.\]
\end{proposition}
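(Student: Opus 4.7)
The plan is to construct the filtration iteratively, peeling off one piece $J_{\z,l}$ at a time from $S/I_{\X}$ while $\Z(\X)$ serves as bookkeeping for what remains. The induction will be on $|\Z(\X)|$, and one may harmlessly assume $\X\neq\emptyset$.

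For the base case $\Z(\X)=\emptyset$, inspecting Definition~\ref{def:ZX} shows that among non-empty $\X$ this forces $\emptyset\in\X$, hence $I_{\X}=S$ and $S/I_{\X}=0$, so the empty filtration suffices. For the inductive step, the central task is to exhibit some $(\z,l)\in\Z(\X)$ together with a $\G$-equivariant embedding $J_{\z,l}\hookrightarrow S/I_{\X}$. By Lemma~\ref{lem:Jzl-sub-IY}, this reduces to the single inclusion $I_{\mf{succ}(\z,l)}\subseteq I_{\X}$, since $I_{\X}\subseteq I_{\Y_{\z,l}}$ is automatic from Lemma~\ref{lem:zl-in-ZY-Yzl}. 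Once such $(\z,l)$ is produced, setting $\X'=\X\cup\{\z\}$, Lemma~\ref{lem:Jzl-sub-IY} supplies the short exact sequence
\[0\lra J_{\z,l}\lra S/I_{\X}\lra S/I_{\X'}\lra 0,\]
and Proposition~\ref{prop:ZYcupz} yields $\Z(\X')=\Z(\X)\setminus\{(\z,l)\}$. Applying the inductive hypothesis to $\X'$ then furnishes a filtration of $S/I_{\X'}$ indexed by $\Z(\X)\setminus\{(\z,l)\}$; lifting this filtration through the surjection $S/I_{\X}\onto S/I_{\X'}$ and inserting $J_{\z,l}$ as the initial submodule produces the desired filtration of $S/I_{\X}$.

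The main obstacle is this selection step: finding $(\z,l)\in\Z(\X)$ with $I_{\mf{succ}(\z,l)}\subseteq I_{\X}$. The intended strategy is an extremality argument -- choose $(\z,l)\in\Z(\X)$ with $|\z|$ maximal and break ties by a suitable choice of $l$, so that any $\t\in\mf{succ}(\z,l)$ not already dominated by some $\x\in\X$ would yield a strictly larger element of $\Z(\X)$, contradicting maximality. A more economical route is to appeal to Remark~\ref{rem:lattice-inv-ideals}: the lattice of $\G$-invariant ideals in $S$ is isomorphic to that of $\GL_m\times\GL_n$-invariant ideals in $\Sym(\bb{C}^m\oo\bb{C}^n)$, and under this correspondence the modules $J_{\z,l}$ and the sets $\Z(\X)$ match the analogous objects in \cite{raicu-reg-coh}, so the filtration transports directly from \cite[Corollary~3.7]{raicu-reg-coh}.

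Finally, once the filtration of $S/I_{\X}$ with successive quotients $J_{\z,l}$ is in place, the $\G$-equivariant injection on Ext modules follows by induction on the filtration length from the long exact sequences attached to each subquotient short exact sequence, exactly as recorded in \cite[Lemma~2.9]{raicu-reg-coh}.
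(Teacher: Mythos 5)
Your inductive framework is correct and, importantly, you also identify the route the paper actually takes. The paper's entire proof of Proposition~\ref{prop:filtration-S/IX} is the one-line appeal you call the "more economical route": by Remark~\ref{rem:lattice-inv-ideals}, the lattice of $\G$-invariant ideals in $S$ is isomorphic to the lattice of $\GL\times\GL$-equivariant ideals on the space of matrices, the $J_{\z,l}$ and $\Z(\X)$ correspond to the analogous objects there, and the filtration is simply transported from \cite[Corollary~3.7]{raicu-reg-coh}. The passage to the injection on $\Ext$ via \cite[Lemma~2.9]{raicu-reg-coh} is also exactly as you describe. So the core of your proposal matches the paper.

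What you present as the "intended strategy" -- the self-contained induction on $|\Z(\X)|$ with a selection step -- is set up correctly: peeling off $J_{\z,l}\hookrightarrow S/I_{\X}$ via Lemmas~\ref{lem:zl-in-ZY-Yzl} and \ref{lem:Jzl-sub-IY}, passing to $\X'=\X\cup\{\z\}$, and using Proposition~\ref{prop:ZYcupz} to shrink $\Z$ by exactly one element is precisely the mechanism the paper exploits later in the proof of Theorem~\ref{thm:Ext-split-IX} (where it invokes "the smallest non-zero term in the filtration"). However, your selection step is not actually proved: you say to choose $(\z,l)\in\Z(\X)$ with $|\z|$ maximal and "break ties by a suitable choice of $l$," but you do not specify the tiebreak, and it is not evident that maximizing $|\z|$ alone forces $I_{\mf{succ}(\z,l)}\subseteq I_{\X}$. (For instance, the minimal elements of $\mf{succ}(\z,l)$ include $((c+1)^{l+1},z_{l+2},\dots)$ as well as $\z+e_i$ for each row $i>l+1$ with $z_{i-1}>z_i$, and one would have to show that any such $\t$ failing to dominate some $\x\in\X$ produces a pair in $\Z(\X)$ that beats $(\z,l)$ in the chosen order -- this requires a genuine argument.) Since you fall back on the lattice-isomorphism route, which is what the paper does, your proposal is correct; but if you wished to give a self-contained proof of the selection step you would need to make the extremality argument precise rather than leave it as a plan.

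One tiny caveat on the base case: $\Z(\X)=\emptyset$ (with $\X\neq\emptyset$) forces $\emptyset\in\X$ and hence $S/I_{\X}=0$, as you say; but note the statement of the proposition implicitly excludes $\X=\emptyset$, since $\Z(\emptyset)=\emptyset$ while $S/I_{\emptyset}=S\neq 0$, so your assumption $\X\neq\emptyset$ is in fact necessary, not merely harmless.
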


\begin{proof}[Proof of Lemma~\ref{lem:phi-jzl-onto}]
 In light of Corollary~\ref{cor:properties-Jzl}, we only need to consider $j=n-l$, since $\Ext^j_S(J_{\z,l},S)=0$ for $j\neq n-l$. To check (\ref{eq:surj-phi^j_zl}) in this case, it suffices to prove that the connecting homomorphism
\begin{equation}\label{eq:conn-hom-Yzl-0}
\Ext^{n-l}_S(J_{\z,l},S) \lra \Ext^{n-l+1}_S(S/(I_{\z}+I_{\Y_{\z,l}}),S)
\end{equation}
is identically zero. Using Proposition~\ref{prop:filtration-S/IX} with $\X=\Y_{\z,l}\cup\{\z\}$, it suffices to check that
\[\bigoplus_{(\y,u)\in\Z(\Y_{\z,l}\cup\{\z\})}\Ext^{n-l+1}_S(J_{\y,u},S)=0.\]
Using Corollary~\ref{cor:properties-Jzl}, this follows from Corollary~\ref{cor:conn-hom-Yzl-0}, since no $(\y,u)\in\Z(\Y_{\z,l}\cup\{\z\})$ satisfies $u=l-1$.
\end{proof}

Using Lemma~\ref{lem:phi-jzl-onto}, we can choose a $\G$-submodule 
 \begin{equation}\label{eq:def-Ejzl}
 \E^j_{\z,l}\subseteq \Ext^j_S(S/I_{\Y_{\z,l}},S)
 \end{equation}
with the property that $\phi^j_{\z,l}$ maps $\E^j_{\z,l}$ isomorphically onto $\Ext^j_S(J_{\z,l},S)$, and note that by Corollary~\ref{cor:properties-Jzl}, $\E^j_{\z,l}=0$ if $j\neq n-l$. For every ideal $I_{\X} \subseteq I_{\Y_{\z,l}}$, we define
\[\E^j_{\z,l}(\X) = \pi^j_{\Y_{\z,l}\to\X}(\E^j_{\z,l}),\]
so that in particular $\E^j_{\z,l}(\Y_{\z,l})=\E^j_{\z,l}$. We also let $\E^j_{\z,l}(\X)=0$ if $I_{\X} \not\subseteq I_{\Y_{\z,l}}$. The equality (\ref{eq:E^j-YtoX}) follows from the functoriality of maps of $\Ext$ modules.

To prove (\ref{eq:E^j_zl-X}), we may assume that $I_{\X} \subseteq I_{\Y_{\z,l}}$: indeed, if that's not the case then $\E^j_{\z,l}(\X)=0$ by definition, and it follows from Lemma~\ref{lem:zl-in-ZY-Yzl} that $(\z,l)\not\in\Z(\X)$. We show that if $(\z,l)\not\in\Z(\X)$ then $\E^j_{\z,l}(\X)=0$, which again is interesting only for $j=n-l$. Using Lemma~\ref{lem:characterize-zl-in-ZY}, it follows that $I_{\X} \subseteq I_{\Y'_{\z,l}}$, so by (\ref{eq:E^j-YtoX}) it suffices to prove that $\E^{n-l}_{\z,l}(\Y'_{\z,l})=0$. For this, it is then enough to check that $\Ext^{n-l}_S(S/I_{\Y'_{\z,l}},S)=0$, which follows from Proposition~\ref{prop:filtration-S/IX} and Corollary~\ref{cor:properties-Jzl} if we can show that there is no $(\y,u)\in\Z(\Y'_{\z,l})$ with $u=l$. Using (\ref{eq:def-Y'zl}), every $\x\in\Y'_{\z,l}$ has the property that $x'_j>l+1$ whenever $x'_j\neq 0$, so Lemma~\ref{lem:yu-in-ZY} applies to give the desired conclusion.

To end the proof of Theorem~\ref{thm:Ext-split-IX}, we show simultaneously that $\E^j_{\z,l}(\X)\simeq\Ext^{j}_S(J_{\z,l},S)$ when $(\z,l)\in\Z(\X)$ and that (\ref{eq:Ext^j=sum-E^j}) holds, by induction on the size of $\Z(\X)$. The smallest non-zero term in the filtration from Proposition~\ref{prop:filtration-S/IX} gives a $G$-equivariant inclusion $J_{\z,l}\subseteq S/I_{\X}$, with $(\z,l)\in\Z(\X)$. It follows from Lemma~\ref{lem:Jzl-sub-IY} that $I_{\X} \subseteq I_{\Y_{\z,l}}$, so we get natural maps $J_{\z,l} \lra S/I_{\X} \lra S/I_{\Y_{\z,l}}$ giving rise to a commutative diagram
\[
\xymatrix{
\Ext^j_S(S/I_{\Y_{\z,l}}) \ar[r] & \Ext^j_S(S/I_{\X}) \ar[r] & \Ext^j_S(J_{\z,l})  \ar@{=}[d] \\
\E^j_{\z,l} \ar[r]^{\a} \ar@{^{(}->}[u] & \E^j_{\z,l}(\X) \ar[r]^{\b} \ar@{^{(}->}[u] & \Ext^j_S(J_{\z,l})
}
\] 
By the definition of $\E^j_{\z,l}(\X)$ the map $\a$ is surjective, and by the definition of $\E^j_{\z,l}$ the composition $\b\circ\a$ is an isomorphism. This shows that $\a$ is also injective, so it is an isomorphism, and it shows that $\b$ is also an isomorphism. Letting $\Y = \X \cup \{\z\}$, we get an exact sequence
\begin{equation}\label{eq:ses-JXY}
 0 \lra J_{\z,l} \lra S/I_{\X} \lra S/I_{\Y} \lra 0.
\end{equation}
Since $\b$ is surjective, we get that the maps $\Ext^j_S(S/I_{\X},S) \lra \Ext^j_S(J_{\z,l},S)$ are also surjective for $j\geq 0$. It follows that the long exact sequence for $\Ext(-,S)$ induced by (\ref{eq:ses-JXY}) splits into short exact sequences
\[0 \lra \Ext^j_S(S/I_{\Y},S) \overset{\pi^j_{\Y\to\X}}{\lra} \Ext^j_S(S/I_{\X},S) \lra \Ext^j_S(J_{\z,l},S) \lra 0.\]
Since $\b^{-1}$ gives a splitting of the surjection in the sequence above, we obtain a decomposition
\begin{equation}\label{eq:decomp-EjIX}
 \Ext^j_S(S/I_{\X},S) = \pi^j_{\Y\to\X}\left(\Ext^j_S(S/I_{\Y},S)\right) \oplus \E^j_{\z,l}(\X).
\end{equation}
Using Proposition~\ref{prop:ZYcupz} we have that $\Z(\Y) = \Z(\X) \setminus \{(\z,l)\}$, so by induction we conclude that $\E^j_{\y,u}(\Y)\simeq\Ext^{j}_S(J_{\y,u},S)$ when $(\y,u)\in\Z(\Y)$, and that
\[\Ext^j_S(S/I_{\Y},S) = \bigoplus_{(\y,u)\in\Z(\Y)} \E^j_{\y,u}(\Y).\]
Applying $\pi^j_{\Y\to\X}$ and using (\ref{eq:E^j-YtoX}), we can rewrite (\ref{eq:decomp-EjIX}) as
\[\Ext^j_S(S/I_{\X},S) = \left(\bigoplus_{(\y,u)\in\Z(\Y)} \E^j_{\y,u}(\X)\right) \oplus \E^j_{\z,l}(\X) = \bigoplus_{(\y,u)\in\Z(\X)} \E^j_{\y,u}(\X),\]
showing (\ref{eq:Ext^j=sum-E^j}) and concluding our proof.

\subsection{The proof of the Theorem on Regularity and Projective Dimension}\label{subsec:reg-projdim}
If $I_{\X}\neq S$ then we have 
\[\reg(I_{\X}) = 1 + \reg(S/I_{\X}),\mbox{ and }\pdim(I_{\X}) = -1 + \pdim(S/I_{\X}).\]
Using (\ref{eq:regM}) we obtain from (\ref{eq:ExtS/IX}) and Corollary~\ref{cor:properties-Jzl} that
\[  \reg(I_{\X}) =1 + \max_{(\ul{z},l)\in\Z(\X)}\reg(J_{\z,l}) = 1 + \max_{(\ul{z},l)\in\Z(\X)}(|\z|+l).\]
Similarly, since $\pdim(M)=\max\{j:\Ext^j_S(M,S)\neq 0\}$, we get
\[ \pdim(I_{\X}) = -1 + \max_{(\ul{z},l)\in\Z(\X)}\pdim(J_{\z,l}) = -1 + \max_{(\ul{z},l)\in\Z(\X)}(n-l),\]
concluding the proof of (\ref{eq:reg-pdim-IX}).
 
\subsection{The proof of the Theorem on Injectivity of Maps from Ext to Local Cohomology}\label{subsec:inj-loccoh}
We prove the chain of implications $(1)\Rightarrow(2)\Rightarrow(3)\Rightarrow(4)\Rightarrow(5)\Rightarrow(1)$. We assume that $\X$ consists of incomparable partitions and let $p=\min\{x_1':\x\in\X\}$. We have that $\sqrt{I}=I_p$ is the ideal defining the union of all $(p-1)$-dimensional coordinate planes and $\dim(S/I)=p-1$. In particular, if $(\z,l)\in\Z(\X)$ then $l\leq p-1$.

\noindent $(1)\Rightarrow(2)$: This is explained in a more general setting in \cite[Example~6.3]{EMS}.

\noindent $(2)\Rightarrow(3)$: Since $I$ is unmixed, we get that $I=I:I_{p-1}^{\infty}=I_{\X^{:(p-1)}}$, so by (\ref{eq:p-saturation}) it follows that no $\x\in\X$ has non-zero columns of size $\leq(p-1)$, or equivalently, every $\x\in\X$ satisfies $x_1=\cdots=x_p$.

\noindent $(3)\Rightarrow(4)$: If $(\z,l)\in\Z(\X)$ then $l\geq p-1$ by Lemma~\ref{lem:yu-in-ZY}. We have already noted that $l\leq p-1$, so $l=p-1$.

\noindent $(4)\Rightarrow(5)$: It follows from (\ref{eq:ExtS/IX}) and Corollary~\ref{cor:properties-Jzl} that $\Ext^j_S(S/I,S)=0$ for $j\neq \codim(I)$, so $S/I$ is Cohen--Macaulay.

\noindent $(5)\Rightarrow(1)$: Consider the ideal $J = I_{(W^p)}$ where $W$ is as in (\ref{def:mwW}), and note that $I\supseteq J$. Since $\sqrt{I}=\sqrt{J}=I_p$, we have $H^j_I(S)=H^j_J(S)$. By \cite[Theorem~1.1]{mustata}, the natural maps $\Ext^j_S(S/J,S) \lra H_J^j(S)$ are injective, so it is enough to show that the maps $\Ext^j_S(S/I,S)\lra \Ext^j_S(S/J,S)$ are injective. By (\ref{eq:kerExt}), this reduces to showing that $\Z(\X)\subseteq\Z(\Y)$, where $\Y=\{(W^p)\}$. Using (\ref{eq:Zx}) we get that $(\z,p-1)\in\Z(\Y)$ for every partition $\z\in\P_n$ with $z_1=\cdots=z_p\leq W-1$, so it suffices to show that any element of $\Z(\X)$ has this form. Consider any $(\z,l)\in\Z(\X)$. Since $\Ext^{n-l}_S(J_{\z,l},S)\neq 0$ and since $S/I$ is Cohen--Macaulay, it follows from (\ref{eq:ExtS/IX}) that $l=p-1$. By Remark~\ref{rem:z1--zl+1} we have that $z_1=\cdots=z_p$. If we let $c=z_1$ then we have by Definition~\ref{def:ZX}(2) that there exists $\x\in\X$ with $x'_{c+1}=p\neq 0$, so $c+1\leq x_1\leq W$, proving that $(\z,l)\in\Z(\Y)$. 

\subsection{The sequentially Cohen--Macaulay property}\label{subsec:seq-CM}

In this section we show, following the suggestion of Satoshi Murai, that for every $G$-invariant ideal $J$, the quotient $S/J$ is \defi{sequentially Cohen--Macaulay}. Letting $p=\dim(S/J)-1$, this amounts to the existence of a filtration (see \cite[Definition III.2.9]{stanley})
\[ J = J_0 \subseteq J_1\subseteq \cdots \subseteq J_p = S,\]
with the property that $J_{i+1}/J_i$ is either zero, or it is a Cohen--Macaulay module of dimension $i$. We claim that if $J=I_{\X}$ then we can take $J_i = J : I_i^{\infty} = I_{\X^{:i}}$ for $i=0,\cdots,p$. Indeed, we have by~(\ref{eq:Z-saturation}) that
\[ \Z(\X^{:0}) \supseteq \Z(\X^{:1}) \supseteq \cdots \supseteq \Z(\X^{:p}),\]
so the induced maps $\Ext^j_S(S/J_{i+1},S) \lra \Ext^j_S(S/J_{i},S)$ are injective. It follows from (\ref{eq:cokerExt}) that
\[
 \Ext^j_S(J_{i+1}/J_i,S) = \bigoplus_{(\ul{z},l)\in\Z(\X^{:i})\setminus\Z(\X^{:(i+1)})}\Ext^{j}_S(J_{\z,l},S).
\]
Using again (\ref{eq:Z-saturation}), we have that each $(\z,l)$ in the equation above satisfies $l=i$, so by Corollary~\ref{cor:properties-Jzl} we obtain $\Ext^j_S(J_{i+1}/J_i,S)=0$ for $j\neq n-i$, proving that $J_{i+1}/J_i$ is Cohen--Macaulay of dimension $i$ (or $J_i=J_{i+1}$).


\section{Ideals with a linear resolution}\label{sec:linear}

In \cite{symmetric-shifted}, the authors introduce a class of $\S_n$-invariant monomial ideals called \defi{symmetric shifted}, and they prove that these ideals have a linear minimal free resolution when they are generated in a single degree. They also leave open the problem of determining which (other) $\S_n$-invariant monomial ideals have a linear resolution. The goal of this section is to prove that no other such ideals exist, establishing the first part of the Theorem on Linear Resolutions described in the Introduction (the second part will be treated in Section~\ref{subsec:linear-pows}). We end the section explaining how our results also imply that symmetric shifted ideals that are generated in a single degree have a minimal resolution (which was proved in \cite[Section~3]{symmetric-shifted}). We recall the following \cite[Definition~1.1]{symmetric-shifted} (the minor differences in our definition below are due to a slight change in the conventions regarding partitions).

\begin{definition}\label{def:sym-shift}
 A $\S_n$-symmetric ideal $I\subseteq S$ is \defi{symmetric shifted} if for every monomial $e^{\x}\in I$ with $\x\in\P_n$, and every $1<k\leq n$ such that $x_1>x_k$, we have that $e^{\x}\cdot(e_k/e_1)\in I$.
\end{definition}

\begin{remark}\label{rem:def-sym-shift}
 We make a few observations regarding Definition~\ref{def:sym-shift} that will help streamline some of the subsequent arguments.
\begin{enumerate}
\item By \cite[Lemma~2.2]{symmetric-shifted}, it suffices to consider in Definition~\ref{def:sym-shift} only minimal generators $e^{\x}$ of $I$. 

\item If $x_1-x_k=1$ then $e^{\x}$ and $e^{\y}=e^{\x}\cdot(e_k/e_1)$ are in the same $\S_n$-orbit (since $y_1=x_k$, $y_k=x_1$, and $y_i=x_i$ for $i\neq 1,k$), so the conclusion $e^{\y}\in I$ follows from $\S_n$-invariance. Therefore, from now on we will only consider the case $x_1-x_k\geq 2$ when checking Definition~\ref{def:sym-shift}.

\item If $x_1=\cdots=x_h>x_{h+1}$ (that is, if $h$ is the height of the last column of the Young diagram of $\x$), and if $x_{g-1}>x_g=x_{g+1}=\cdots=x_k$, then $e^{\x}\cdot(e_k/e_1)$ and $e^{\y}=e^{\x}\cdot(e_g/e_h)$ are in the same $\S_n$-orbit. We may therefore assume that $g=k$ (that is, $x_{k-1}>x_k$), and the condition $e^{\x}\cdot(e_k/e_1)\in I$ is equivalent to $e^{\y}\in I$. The advantage is that $\y$ is still a partition!
\end{enumerate}
\end{remark}

\begin{remark}
In Section~\ref{subsec:linear-pows} we will use the notion of a \defi{symmetric strongly shifted} ideal to be one such that for every monomial $e^{\x}\in I$ with $\x\in\P_n$, and every $1\leq t<k\leq n$ such that $x_t>x_k$, we have that $e^{\x}\cdot(e_k/e_t)\in I$.
\end{remark}

\begin{theorem}\label{thm:sym-shift}
 If $I\subseteq S$ is an $\S_n$-invariant monomial ideal with a linear resolution then $I$ is a symmetric shifted ideal.
\end{theorem}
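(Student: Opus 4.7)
The plan is to argue by contrapositive: assuming $I_{\X}$ is not symmetric shifted, I will exhibit a pair $(\z,l)\in\Z(\X)$ with $|\z|+l+1=d+1$, where $d$ is the common degree of the generators of $I_{\X}$. Combined with the Theorem on Regularity and Projective Dimension, this forces $\reg(I_{\X})\geq d+1$, contradicting the linear resolution hypothesis. Using Remark~\ref{rem:def-sym-shift} to reduce to minimal generators, handle the $x_1-x_k=1$ case by $\S_n$-invariance, and replace the shifted monomial by a partition, I extract a triple $(\x,h,k)$ with $\x\in\X$, $1\leq h<k\leq n$, $x_1=\cdots=x_h>x_{h+1}$, $x_{k-1}>x_k$, $x_1-x_k\geq 2$, and with the partition $\y$ obtained from $\x$ by decreasing $x_h$ by one and increasing $x_k$ by one satisfying $e^{\y}\notin I_{\X}$. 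Among all such triples, I then choose one with $x_1$ maximal.

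Set $c:=x_1-1$, $\z:=\y(c)$, and $l:=h-1$. A direct computation gives
\[\z=\bigl((x_1-1)^h,x_{h+1},\ldots,x_{k-1},x_k+1,x_{k+1},\ldots,x_n\bigr)\]
and $|\z|=d-h+1$, hence $|\z|+l+1=d+1$. Condition~(1) of Definition~\ref{def:ZX} is witnessed by $\x$ itself, since $\x(c)=((x_1-1)^h,x_{h+1},\ldots,x_n)\leq\z$ and $x'_{c+1}=h=l+1$. The heart of the proof is to verify condition~(2). I argue by contradiction: suppose there exists $\w\in\X$ with $\w(c)\leq\z$ and $w_h\leq c$ (equivalently $w'_{c+1}\leq l$); I aim to show $\w\leq\y$ entrywise, which contradicts $\y\not\geq\ul{x}$ for every $\ul{x}\in\X$.

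For $i\geq h$, the inequality $w_i\leq y_i$ follows from a short case analysis: using $w_i\leq w_h\leq c$ to force the $\min$ in $\w(c)\leq\z$ to equal $w_i$, one obtains $w_i\leq x_i=y_i$ for $h<i<k$ and for $i>k$, one gets $w_k\leq x_k+1=y_k$ (using $x_1-x_k\geq 2$), and $w_h\leq c=y_h$. For $i<h$, $y_i=x_1$, so it suffices to prove $w_1\leq x_1$. If instead $w_1\geq x_1+1$, I set $h':=\max\{i:w_i=w_1\}$, which is $<h$ since $w_h<w_1$, and pick the smallest $k'\in(h',h]$ with $w_{k'-1}>w_{k'}$; since $w_{k'}\leq w_h\leq x_1-1$, one has $w_1-w_{k'}\geq 2$, so $(\w,h',k')$ is a triple of the type considered above but now with first coordinate $w_1>x_1$. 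The maximality of $x_1$ forces the analogous shifted partition $\y'$ of $\w$ to lie in $I_{\X}$, and since $|\y'|=d$ we must in fact have $\y'\in\X$. Iterating the sym-shifted move, with monovariant the excess $\sum_i\max(w_i-x_1,0)$, one eventually produces $\w''\in\X$ still satisfying the violator conditions but with $w''_1\leq x_1$; the trivial case of the argument for $i<h$ then yields $\w''\leq\y$, a contradiction.

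The main obstacle will be ensuring that the iterative step is well-defined and genuinely decreases the monovariant while preserving the conditions $\w''(c)\leq\z$ and $w''_h\leq c$. The subtle case is $k'=h$, which can occur when $h'=h-1$; there the sym-shifted move increases the $h$-th entry by one, and so one must split into the subcases $w_h<x_1-1$ (where the modified partition still satisfies $w''_h\leq c$) and $w_h=x_1-1$ (where $w''_h$ becomes $x_1$, and one instead verifies $\w''\geq\x$ entrywise, contradicting the antichain property of $\X$ given that $\w''\neq\x$). Making this case analysis rigorous and confirming the strict decrease of the monovariant at each step is the technical heart of the argument.
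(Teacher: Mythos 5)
Your overall strategy coincides with the paper's: choose a violating $\x$ with $x_1$ maximal, set $c=x_1-1$, $\z=\y(c)$, $l=h-1$, and show $(\z,l)\in\Z(\X)$ so that $\reg(I_\X)\geq d+1$, contradicting linearity; condition (1), the computation $|\z|+l+1=d+1$, and the case $w_1\le x_1$ are all handled correctly. The gap is in the treatment of a violator $\w$ with $w_1>x_1$. First, a small slip: with $k'$ the \emph{smallest} index in $(h',h]$ with $w_{k'-1}>w_{k'}$, you get $k'=h'+1$, and then $w_{k'}=w_{h'+1}\geq w_h$ rather than $\leq w_h$; you need to take $k'$ \emph{largest} (forcing $w_{k'}=w_h\leq c$) for the inequality $w_1-w_{k'}\geq 2$ to follow. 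More seriously, restricting the target of the shift to $k'\in(h',h]$ forces the "subtle case" $k'=h$, $w_h=c$, and there your proposed repair fails: the claim $\w''\geq\x$ entrywise is false in general. Indeed, after the move $\w''_i\geq x_i$ for $i\leq h$ but $\w''_i=w_i\leq y_i$ for $i>h$, with $y_i=x_i$ for $i\neq k$; comparing $|\w''|=|\x|=d$ and using $\sum_{i\leq h}\w''_i\geq hx_1+h'-1$ shows $\sum_{i>h}(x_i-\w''_i)\geq h'-1$, so whenever $h'\geq 2$ (in particular whenever $h\geq 3$ in your identified case $h'=h-1$) there must be some $i_0>h$ with $\w''_{i_0}<x_{i_0}$, so $\w''\not\geq\x$ and no contradiction with the antichain property arises.

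The paper's proof avoids this by \emph{not} restricting the target of the sym-shift move to $\leq h$: it seeks an index $s>h'$ with either $s\leq l=h-1$ and $w_1-w_s\geq 2$, or $s\geq h$ and $y_s>w_s$. In the problematic situation $w_h=c$ the second option automatically excludes $s=h$ (since $y_h=c=w_h$), and a size count shows that such an $s$ (possibly $>h$) exists. A single move to such an $s$ keeps $w''_i\leq z_i$ for $i\geq h$ and $w''_{c+1}'\leq l$, and the paper finishes with a lexicographic-minimality argument on the violator $\t$ rather than your monovariant. If you enlarge the allowed range for $k'$ to include $s>h$ chosen as above, your monovariant $\sum_i\max(w_i-x_1,0)$ will still strictly decrease and the violator conditions will be preserved, so the iteration can be salvaged; but as written, with $k'\leq h$, the subtle case is not handled correctly.
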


\begin{proof} 
 If $I$ has a linear resolution, then all its generators have the same degree, which we denote by $r$. Let $\X=\X(I)$, so that $I=I_{\X}$ and every $\x\in\X$ has $|\x|=r$. It follows from (\ref{eq:reg-pdim-IX}) that
 \begin{equation}\label{eq:ZX-sym-shift}
 |\z|+l+1 \leq r\mbox{ for every } (\z,l)\in\Z(\X).
 \end{equation}
 We assume by contradiction that $I$ is not symmetric shifted, so there exists $\x\in\P_n$ with $e^{\x}\in I$ a minimal generator (see Remark~\ref{rem:def-sym-shift}(1)), and $1<k\leq n$ such that $e^{\x}\cdot(e_k/e_1)\notin I$. We choose such an $\x$ for which $x_1$ is maximal, and define $\y$ as in Remark~\ref{rem:def-sym-shift}(3); note that in particular we are assuming that $x_{k-1}>x_k$. To get a contradiction, we will show that $e^{\y}\in I$. 
  
 By Remark~\ref{rem:def-sym-shift}(2), we may assume that $x_1-x_k\geq 2$. We define a pair $(\z,l)$ by letting $c=x_1-1$, $l=x'_{c+1}-1$ and $\z=\y(c)$, and observe that $e^{\y} = e^{\x}\cdot(e_k/e_{l+1})$ since $l+1=x'_{c+1}$ is the height of the last column of $\x$. Note also that $y'_{c+1}=l$ and therefore
 \[|\z|+l+1=|\y(c)|+y'_{c+1} +1  = |\y|+1 = |\x|+1 = r+1,\]
 so by (\ref{eq:ZX-sym-shift}) we get that $(\z,l)\notin\Z(\X)$. By construction, $\z\geq\x(c)$ and $x'_{c+1}=l+1$, so $(\z,l)$ satisfies condition (1) in Definition~\ref{def:ZX}. It follows that $(\z,l)$ must fail condition (2), that is, there exists a partition $\t\in\X$ (so that $|\t|=r$) with $\z\geq\t(c)$ and $t'_{c+1}\leq l$. We choose $\t$ to be minimal with respect to the lexicographic order.
 
 If $t_1\leq c+1$ then we claim that $\t=\y$, so that $e^{\y}\in I$ as desired. Indeed, the condition $t_1\leq c+1$ implies that $t_i\leq c+1=y_i$ for $i=1,\cdots,l$, while $t'_{c+1}\leq l$ implies $t_{l+1}\leq c$, so that 
 \[t_i = \t(c)_i\leq z_i = \y(c)_i = y_i,\mbox{ for }i=l+1,\cdots,n.\]
 This implies that $|\t|\leq |\y|=r$, but since $|\t|=r$, equality must hold everywhere, and thus $\t=\y$.
 
 We may therefore assume that $t_1>c+1=x_1$, and we let $h$ such that $t_1=\cdots=t_h>t_{h+1}$. Note that since $t'_{c+1}\leq l$, we have $h\leq l$. We claim that there exists an index $s>h$ such that either:
 \begin{itemize}
  \item $s\leq l$ and $t_1-t_s\geq 2$, or
  \item $s\geq l+1$ and $y_s>t_s$ (which implies $t_1-t_s>(c+1)-y_s\geq (c+1)-c=1$, so $t_1-t_s\geq 2$).
 \end{itemize}
If this wasn't the case, then we would get 
\[t_i = t_1 > c+1 = y_i\mbox{ for }i=1,\cdots,h,\]
\[t_i\geq t_1-1\geq c+1=y_i\mbox{ for }i=h+1,\cdots,l,\mbox{ and}\]
\[t_i\geq y_i\mbox{ for }i=l+1,\cdots,n,\]
so that $r=|\t| > |\y| = r$, a contradiction. If we take $s$ to be minimal and define $\u$ by $e^{\u} = e^{\t} \cdot (e_s/e_h)$ then $\u\in\P_n$. Moreover, since $e^{\u}$ is in the same $\S_n$-orbit as $e^{\t} \cdot (e_s/e_1)$, it follows from the maximal choice of $\x$ and the fact that $t_1>x_1$ that $e^{\u}\in I$, so $\u\in\X(I)$. Moreover, $\u$ satisfies $\z\geq\u(c)$ and $u'_{c+1}\leq l$, contradicting the minimality of $\t$ and concluding our proof.
\end{proof}

We end this section by showing that symmetric shifted ideals generated in a single degree have a linear resolution. Let $I$ be such an ideal and assume that it is generated in degree $r$. Let $\X=\X(I)$, so that $|\x|=r$ for all $\x\in\X$. Suppose by contradiction that $I$ does not have a linear resolution, so that $\reg(I)\geq r+1$. It follows that we can find $(\z,l)\in\Z(\X)$ with
\[|\z|+l+1 \geq r+1.\]
We let $c=z_1$ as usual. By Definition~\ref{def:ZX}, there exists $\x\in\X$ with $\z\geq\x(c)$ and $x'_{c+1}\leq l+1$, and we consider such an $\x$ which is lexicographically minimal. Note that by part (2) of Definition~\ref{def:ZX} we have $x'_{c+1}=l+1$. We claim that $\z>\x(c)$: suppose otherwise that $\z=\x(c)$, so that $z'_i=x'_i$ for $i=1,\cdots,c$; we have
\[ r+1\leq |\z|+l+1 = (x_1'+\cdots+x'_c) + x'_{c+1} \leq |\x| = r,\]
which is a contradiction. It follows that we can find $g>l+1$ such that $z_g>x_g$, and we choose a minimal such $g$, noting that this implies $x_{g-1}>x_g$ (since $x_{l+1}\geq c+1>z_g$ for all $g$). We let $h$, $e^{\y}$ as in Remark~\ref{rem:def-sym-shift}(3), and note that $\y\in\X(I)$ since $I$ is symmetric shifted. We also have that $\z\geq\y(c)$ and $y'_{c+1}\leq l+1$, and $\y$ is smaller than $\x$ lexicographically, contradicting the minimality of $\x$.
  
\section{Regularity of powers}\label{sec:reg-pows}

The goal of this section is to prove the Theorem on Regularity of Powers. Recall that for $\w\in\P_n$ we write 
\[\w' = (n^{a_0},h_1^{a_1},h_2^{a_2},\cdots,h_k^{a_k}),\mbox{ with }n>h_1>\cdots>h_k>0,\]
set $h_0=n$, and define
\begin{equation}\label{eq:def-bw}
b(\w) = \left(\sum_{t=1}^k (h_{t-1}-h_t)\cdot(a_t-1)\right) + (h_k-1)\cdot (a_k-1).
\end{equation}
We will prove the following.
\begin{theorem}\label{thm:reg-powers}
With the notation above, we have that
\begin{equation}\label{eq:reg-Iwd}
\reg(I_{\w}^d) = d\cdot |\w| + b(\w)\mbox{ for }d\gg 0.
\end{equation}
Moreover, $I_{\w}^d$ has a linear resolution for $d\gg 0$ if and only if $w_i-w_{i+1}\leq 1$ for all $i=1,\cdots,n-1$, in which case $I_{\w}^d$ is symmetric strongly shifted.
\end{theorem}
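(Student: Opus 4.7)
My plan is to apply the Theorem on Regularity and Projective Dimension to $\X=\X(I_\w^d)$ and identify the asymptotic value of the resulting combinatorial maximization using the high-multiplicity partitioning results of \cite{raicu-partitioning}. The set $\X(I_\w^d)$ consists of the sorted partitions $\x$ arising as sums $\sigma_1(\w)+\cdots+\sigma_d(\w)$ (with $\sigma_i\in\S_n$) that are minimal under componentwise $\leq$; in particular $|\x|=d|\w|$ for every such $\x$. By (\ref{eq:reg-pdim-IX}),
\[
\reg(I_{\w}^d)=\max\bigl\{|\z|+l+1:(\z,l)\in\Z(\X(I_\w^d))\bigr\},
\]
and the content of (\ref{eq:reg-Iwd}) is that this maximum equals $d|\w|+b(\w)$ once $d\gg 0$. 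The constraints in Definition~\ref{def:ZX} translate directly into a feasibility question about decomposing partitions as sums of $d$ permutations of $\w$, which is precisely the setting studied in \cite{raicu-partitioning}.

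The key step is pinpointing the extremal pair. I expect the maximum to be attained at $c=dw_1-1$ with witness $\x=d\w$, for which $l+1=x'_{c+1}=a_0$. Condition~(2) of Definition~\ref{def:ZX} then permits enlarging $\z$ beyond $\x(c)$ as long as no competing sum $\y\in\X$ satisfies $\y(c)\leq\z$ with $y'_{c+1}<a_0$. The admissible slack is governed by the block structure of $\w$: each of the $a_t-1$ surplus columns of $\w$ of height $h_t$ (for $t\geq 1$) contributes $h_{t-1}-h_t$, with an additional $h_k-1$ from the bottom block, and summing these contributions gives exactly $b(\w)$ as in (\ref{eq:def-bw}). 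The main technical obstacle, and the point where \cite{raicu-partitioning} enters essentially, is verifying both that such an enlargement is feasible (an admissible $\z$ achieving $b(\w)$ exists) and that it is tight (no $\z$ does better), both holding asymptotically in $d$.

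The second part of the theorem then follows quickly. Every summand in (\ref{eq:def-bw}) is a nonnegative product, so $b(\w)=0$ if and only if $a_t=1$ for every $t\geq 1$; translating through $\w'$, this says $w_i-w_{i+1}\leq 1$ for $i=1,\ldots,n-1$. Since $I_\w^d$ is generated in the single degree $d|\w|$, having a linear resolution is equivalent to $\reg(I_\w^d)=d|\w|$, which by the first part is in turn equivalent to $b(\w)=0$. To verify that $I_\w^d$ is then symmetric strongly shifted, I plan to check the definition directly: given $e^{\x}\in I_\w^d$ with $\x$ a sorted partition and indices $t<k$ such that $x_t>x_k$, write $\x=\y+\sum_i\sigma_i(\w)$ with $\y\in\bb{Z}^n_{\geq 0}$; either use the slack $y_t\geq 1$ directly, or, if $y_t=0$, exploit the no-gap hypothesis on $\w$ to produce a pair of transpositions among the $\sigma_i$'s whose combined effect is to decrease the sum in position $t$ and increase it in position $k$ by exactly one, certifying that $e^{\x}\cdot e_k/e_t\in I_\w^d$.
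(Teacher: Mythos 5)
Your reduction of the regularity formula (\ref{eq:reg-Iwd}) to a partitioning question and the appeal to \cite{raicu-partitioning} is exactly the route the paper takes: Proposition~\ref{prop:ZX-vs-feasibility} makes the translation $(\z,l)\in\Z(\X_\w^d)\Leftrightarrow\mf{BP}(d,\z;\w)$ is $(l+1)$-feasible but not $l$-feasible, and then Theorems~1.10 and~1.11 of \cite{raicu-partitioning} provide tightness and existence of the extremal pair. Your guess at the extremal pair is not quite right, though: for $\x=d\w$ and $c=dw_1-1$ one has $x'_{c+1}=w'_{w_1}$, the height of the last column of $\w$, which equals $h_k$ (or $n$ if $k=0$), not $a_0$; and in fact the lower-bound witness produced in \cite[Theorem~1.11]{raicu-partitioning} has $l=0$, not $l=x'_{c+1}-1$. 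This would need to be corrected, but it does not change the overall structure of the argument, which is sound. Your characterization of $b(\w)=0$ and its equivalence with $w_i-w_{i+1}\leq 1$ is correct and matches the paper.

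The genuine gap is in the last step, where you try to verify the symmetric strongly shifted property directly by an exchange argument on the decomposition $\x=\y+\sum_i\sigma_i(\w)$. This argument, if it worked as stated, would prove that $I_\w^d$ is symmetric strongly shifted for \emph{every} $d\geq 1$ whenever $w_i-w_{i+1}\leq 1$ — but that conclusion is false. Take $\w=(2,1,0,0)$ in $n=4$ variables, which satisfies the no-gap hypothesis; Example~\ref{ex:reg-pows} shows $\reg(I_\w)=5\neq 3=|\w|$, so $I_\w=I_\w^1$ does not have a linear resolution and hence is not symmetric strongly shifted. Concretely, $e^{(2,1,0,0)}\in I_\w$ but $e^{(1,1,1,0)}=e^{(2,1,0,0)}\cdot e_3/e_1\notin I_\w$, and your proposed exchange of transpositions cannot save this: with $d=1$ there is only a single $\sigma_1$, and no sequence of transpositions applied to it can lower $x_1$ by $1$ while raising $x_3$ by $1$. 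The mechanism by which $d\gg 0$ enters is precisely what is missing from your plan. The paper's route is different and avoids this issue: using \cite[Theorem~1.7]{raicu-partitioning} (feasibility of the partitioning problem for $d\geq n$), it shows that for $d\gg 0$ the set $\X_\w^d$ consists of \emph{all} partitions of size $d|\w|$ dominated by $d\w$ in the dominance order, and then invokes \cite[Remark~1.3]{symmetric-shifted}, which says exactly that such a set of generators yields a symmetric strongly shifted ideal. The asymptotic partitioning result supplies the reverse inclusion (every dominated partition is a sum of $d$ permutations of $\w$) that a purely local exchange argument cannot.
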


\begin{example}\label{ex:reg-pows}
 Take $n=4$ and $\w=(2,1,0,0)$. We have that $k=2$, $h_1=2$, $h_2=1$, $a_0=0$, $a_1=a_2=1$, and in particular $b(\w)=0$. The Betti tables for $I_{\w},I_{\w}^2$ and $I_{\w}^3$ are given respectively by:
 \[
 \begin{array}{c|cccc}
      &0&1&2&3\\ \hline
      \text{3}&12&18&4&-\\ 
      \text{4}&-&-&4&-\\
      \text{5}&-&-&-&1\\
\end{array},
\qquad
 \begin{array}{c|cccc}
      &0&1&2&3\\ \hline
      \text{6}&64&152&117&24\\ 
      \text{7}&-&-&-&4\\
\end{array},
\quad\mbox{ and }\quad
 \begin{array}{c|cccc}
      &0&1&2&3\\ \hline
      \text{9}&180&474&420&125\\ 
\end{array}.
\]
One can check that in this case we have $I_{\w}^3 = I_{\X}$, where
\[ 
\begin{aligned}
\X = \{&(6,3),(5,4),(6,2,1),(5,3,1),(4^2,1),(5,2^2),(4,3,2),(3^3),\\
&(6,1^3),(5,2,1^2),(4,3,1^2),(4,2^2,1),(3^2,2,1),(3,2^3)\}.
\end{aligned}
\]
A more compact description of $\X$ is as the set of all partitions of size $9$ in $\P_4$ that are smaller in the dominance order than $(6,3)$. As explained in \cite[Remark~1.3]{symmetric-shifted}, this implies that $I_{\w}^3$ is symmetric strongly shifted. By contrast, we have that $I_{\w}^2 = I_{\Y}$ where
\[\Y = \{(4,2),(3^2),(3,2,1),(2^3),(3,1^3),(2^2,1^2)\}\]
which contains all the partitions of size $6$ in $\P_4$ dominated by $(4,2)$, except for $(4,1^2)$!
\end{example}

We let
\[ \X_{\w}^d = \left\{ \x\in\P_n : \mbox{ there exists }\s_j \in \S_n, j=1,\cdots,d,\mbox{ such that }\x=\sum_{j=1}^d \s_j(\w) \right\},\]
and note that $I_{\w}^d = I_{\X_{\w}^d}$ for all $d\geq 1$. In light of (\ref{eq:reg-pdim-IX}), we have to check that
\[\max \left\{ |\z| + l + 1 : (\z,l) \in \Z(\X_{\w}^d)\right\} = d\cdot |\w| + b(\w)\mbox{ for }d\gg 0.\]
Our strategy will be to translate the containment $(\z,l) \in \Z(\X_{\w}^d)$ into feasibility conditions on a high-multiplicity partitioning problem in Section~\ref{subsec:partitioning}. We then use the results of \cite{raicu-partitioning} to characterize such feasibility conditions and obtain a quick proof of (\ref{eq:reg-Iwd}) in Section~\ref{subsec:proof-reg-pows}. We prove the last assertion of Theorem~\ref{thm:reg-powers} in Section~\ref{subsec:linear-pows}.

\subsection{The relationship with partitioning problems}\label{subsec:partitioning}

Following \cite{raicu-partitioning}, we think of $\w$ as a tuple of \defi{ball-weights}, and for $d>0$ we consider a collection of $d\cdot n$ balls, with $d$ of weight $w_i$ for each $i=1,\cdots,n$. We encode them as elements of a multi-set
\[\mc{B} = \{w_1,\cdots,w_1,\cdots,w_i,\cdots,w_i,\cdots,w_n,\cdots,w_n\},\]
where each $w_i$ is repeated $d$ times. For a tuple $\ul{C}=(C_1,\cdots,C_n)$ of \defi{capacities} we consider the problem $\mf{BP}(d,\ul{C};\w)$ of partitioning $\mc{B}$ as
\begin{equation}\label{eq:partition-B}
\mc{B} = \mc{B}_1 \sqcup \cdots \sqcup \mc{B}_i \sqcup \cdots \sqcup \mc{B}_n,
\end{equation}
where each $\mc{B}_i$ has exactly $d$ elements, and
\begin{equation}\label{eq:constraints-partitioning}
w(\mc{B}_i):=\sum_{w\in\mc{B}_i} w \leq C_i,\mbox{ for }i=1,\cdots,n.
\end{equation}
Thinking of each $\mc{B}_i$ as a bin with capacity $C_i$, this is the same as assigning the balls to bins in such a way that each $\mc{B}_i$ is assigned $d$ balls without exceeding its capacity. We always assume that $\ul{C}$ (just as $\w$) is non-increasing ($C_1\geq\cdots\geq C_n$). A partition $\mc{B}_{\bullet}$ is said to be \defi{$r$-feasible} if 
\[w(\mc{B}_i)\leq C_i\mbox{ for }i=r+1,\cdots,n,\]
and it is \defi{feasible} if it satisfies (\ref{eq:constraints-partitioning}), that is, if it is $0$-feasible. If there exists an $r$-feasible partition $\mc{B}_{\bullet}$ then we say that the problem $\mf{BP}(d,\ul{C};\w)$ itself is \defi{$r$-feasible} (or \defi{feasible} when $r=0$). The goal of this section is to characterize the containment $(\z,l)\in\Z(\X_{\w}^d)$ in terms of feasibility conditions on $\mf{BP}(d,\z;\w)$. More precisely, we prove the following.

\begin{proposition}\label{prop:ZX-vs-feasibility}
 Suppose that $\z\in\P_n$ and $0\leq l<n$ is such that $z_1=\cdots=z_{l+1}$. We have that
 \[(\z,l)\in\Z(\X_{\w}^d) \Longleftrightarrow \mf{BP}(d,\z;\w)\mbox{ is $(l+1)$-feasible but not $l$-feasible.}\]
\end{proposition}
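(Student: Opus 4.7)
The plan is to build a dictionary between partitions in $\X_{\w}^d$ and feasible arrangements in $\mf{BP}(d,\z;\w)$, and then translate each of the two conditions in Definition~\ref{def:ZX} into the corresponding feasibility statement.

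First I would unpack what $\X_{\w}^d$ looks like operationally: each $\x\in\X_{\w}^d$ comes from a choice of permutations $\s_1,\ldots,\s_d\in\S_n$ via $x_{\pi(i)}=\sum_{j=1}^d w_{\s_j^{-1}(i)}$ for some sorting permutation $\pi$. If we set $\mc{B}_i=\{w_{\s_j^{-1}(i)}:j=1,\ldots,d\}$, we obtain a partition of the multiset $\mc{B}$ in the sense of (\ref{eq:partition-B}) with $|\mc{B}_i|=d$, and $w(\mc{B}_i)$ is exactly the unsorted tuple that sorts to $\x$. Conversely, any partition of $\mc{B}$ into blocks of size $d$ arises this way. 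The core technical step is then the following \emph{sorting lemma}: for any $r$ with $0\le r<n$, the problem $\mf{BP}(d,\z;\w)$ is $r$-feasible if and only if there exists $\x\in\X_{\w}^d$ with $x_i\le z_i$ for all $i\ge r+1$. The direction ``$\Leftarrow$'' is immediate by relabelling bins so that bin $i$ receives the $i$-th largest sum. The other direction is where some care is needed: starting from an $r$-feasible arrangement with unsorted sums $\y_i$ (so $\y_i\le z_i$ for $i\ge r+1$) and its sorted version $\x$, one shows that at most $i-1$ entries of $\y$ can exceed $z_i$ whenever $i\ge r+1$. The reason is that among $\y_{r+1},\ldots,\y_n$ only indices $j$ with $z_j>z_i$ contribute, and by monotonicity of $\z$ there are at most $i-r-1$ such $j\ge r+1$; combined with the (at most) $r$ unconstrained entries $\y_1,\ldots,\y_r$, this gives $\x_i\le z_i$ for every $i\ge r+1$.

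With the sorting lemma in hand, I would translate condition (1) of $(\z,l)\in\Z(\X_{\w}^d)$. Using $z_1=\cdots=z_{l+1}=c$ and $z_i\le c$ for $i\ge l+2$, the constraint $\x(c)\le\z$ is automatic in the first $l+1$ coordinates, while for $i\ge l+2$ it says $\min(x_i,c)\le z_i$, which together with $x'_{c+1}\le l+1$ (i.e.\ $x_{l+2}\le c$) reduces to $x_i\le z_i$ for $i\ge l+2$. Conversely, $x_i\le z_i\le c$ for $i\ge l+2$ forces $x'_{c+1}\le l+1$ and $\x(c)\le\z$. Thus condition (1) is equivalent, by the sorting lemma with $r=l+1$, to the $(l+1)$-feasibility of $\mf{BP}(d,\z;\w)$.

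Next I would translate condition (2). Under condition (1), an $\x\in\X_{\w}^d$ satisfying (1) has $x'_{c+1}\le l+1$, so ``$x'_{c+1}=l+1$'' is the same as ``$x_{l+1}\ge c+1$'', i.e.\ $x_{l+1}>z_{l+1}$. Thus the failure of condition (2) means there exists $\x\in\X_{\w}^d$ with $x_i\le z_i$ for $i\ge l+2$ \emph{and} $x_{l+1}\le z_{l+1}$, i.e.\ $x_i\le z_i$ for all $i\ge l+1$; by the sorting lemma with $r=l$, this is precisely the $l$-feasibility of $\mf{BP}(d,\z;\w)$. Putting the two translations together yields the equivalence
\[
(\z,l)\in\Z(\X_{\w}^d)\ \Longleftrightarrow\ \mf{BP}(d,\z;\w)\text{ is }(l+1)\text{-feasible but not }l\text{-feasible},
\]
as required. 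The main obstacle is the ``$\Rightarrow$'' direction of the sorting lemma above: one must convince oneself that sorting the bin-sums never destroys the capacity constraints on the last $n-r$ bins, which is where the monotonicity of $\z$ plays a crucial role.
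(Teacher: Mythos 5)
Your proof is correct and follows essentially the same route as the paper: translate membership $\x\in\X_{\w}^d$ into the existence of a bin arrangement, rewrite each of the two conditions in Definition~\ref{def:ZX} as coordinatewise inequalities $x_i\le z_i$ in a suitable range, and then read those off as $r$-feasibility of $\mf{BP}(d,\z;\w)$ for $r=l+1$ and $r=l$. Where you and the paper differ is only in which of the two implicit steps gets spelled out: the paper makes explicit the use of Hall's Marriage Theorem to recover the permutations $\s_1,\dots,\s_d$ from a given bin arrangement (a point you pass over with ``conversely, any partition of $\mc{B}$ into blocks of size $d$ arises this way''), whereas you make explicit, and correctly prove by a counting argument, the ``sorting lemma'' that relabelling the bins in non-increasing order of their weights preserves $r$-feasibility; the paper leaves that step implicit in the phrase ``it follows from the discussion in the previous paragraph.'' Both gaps are genuinely there and both are easy to fill, so the two write-ups are complementary rather than different in substance.
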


\begin{proof} We first note that the condition that $\x$ can be represented as
\[\x=\sum_{j=1}^d \s_j(\w) \mbox{ for permutations }\s_j\in\S_n,\ j=1,\cdots,d,\]
is equivalent to the condition that there exists a partition $\mc{B}_{\bullet}$ with each $\mc{B}_i$ containing exactly $d$ elements, and $w(\mc{B}_i)=x_i$ for $i=1,\cdots,n$. To see this, note that given permutations $\s_j\in\S_n$, we can form $\mc{B}_{\bullet}$ by placing for each $j=1,\cdots,d$ and $i=1,\cdots,n$, a ball of weight $w_i$ into the bin $\mc{B}_{\s_j(i)}$. Conversely, it follows from Hall's Marriage Theorem \cite{hall} that given $\mc{B}_{\bullet}$, we can find a permutation $\s_d\in\S_n$ with the property that $\mc{B}_{\s_d(i)}$ contains a ball of weight $w_i$ for all $i=1,\cdots,n$. Removing one such ball for each $i$ and applying induction on $d$ allows us to construct $\s_{d-1},\cdots,\s_1\in\S_n$ with the desired property.

We next let $c=z_1$ and observe that the conditions that $\z\geq\x(c)$ and $x'_{c+1}\leq l+1$ are equivalent to the inequalities $x_i\leq z_i$ for all $i>l+1$. Furthermore, we have $x_{l+1}\leq c=z_{l+1}$ precisely when $x'_{c+1}\leq l$. It follows from the discussion in the previous paragraph that condition (1) in Definition~\ref{def:ZX} is equivalent to the fact that $\mf{BP}(d,\z;\w)$ is $(l+1)$-feasible, while condition (2) is satisfied if and only if $\mf{BP}(d,\z;\w)$ is not $l$-feasible.
\end{proof}

We record one more fact to be used in the proof of the Theorem on Regularity of Powers. Given a tuple $\u\in\bb{Z}^n$ and an integer $1\leq r\leq n$ we write $\u^{\geq r}$ for the truncation $(u_r,u_{r+1},\cdots,u_n)$.

\begin{lemma}\label{lem:zl+1-large-l-feasible}
 If $0\leq l<n$ and $\mf{BP}(d,\z;\w)$ is $(l+1)$-feasible but not $l$-feasible, then $z_{l+1}< d\cdot w_{l+1}$.
\end{lemma}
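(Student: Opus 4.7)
I will prove the contrapositive: assuming $(l+1)$-feasibility together with $z_{l+1} \geq d \cdot w_{l+1}$, I produce an $l$-feasible partition, contradicting the failure of $l$-feasibility. The plan is to start from any $(l+1)$-feasible partition $\mc{B}_{\bullet}$, leave the bins $\mc{B}_{l+2}, \ldots, \mc{B}_n$ untouched, and then redistribute the $d(l+1)$ balls currently sitting in $\mc{B}_1 \cup \cdots \cup \mc{B}_{l+1}$ among those same $l+1$ bins (still $d$ balls each), arranging things so that the new $\mc{B}_{l+1}$ consists of the $d$ lightest of them.

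The key ingredient is a pigeonhole/counting step. The multiset $\mc{B}$ contains $d$ balls at each of the $n$ positions, and since $\w$ is non-increasing the balls at positions $l+1, l+2, \ldots, n$ all have weight $\leq w_{l+1}$; this gives $d(n-l)$ balls of weight $\leq w_{l+1}$ in $\mc{B}$. The bins $\mc{B}_{l+2}, \ldots, \mc{B}_n$ together contain only $d(n-l-1)$ balls, so at least $d$ balls of weight $\leq w_{l+1}$ must already lie in $\mc{B}_1 \cup \cdots \cup \mc{B}_{l+1}$. Consequently the $d$ lightest balls of $\mc{B}_1 \cup \cdots \cup \mc{B}_{l+1}$ each have weight at most $w_{l+1}$, so after relocating them to the new $\mc{B}_{l+1}$ we obtain
\[
w(\mc{B}_{l+1}) \leq d \cdot w_{l+1} \leq z_{l+1}.
\]
The remaining $dl$ balls can be distributed arbitrarily among $\mc{B}_1, \ldots, \mc{B}_l$ (with $d$ per bin); no capacity constraints on those bins enter into $l$-feasibility, and $\mc{B}_{l+2}, \ldots, \mc{B}_n$ were left alone, so the resulting partition is $l$-feasible, contradicting the hypothesis.

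I do not foresee a real obstacle here, since the entire argument reduces to a pigeonhole count once one has the freedom to rearrange the $d(l+1)$ balls in the first $l+1$ bins. The only minor subtlety is that the lower bound $d(n-l)$ on the number of balls of weight $\leq w_{l+1}$ should be obtained by counting balls at positions $l+1, \ldots, n$, which remains valid regardless of ties among the entries of $\w$.
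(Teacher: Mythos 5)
Your proof is correct, but it takes a genuinely different route from the paper. The paper first invokes a truncation lemma from the companion partitioning paper (\cite[Lemma~7.1]{raicu-partitioning}) to pass from $\mf{BP}(d,\z;\w)$ to the truncated problem $\mf{BP}(d,\z^{\geq l+1};\w^{\geq l+1})$, which is $1$-feasible but not feasible; after that reduction the first bin of a $1$-feasible partition consists of $d$ balls drawn from $\{w_{l+1},\ldots,w_n\}$, which \emph{all} have weight $\leq w_{l+1}$, so the bound $z_{l+1} < w(\mc{B}_1) \leq d\cdot w_{l+1}$ is immediate. You instead work directly with the full problem, prove the contrapositive, and use a pigeonhole count ($d(n-l)$ light balls versus $d(n-l-1)$ slots in the untouched bins) to locate $d$ balls of weight $\leq w_{l+1}$ inside $\mc{B}_1\cup\cdots\cup\mc{B}_{l+1}$, then manufacture an $l$-feasible partition by placing them into the new bin $l+1$. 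Your argument is self-contained and more elementary, avoiding the external reduction lemma; the paper's is shorter because the truncation step makes the ``all balls are light'' observation trivial, and because that lemma is reused elsewhere in the same section.
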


\begin{proof}
 Using \cite[Lemma~7.1]{raicu-partitioning} with $j=0$ we obtain that $\mf{BP}(d,\z^{\geq l+1};\w^{\geq l+1})$ is $1$-feasible but not feasible. Let $\mc{B}_1\sqcup\cdots\sqcup\mc{B}_{n-l}$ denote a $1$-feasible partition of the multi-set $\mc{B}=\{w_{l+1},\cdots,w_{l+1},\cdots,w_n,\cdots,w_n\}$, where each $w_i$ is repeated $d$ times for $i=l+1,\cdots,n$. We have that $w(\mc{B}_i) \leq z_{l+i}$ for $i=2,\cdots,n-l$, and since $\mc{B}_{\bullet}$ is not feasible by hypothesis, we have that $w(\mc{B}_{l+1}) > z_{l+1}$. Since every ball in the multi-set $\mc{B}$ has weight at most $w_{l+1}$, and since $\mc{B}_{l+1}$ contains exactly $d$ balls, it follows that $w(\mc{B}_{l+1})\leq d\cdot w_{l+1}$, from which the desired conclusion follows.
\end{proof}

\subsection{The linear function computing regularity of powers}\label{subsec:proof-reg-pows}
In \cite[Theorem~1.11]{raicu-partitioning} we show that for $d\geq n$ we can find $\z\in\P_n$ with $|\z|=d\cdot |\w| + b(\w)-1$ such that $\mf{BP}(d,\z;\w)$ is $1$-feasible but not $0$-feasible. Using Proposition~\ref{prop:ZX-vs-feasibility} it follows that $(\z,0)\in\Z(\X_{\w}^d)$, so by (\ref{eq:reg-pdim-IX}) we conclude that
\[\reg(I_{\w}^d) \geq |\z|+1 = d\cdot |\w| + b(\w)\mbox{ for }d\geq n.\]

Suppose by contradiction that $\reg(I_{\w}^d) > d\cdot |\w| + b(\w)$ for some $d\gg 0$. It follows from (\ref{eq:reg-pdim-IX}) that there exists $(\z,l)\in\Z(\X_{\w}^d)$ such that
\begin{equation}\label{eq:ineq-zl-dwb}
 |\z|+l\geq d\cdot|\w|+b(\w).
\end{equation}
By Proposition~\ref{prop:ZX-vs-feasibility} we have that $\mf{BP}(d,\z;\w)$ is $(l+1)$-feasible but not $l$-feasible. If $l=0$ then we know by \cite[Theorem~1.10]{raicu-partitioning} that if $|\z|\geq d\cdot|\w|+b(\w)$ and $\mf{BP}(d,\z;\w)$ is $1$-feasible, then $\mf{BP}(d,\z;\w)$ is also $0$-feasible. This is a contradiction, allowing us to assume that $l>0$. Applying \cite[Lemma~7.1]{raicu-partitioning} with $j=0$ we have that $\mf{BP}(d,\z^{\geq l+1};\w^{\geq l+1})$ is $1$-feasible but not $0$-feasible, so by \cite[Theorem~1.10]{raicu-partitioning} applied to $\mf{BP}(d,\z^{\geq l+1};\w^{\geq l+1})$ we conclude that (using the definition (\ref{eq:def-bw}) for the truncation $\w^{\geq l+1}$)
\[ z_{l+1}+\cdots+z_n < d\cdot(w_{l+1}+\cdots+w_n) + b(\w^{\geq l+1}).\]
We write $c=z_1$ and recall that by Remark~\ref{rem:z1--zl+1} we have $z_1=\cdots=z_{l+1}=c$. Adding $z_1+\cdots+z_l + l=l\cdot c+l$ to both sides of the inequality above and using (\ref{eq:ineq-zl-dwb}) we obtain
\[d\cdot|\w|+b(\w) \leq |\z|+l < d\cdot(w_{l+1}+\cdots+w_n) + b(\w^{\geq l+1}) + l\cdot c + l,\]
which after simplifications yields
\begin{equation}\label{eq:ineq-l-c-delta}
d\cdot(w_1+\cdots+w_l) < l\cdot c + \Delta,
\end{equation}
where $\Delta=b(\w^{\geq l+1})+l-b(\w)$ is some constant, depending only on $\w$ and $l$, but not on $d$. It follows from Lemma~\ref{lem:zl+1-large-l-feasible} that $c<d\cdot w_{l+1}$, so (\ref{eq:ineq-l-c-delta}) implies that
\[ d\cdot(w_1+\cdots+w_l - l\cdot w_{l+1}) < \Delta.\]
Since $w_1,\cdots,w_l\geq w_{l+1}$ and $d\gg 0$, this is only possible when $w_1=\cdots=w_{l+1}$, which forces $l<h_k$. This implies that $b(\w)-b(\w^{\geq l+1})=l\cdot(a_k-1)\geq 0$, and therefore $\Delta\leq l$. Combining this with the inequality $c\leq d\cdot w_{l+1}-1$, it follows from (\ref{eq:ineq-l-c-delta}) that
\[ d\cdot(w_1+\cdots+w_l) < l\cdot(d\cdot w_{l+1}-1) + \Delta = d\cdot l\cdot w_{l+1} + (\Delta-l) \leq d\cdot l\cdot w_{l+1},\]
contradicting the fact that $w_1=\cdots=w_l=w_{l+1}$ and concluding our proof of (\ref{eq:reg-Iwd}).

\subsection{Powers with a linear resolution}\label{subsec:linear-pows}
Since $I_{\w}^d$ is generated in degree $d\cdot|\w|$, it follows that it has a linear free resolution if and only if $\reg(I_{\w}^d)=d\cdot|\w|$. For $d\gg 0$, this is equivalent by (\ref{eq:reg-Iwd}) with the fact that $b(\w)=0$. Since $h_{t-1}-h_t>0$ for every $t=1,\cdots,k$, this is further equivalent to the fact that $a_1=\cdots=a_k=1$, which is finally equivalent to the requirement that $w_i-w_{i+1}\leq 1$ for all $i=1,\cdots,n-1$.

To finish the proof of Theorem~\ref{thm:reg-powers} we assume that $w_i-w_{i+1}\leq 1$ for all $i=1,\cdots,n-1$ and show that for $d\gg 0$, the set $\X_{\w}^d$ consists of all the partitions of size $d\cdot|\w|$ that are dominated by $d\cdot\w$. By \cite[Remark~1.3]{symmetric-shifted}, this is enough to conclude that $I_{\w}^d$ is symmetric strongly shifted. Choose any partition $\ul{C}\in\P_n$ with $|\ul{C}| = d\cdot |\w|$, which is dominated by $d\cdot\w$. This means that
\begin{equation}\label{eq:dominance}
 C_i + \cdots + C_n \geq d\cdot(w_i+\cdots+w_n)\mbox{ for all }i=1,\cdots,n.
\end{equation}
Notice that our assumption on $\w$ guarantees that $b(\w^{\geq i}) = 0$ for all $i=1,\cdots,n$, so by \cite[Theorem~1.7]{raicu-partitioning} we conclude that the partitioning problem $\mf{BP}(d,\ul{C};\w)$ is feasible. Let $\mc{B}_{\bullet}$ be a solution, and note that since
\[ d\cdot |\w| = C_1+\cdots+C_n\leq w(\mc{B}_1) + \cdots + w(\mc{B}_n) = w(\mc{B}) = d\cdot|\w|,\]
we must have equality throughout, which is possible only when $w(\mc{B}_i)=C_i$ for all $i=1,\cdots,n$. Using the dictionary between elements of $\X_{\w}^d$ and bin-weights established in the proof of Proposition~\ref{prop:ZX-vs-feasibility}, we conclude that $\ul{C}\in\X_{\w}^d$. Conversely, we have seen that any $\ul{C}\in\X_{\w}^d$ leads to a feasible problem $\mf{BP}(d,\ul{C};\w)$. Since the bins $\mc{B}_i,\cdots,\mc{B}_n$ must contain collectively a total of $d\cdot (n-i+1)$ balls, whose total weight can be no smaller than the sum of the smallest $d\cdot(n-i+1)$ elements of the multi-set $\mc{B}$, namely $d\cdot(w_i + \cdots + w_n)$, it follows that (\ref{eq:dominance}) must hold, that is, $\ul{C}$ is dominated by $d\cdot\w$.

\section{Varying the number of variables}\label{sec:vary-n}

Recall that for a subset $\X\subset\P$ we write
\[ \X_n = \{\x\in\X : \x\mbox{ has at most }n\mbox{ parts}\} \subseteq \P_n.\]
The goal of this section is to study $\reg(I_{\X_n})$ and $\pdim(I_{\X_n})$ as functions of $n$ when $n\gg 0$, recovering recent results of Murai \cite{murai}. More precisely, we show the following (the Theorem on Invariant Chains of Ideals).

\begin{theorem}\label{thm:reg-linear-n}
 Let $\X$ denote a finite non-empty set of pairwise incomparable partitions, and define
 \[ m = \max\{ i : x_i\neq 0\mbox{ for some }\x\in\X\},\quad w = \min\{x_1 : \ul{x}\in\mc{X}\},\quad\mbox{ and }\quad W = \max\{x_1 : \ul{x}\in\mc{X}\}.\]
 If we let $\Y = \{ \x-\x(w-1) : \x\in\X\}$, then we have the following.
\begin{enumerate}
 \item There exists a constant $C$ such that $\reg(I_{\Y_n})=C$ for $n\geq m$.
 \item We have $\reg(I_{\X_n}) = (w-1)\cdot n + C$ for $n\geq \max\bigl( m,(m-1)\cdot(W-w+2)-C\bigr)$.
\end{enumerate}
\end{theorem}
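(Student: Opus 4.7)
The crux of my plan is to exhibit a single element of $\Y$ that forces every $\z$ in $\Z(\Y_n)$ to be short. Fix $\x^{(0)}\in\X$ with $x^{(0)}_1=w$, and let $r=(x^{(0)})'_w$ be the number of parts of $\x^{(0)}$ equal to $w$; then $\x^{(0)}-\x^{(0)}(w-1)$ equals $(1^r)\in\Y$, with $1\leq r\leq m$. For any $(\z,l)\in\Z(\Y_n)$ with $c=z_1\geq 1$, the inequality $(1^r)\leq\z$ would place $(1^r)$ in $\{\y\in\Y:\y(c)\leq\z\}$ with $y'_{c+1}=0$, contradicting the fact that $l+1$ equals the minimum of $y'_{c+1}$ on this set (per Definition~\ref{def:ZX}). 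Hence some $z_i$ with $i\leq r\leq m$ must vanish, and monotonicity yields $z_m=0$; the case $c=0$ forces $\z=\emptyset$ directly. Since coordinates of $\z$ beyond position $m$ do not affect any condition in Definition~\ref{def:ZX}, padding by zeros identifies $\Z(\Y_m)\simeq\Z(\Y_n)$ for every $n\geq m$, and $\reg(I_{\Y_n})$ equals a single constant $C$.

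\noindent\textbf{Part (2), lower bound.} Choose $(\bar{\z},\bar{l})\in\Z(\Y_n)$ with $|\bar{\z}|+\bar{l}+1=C$, set $l=\bar{l}$, and define $z_i=\bar{z}_i+(w-1)$ for $i=1,\ldots,n$. Writing $\bar{\x}:=\x-\x(w-1)\in\Y$ for $\x\in\X$ and $c=z_1=\bar{c}+(w-1)$, the identities
\[
\min(x_i,c)-(w-1)=\min(\bar{x}_i,\bar{c})\ \ \text{when}\ x_i\geq w-1,\qquad \min(x_i,c)\leq w-1\leq z_i\ \ \text{when}\ x_i<w-1
\]
let me match condition (1) for $(\z,l)$ with condition (1) for $(\bar{\z},l)$ via $\x\leftrightarrow\bar{\x}$, and symmetrically deduce condition (2) for $(\z,l)$ from condition (2) for $(\bar{\z},l)$. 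Since $|\z|+l+1=(w-1)n+C$, this gives $\reg(I_{\X_n})\geq (w-1)n+C$.

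\noindent\textbf{Part (2), upper bound.} Fix $(\z,l)\in\Z(\X_n)$, set $c=z_1$ and $p=\#\{i:z_i\geq w-1\}$, and note $l+1\leq p$ whenever $c\geq w-1$. I analyze four cases. (a) If $c<w-1$, then $|\z|+l+1\leq (w-2)n+m\leq (w-1)n+C$ for $n\geq m-C$. (b) If $c\geq w-1$ and $p=n$, the identities above run in reverse to produce $(\bar{\z},l)\in\Z(\Y_n)$ with $\bar{z}_i=z_i-(w-1)$, yielding $|\z|+l+1\leq (w-1)n+C$. (c) If $c\geq w-1$, $p<n$, and $p\geq m$, the truncation $\z':=(z_1,\ldots,z_p)\in\P_p$ satisfies $(\z',l)\in\Z(\X_p)=\Z(\X)$ and falls under case (b) applied to $\P_p$, so $|\z'|+l+1\leq (w-1)p+C$; combined with $\sum_{i>p}z_i\leq (n-p)(w-2)$ this gives $|\z|+l+1\leq (w-2)n+p+C\leq (w-1)n+C-1$. (d) If $c\geq w-1$, $p<n$, and $p<m$, then $l+1\leq p\leq m-1$ and $c\leq W-1$ combine into
\[
|\z|+l+1\leq pc+(n-p)(w-2)+(m-1)\leq (w-2)n+(m-1)(W-w+2),
\]
which is $\leq (w-1)n+C$ precisely when $n\geq (m-1)(W-w+2)-C$. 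The four cases together establish the upper bound on the stated range.

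\noindent\textbf{Main obstacle.} The most delicate step is case (b), where condition (2) of $\Z(\X_n)$ transfers from $\Z(\Y_n)$ only because every $z_i\geq w-1$ automatically yields $\x'(c)_i\leq z_i$ when $x'_i<w-1$ (that is, when $\bar{x}'_i=0$ carries no information). Once some $z_i<w-1$ this clean transfer breaks: an $\x'\in\X$ with $x'_i\geq w$ can contribute to condition (1) for $(\bar{\z},l)$ without satisfying condition (1) for $(\z,l)$. This is why Case B2 ($p<n$) must be handled separately, by truncation to $\X_p$ in case (c) or by a direct count in case (d); the latter is what produces the effective threshold $(m-1)(W-w+2)$ appearing in the theorem.
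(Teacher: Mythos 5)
Your proof is correct and follows essentially the same route as the paper's. Part~(1) is the paper's Lemma~\ref{lem:square-free-Y}/Corollary~\ref{cor:square-free-Y} in slightly different dress: you locate $(1^r)\in\Y$ coming from an $\x\in\X$ with $x_1=w$ and use it to cap the number of parts of any $\z$ in $\Z(\Y_n)$. The lower bound in Part~(2) and the identities you write out are exactly the content of the paper's Lemma~\ref{lem:zx-vs-uy} together with the construction $\z=((w-1)^n)+\u$, and the transfer of conditions (1) and (2) is verified the same way.

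Where you diverge from the paper is in the organization of the upper bound. The paper splits only on whether $z_m\geq w-1$ or $z_m\leq w-2$; when $z_m\geq w-1$, Lemma~\ref{lem:zx-vs-uy} applies directly with $\u=\z-\z(w-1)$ and the crucial point is that $\lvert\z\rvert\leq n(w-1)+\lvert\u\rvert$ holds as an inequality (not just equality), so no restriction to $p=n$ and no truncation to $\P_p$ is needed — your cases (b) and (c) are handled by one argument. Your truncation in case (c) is therefore superfluous, though it is logically sound (and you correctly check that $\X_p=\X_n$, that $l\leq p-1$, and that the constant in $\P_p$ is again $C$ by Part~(1)). On the other side, your explicit case (a) for $c<w-1$ is a genuine refinement: the paper's claim $l+1\leq m-1$ in the $z_m\leq w-2$ branch really uses $c\geq w-1$ (via $l+1\leq z_c'\leq z_{w-1}'\leq m-1$), and when $c\leq w-2$ one only gets $l+1\leq m$; the paper's displayed inequality is then off by $1$ when $m=1$, though the final theorem survives because $n(w-2)+m\leq n(w-1)+C$ for $n\geq m$ (as $C\geq 1$). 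Your case split makes this transparent. Your case (d) is the same estimate as the paper's $z_m\leq w-2$ computation, just packaged with $p$ in place of the conjugate-partition bookkeeping $z_i'$. Overall: same proof strategy, more cases, with a cleaner treatment of the small-$c$ corner and an unnecessary but harmless truncation step.
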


\begin{example}\label{ex:regIn-sharp}
 Both conclusions in Theorem~\ref{thm:reg-linear-n} are sharp. 
 \begin{itemize}
 \item For (1), consider $\X=\{(1^m)\}$, so that $I_{\X_n}=S$ for $n<m$ has regularity $0$, and $I_{\X_n}$ is the ideal generated by all square-free monomials of degree $m$ when $n\geq m$, whose regularity is $C=m$.
 \item For (2), consider $\X=\{(2,1^{m-1}),(W^{m-1})\}$, so that $I_{\Y_n}$ is the maximal ideal of $S$, whose regularity is $C=1$. Note also that $w=2$. It can be checked that
 \[ \reg(I_{\X_n}) = W\cdot(m-1) > n + 1\mbox{ for }m\leq n < (m-1)\cdot W-1,\]
 and that $(\z,l)=\bigl( (W-1)^{m-1},m-2\bigr)\in\Z(\X_n)$ provides the maximal value for $|\z|+l+1$ in (\ref{eq:reg-pdim-IX}). One can also check that $\reg(I_{\X_n})=n+1$ for $n\geq(m-1)\cdot W-1$, with $(\z,l) = \bigl((1^n),0\bigr)\in\Z(\X_n)$ maximizing $|\z|+l+1$ in (\ref{eq:reg-pdim-IX}). For a specific example, take $m=W=3$: using Macaulay2, one has that for $n=4$, the Betti table of $I_{\Y_n}$ is
 \[
 \begin{array}{c|cccc}
      &0&1&2&3\\ \hline
      \text{4}&12&20&10&-\\ 
      \text{5}&-&-&-&1\\
      \text{6}&6&12&6&-\\
\end{array}
\]
so the regularity is $6>n+1$. If we take instead $n=5$ then the Betti table is
\[
 \begin{array}{c|ccccc}
      &0&1&2&3&4\\ \hline
      \text{4}&30&70&55&10&-\\ 
      \text{5}&-&-&-&5&-\\
      \text{6}&10&30&30&10&1\\
\end{array}
\]
so the regularity is $6=n+1$.
 \end{itemize}
\end{example}

We begin by noting that $\x-\x(w-1)$ is the partition obtained from $\x$ by removing the first $(w-1)$ columns in its Young diagram. More precisely, we have for every partition $\u\in\P$ and $r\geq 0$ that
\begin{equation}\label{eq:u-ur}
\v=\u-\u(r) \Longleftrightarrow v'_i = u'_{i+r} \mbox{ for all }i\geq 1.
\end{equation}

\begin{lemma}\label{lem:square-free-Y}
 Suppose that $\Y\subset\P_n$ is a set of partitions containing $(1^p)$ for some $p\leq n$. For every $(\z,l)\in\Z(\Y)$ we have that $z_p=0$, that is, $\ul{z}$ has at most $(p-1)$ parts.
\end{lemma}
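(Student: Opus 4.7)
The plan is to test the defining conditions of $\Z(\Y)$ directly against the partition $\x=(1^p)$, which lies in $\Y$ by hypothesis. The key observation I will exploit is that the conjugate of $(1^p)$ is $(p,0,0,\ldots)$, so for every $c\geq 1$ one has $x'_{c+1}=0$, the smallest value $x'_{c+1}$ can take. This will make condition~(2) in Definition~\ref{def:ZX} incompatible with condition~(1) as soon as $(1^p)$ fits under $\z$.

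Concretely, I would argue by contradiction. The case $z_1=0$ is trivial: then $\z=\emptyset$ and $z_p=0$ automatically. So assume $c:=z_1\geq 1$ and, toward a contradiction, that $z_p\geq 1$. Dominance of $\z$ gives $z_i\geq 1$ for all $i\leq p$, and since $c\geq 1$ one has $\x(c)=(1^p)$, so $\x(c)\leq\z$. Together with $x'_{c+1}=0\leq l+1$ this shows that $\x=(1^p)$ satisfies condition~(1) of Definition~\ref{def:ZX} for $(\z,l)$. Condition~(2) then forces $x'_{c+1}=l+1\geq 1$, contradicting $x'_{c+1}=0$.

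There is no real obstacle here; the only step that requires any thought is recognizing that the natural test element is precisely $(1^p)$, whose single column of height $p$ is exactly what allows $(1^p)\leq \z$ to certify $z_p\geq 1$ via condition~(1) while simultaneously forcing $x'_{c+1}=0$ and hence violating condition~(2). Everything else is a direct unpacking of Definition~\ref{def:ZX}.
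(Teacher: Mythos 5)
Your proof is correct and follows essentially the same route as the paper's: both argue by contradiction, use $\x=(1^p)\in\Y$ as the test partition, observe that $z_p\geq 1$ and $c=z_1\geq 1$ give $\x(c)=(1^p)\leq\z$ with $x'_{c+1}=0$, so $\x$ satisfies condition~(1) of Definition~\ref{def:ZX} while condition~(2) would force $l+1=x'_{c+1}=0$, which is impossible since $l\geq 0$.
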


\begin{proof}
 Suppose by contradiction that there exists $(\z,l)\in\Z(\Y)$ such that $z_p\neq 0$, and let $c=z_1$. Since $\z$ is not the empty partition, it follows that $c\geq 1$, so if we let $\y=(1^p)$ then $\y=\y(c)$ and $y'_{c+1}=0$. Moreover, since $z_p\neq 0$ we have that $\z\geq\y=\y(c)$, and since $y'_{c+1}<l+1$, condition (2) in Definition~\ref{def:ZX} is violated. This gives the desired contradiction, concluding the proof.
\end{proof}

\begin{corollary}\label{cor:square-free-Y}
 If $\Y\subset\P$ is a set of partitions containing $(1^p)$ then $\Z(\Y_n)$ is independent on $n$ for $n\geq p$. In particular, $\reg(I_{\Y_n})$ is constant for $n\geq p$.
\end{corollary}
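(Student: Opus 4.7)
The plan is to exploit Lemma~\ref{lem:square-free-Y}, which asserts that for every $(\z,l)\in\Z(\Y_n)$ (with $n\geq p$ so that $(1^p)\in\Y_n$) the partition $\z$ satisfies $z_p=0$, i.e., has at most $p-1$ parts. Building on this, the main claim to establish is that for $n\geq p$ the set $\Z(\Y_n)$ is completely determined by the partitions in $\Y$ with at most $p$ parts; equivalently, that the partitions in $\Y_n\setminus\Y_p$ (which for $n>p$ have strictly more than $p$ parts) are irrelevant to the two defining conditions of Definition~\ref{def:ZX}. The natural case split is on whether $c:=z_1\geq 1$ or $c=0$.

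In the case $c\geq 1$, I would show that any $\x\in\Y_n$ that can witness condition (1), or that is subject to condition (2), automatically lies in $\Y_{p-1}$. Indeed, such an $\x$ satisfies $\x(c)\leq\z$; since $z_p=0$, this forces $\min(x_p,c)=0$, and the assumption $c\geq 1$ then gives $x_p=0$, so $\x$ has at most $p-1$ non-zero parts. Consequently both the existential clause (1) and the universal clause (2) quantify only over $\Y_{p-1}\subseteq\Y_n$, and the truth of ``$(\z,l)\in\Z(\Y_n)$'' depends only on $\Y_{p-1}$, not on $n$.

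In the case $c=0$ we have $\z=\emptyset$, and condition (1) reduces to the existence of $\x\in\Y_n$ with $x'_1\leq l+1$ while condition (2) says every such $\x$ satisfies $x'_1=l+1$. Together these collapse to the single equality $l+1=\min\{x'_1:\x\in\Y_n\}$. Because $(1^p)\in\Y_n$, this minimum is at most $p$, so any minimizer has at most $p$ non-zero parts and hence already lies in $\Y_p$. Moreover, enlarging $n$ to $n+1$ only introduces partitions with exactly $n+1>p$ parts, i.e., with $x'_1=n+1>p$, which cannot decrease the minimum. Thus $\min\{x'_1:\x\in\Y_n\}=\min\{x'_1:\x\in\Y_p\}$ for every $n\geq p$, and the unique admissible $l$ in this case is constant in $n$.

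Combining the two cases shows that $\Z(\Y_n)$ is the same set of pairs for all $n\geq p$, and the constancy of $\reg(I_{\Y_n})$ then follows immediately from the formula in~(\ref{eq:reg-pdim-IX}). I do not anticipate any real obstacle: once Lemma~\ref{lem:square-free-Y} is in hand the entire argument is bookkeeping, and the only small subtlety is the bifurcation at $c=0$, where the conditions degenerate and must be handled separately to identify the unique surviving element $(\emptyset,l)$.
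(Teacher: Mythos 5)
Your proposal is correct, and it rests on the same key ingredient as the paper's proof, namely Lemma~\ref{lem:square-free-Y}. The difference is one of bookkeeping: the paper first reduces (without loss of generality) to the case where $\Y$ consists of pairwise incomparable partitions with $p$ minimal such that $(1^p)\in\Y$; under that assumption every partition in $\Y$ has at most $p$ parts, so $\Y_p=\Y_n$ as subsets of $\P$, and then Lemma~\ref{lem:square-free-Y} is only needed to certify that any $(\z,l)\in\Z(\Y_n)$ already has $\z\in\P_p$. You instead work with an arbitrary $\Y$ and verify directly that both clauses of Definition~\ref{def:ZX} are determined by $\Y_p$: when $c\geq 1$ the condition $\x(c)\leq\z$ combined with $z_p=0$ forces any relevant $\x$ to lie in $\Y_{p-1}$, and when $c=0$ membership collapses to the single equation $l+1=\min\{x'_1:\x\in\Y_n\}$, which stabilizes for $n\geq p$. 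Your route sidesteps the need to justify that $\Z(\X)$ is unchanged when redundant (comparable) partitions are removed from $\X$ and that one may shrink $p$, at the modest cost of the extra $c=0$ versus $c\geq 1$ split; the paper's route is shorter once the normalization is accepted. Both are sound.
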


\begin{proof}
 We may assume without loss of generality that $\Y$ consists of pairwise incomparable partitions, and that $p$ is minimal with the property that $(1^p)\in\Y$. Any partition $\y$ with more than $p$ parts has the property that $\y\geq(1^p)$, so $\y\not\in\Y$. This implies that $\Y_p=\Y_n$, and therefore $\Z(\Y_p)\subseteq\Z(\Y_n)$ for all $n\geq p$. To prove equality we need to show that for every $(\z,l)\in\Z(\Y_n)$ we have $\z\in\P_p$, which follows from Lemma~\ref{lem:square-free-Y}. The fact that $\reg(I_{\Y_n})$ is constant for $n\geq p$ follows now from (\ref{eq:reg-pdim-IX}).
\end{proof}

\begin{lemma}\label{lem:zx-vs-uy}
 Suppose that $n\geq m$ and let $\z\in\P_n$ with $z_m\geq w-1$. Let $\u=\z-\z(w-1)$, $c=z_1$ and $d=u_1=c-(w-1)$. Consider $\x\in\X_n$ and let $\y=\x-\x(w-1)\in\Y_n$. We have that $x'_{c+1}=y'_{d+1}$ and
 \[\z\geq\x(c) \Longleftrightarrow \u\geq\y(d).\]
\end{lemma}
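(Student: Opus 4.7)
The plan is to reduce both the equality $x'_{c+1}=y'_{d+1}$ and the equivalence $\z\geq\x(c)\Leftrightarrow \u\geq\y(d)$ to statements about the conjugate partitions, via the column identity~(\ref{eq:u-ur}). By that identity, $\y=\x-\x(w-1)$ gives $y'_i=x'_{i+w-1}$ and $\u=\z-\z(w-1)$ gives $u'_i=z'_{i+w-1}$ for all $i\geq 1$. Since $d+1=c-w+2$, the first claim $x'_{c+1}=y'_{d+1}$ is immediate.

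For the second claim, I would use the standard fact that for partitions $\ll,\mu$ the componentwise inequality $\ll\geq\mu$ is equivalent to $\ll'_j\geq\mu'_j$ for all $j$, together with the observation that $\x(c)'_j=x'_j$ for $j\leq c$ and $\x(c)'_j=0$ for $j>c$ (since $\x(c)$ is the Young diagram of $\x$ restricted to its first $c$ columns). Therefore
\[\z\geq\x(c)\ \Longleftrightarrow\ z'_j\geq x'_j\ \text{for all }j=1,\dots,c,\]
and similarly
\[\u\geq\y(d)\ \Longleftrightarrow\ u'_j\geq y'_j\ \text{for all }j=1,\dots,d\ \Longleftrightarrow\ z'_k\geq x'_k\ \text{for all }k=w,\dots,c,\]
after the substitution $k=j+w-1$. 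The forward implication then follows by restricting the range of $k$.

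The main (and only) substantive point is the reverse direction: I must recover $z'_k\geq x'_k$ for $k=1,\dots,w-1$. Here I would invoke the hypotheses. Since $\x\in\X_n$ and every partition in $\X$ has at most $m$ parts by the definition of $m$, we have $x'_k\leq m$ for every $k\geq 1$. On the other hand, the hypothesis $z_m\geq w-1$ means exactly that $z'_{w-1}\geq m$, and $z'$ is non-increasing, so $z'_k\geq z'_{w-1}\geq m\geq x'_k$ for all $k\leq w-1$. Combined with the conclusion from the right-hand side, this yields $z'_k\geq x'_k$ for all $k=1,\dots,c$, hence $\z\geq\x(c)$. The case $w=1$ is trivial since then the extra range $k\leq w-1$ is empty.

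No serious obstacle is expected; the proof is essentially bookkeeping in conjugate coordinates. The one place care is needed is aligning the indexing shift by $(w-1)$ and verifying that the bound on the number of parts of elements of $\X$ is precisely what makes the hypothesis $z_m\geq w-1$ suffice to close the reverse implication.
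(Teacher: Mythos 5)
Your proof is correct and follows essentially the same path as the paper: both translate the inequalities into conjugate coordinates via (\ref{eq:u-ur}), observe that $z_m\geq w-1$ forces $z'_i\geq m\geq x'_i$ for $i\leq w-1$ (since members of $\X$ have at most $m$ parts), and thereby reduce the comparison to the range $w\leq i\leq c$, which shifts exactly onto the condition $\u\geq\y(d)$.
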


\begin{proof}
 Note that by (\ref{eq:u-ur}) we have $y'_{d+1}=x'_{d+1+w-1}=x'_{c+1}$, proving the first assertion. Moreover, we have
 \begin{equation}\label{eq:zgeqxc-equiv}
 \z\geq\x(c) \Longleftrightarrow z'_i\geq x'_i\mbox{ for }i\leq c.
 \end{equation}
 Since $z_m\geq w-1$ we get that $z'_i\geq m$ for $i\leq w-1$. Since $\x$ has at most $m$ parts, $x'_i\leq m$ for all $i$, and in particular $x'_i\leq z'_i$ for $i\leq w-1$. We obtain
 \[\z\geq\x(c) \overset{(\ref{eq:zgeqxc-equiv})}{\Longleftrightarrow}z'_i\geq x'_i\mbox{ for }w\leq i\leq c \overset{(\ref{eq:u-ur})}{\Longleftrightarrow}u'_{i-w+1}\geq y'_{i-w+1}\mbox{ for }w\leq i\leq c \Longleftrightarrow u'_j\geq y'_j\mbox{ for }j\leq d,\]
 which in turn is equivalent to $\u\geq\y(d)$ by the analogue of (\ref{eq:zgeqxc-equiv}), concluding the proof.
\end{proof}

\begin{proof}[Proof of Theorem~\ref{thm:reg-linear-n}]
 To verify conclusion (1), we consider $\x\in\X$ with $x_1=w$ and let $\y=\x-\x(w-1)\in\Y$. Since $y_1=1$ it follows that $\y=(1^p)$ for some $p$, and since $\x$ has at most $m$ parts, we get $p\leq m$. It follows from Corollary~\ref{cor:square-free-Y} that $\reg(I_{\Y_n})$ is constant for $n\geq p$, and in particular for $n\geq m$, as desired.

To prove conclusion (2), we verify that
\begin{equation}\label{eq:squeeze-reg-IX}
 n\cdot(w-1)+C \leq \reg(I_{\X_n}) \leq \max\bigl(n\cdot(w-1)+C,n\cdot(w-2)+(m-1)\cdot(W-w+2)\bigr)\mbox{ for }n\geq m.
\end{equation}
When $n\geq (m-1)\cdot(W-w+2)-C$ we get that $n\cdot(w-1)+C\geq n\cdot(w-2)+(m-1)\cdot(W-w+2)$, which together with (\ref{eq:squeeze-reg-IX}) implies $\reg(I_{\X_n})=n\cdot(w-1)+C$, as desired. 

To prove the first inequality in (\ref{eq:squeeze-reg-IX}) we let $(\u,l)\in\Z(\Y_n)$ such that $C=\reg(I_{\Y_n}) = |\u|+l+1$ and define
\[\z=\bigl((w-1)^n\bigr) + \u.\]
We let $d=u_1$, $c=z_1=d+(w-1)$. Since $n\geq m$, we have $z_m\geq w-1$, and since every $\y\in\Y_n$ has the form $\y=\x-\x(w-1)$ for some $\x\in\X_n$, it follows from Lemma~\ref{lem:zx-vs-uy} that
\begin{equation}\label{eq:equiv-ZX-ZY}
 (\z,l)\in\Z(\X_n) \Longleftrightarrow (\u,l)\in\Z(\Y_n),
\end{equation}
and in particular
\[ \reg(I_{\X_n}) \geq |\z|+l+1 = n\cdot(w-1)+|\u|+l+1 = n\cdot(w-1) + C.\]
For the second inequality in (\ref{eq:squeeze-reg-IX}) we choose $(\z,l)\in\Z(\X_n)$ with $\reg(I_{\X_n}) = |\z|+l+1$, and let $\u=\z-\z(w-1)$. If $z_m\geq w-1$ then it follows as before from Lemma~\ref{lem:zx-vs-uy} that (\ref{eq:equiv-ZX-ZY}) holds, so
\[ \reg(I_{\X_n}) = |\z|+l+1 \leq n\cdot(w-1)+|\u|+l+1 \leq n\cdot(w-1) + C.\]
If $z_m\leq w-2$ then $z'_i\leq m-1$ for $i\geq w-1$. Since $x'_{c+1}=l+1$ we get that $x_1\geq c+1$ so $c\leq W-1$, and moreover we have $l+1\leq m-1$. This yields
\[ \reg(I_{\X_n}) = |\z| + l + 1 = \left(\sum_{i=1}^{w-2} z_i'\right) + \left(\sum_{i=w-1}^{W-1} z_i'\right) + (l+1) \leq n\cdot(w-2) + (m-1)\cdot(W-w+1) + (m-1),\]
proving the second inequality in (\ref{eq:squeeze-reg-IX}) and concluding the proof.
\end{proof}

\section*{Acknowledgments} 
The author would like to thank Eric Ramos for interesting discussions that started this project, and Satoshi Murai for suggesting some of the questions studied here, and for very useful comments and corrections on earlier versions of the manuscript. Experiments with the computer algebra software Macaulay2 \cite{M2} have provided numerous valuable insights. The author acknowledges the support of the Alfred P. Sloan Foundation, and of the National Science Foundation Grant No.~1901886.


	\begin{bibdiv}
		\begin{biblist}

\bib{AH}{article}{
   author={Aschenbrenner, Matthias},
   author={Hillar, Christopher J.},
   title={Finite generation of symmetric ideals},
   journal={Trans. Amer. Math. Soc.},
   volume={359},
   date={2007},
   number={11},
   pages={5171--5192},
}

\bib{ban}{article}{
   author={Banerjee, Arindam},
   title={The regularity of powers of edge ideals},
   journal={J. Algebraic Combin.},
   volume={41},
   date={2015},
   number={2},
   pages={303--321},
}

\bib{bayer-sturmfels}{article}{
   author={Bayer, Dave},
   author={Sturmfels, Bernd},
   title={Cellular resolutions of monomial modules},
   journal={J. Reine Angew. Math.},
   volume={502},
   date={1998},
   pages={123--140},
}

\bib{BHT}{article}{
   author={Beyarslan, Selvi},
   author={H\`a, Huy T\`ai},
   author={Trung, Tr\^{a}n Nam},
   title={Regularity of powers of forests and cycles},
   journal={J. Algebraic Combin.},
   volume={42},
   date={2015},
   number={4},
   pages={1077--1095},
}

\bib{symmetric-shifted}{article}{
   author={Biermann, Jennifer},
   author={De Alba, Hern\'an},
   author={Galetto, Federico},
   author={Murai, Satoshi},
   author={Nagel, Uwe},
   author={O'Keefe, Augustine},
   author={R\"omer, Tim},
   author={Seceleanu, Alexandra},
   title={Betti numbers of symmetric shifted ideals},
   journal = {arXiv},
   number = {1907.04288},
   date={2019}
}

\bib{cohen}{article}{
   author={Cohen, D. E.},
   title={On the laws of a metabelian variety},
   journal={J. Algebra},
   volume={5},
   date={1967},
   pages={267--273},
}

\bib{conca-herzog}{article}{
   author={Conca, Aldo},
   author={Herzog, J\"{u}rgen},
   title={Castelnuovo-Mumford regularity of products of ideals},
   journal={Collect. Math.},
   volume={54},
   date={2003},
   number={2},
   pages={137--152},
}

\bib{cutkosky-herzog-trung}{article}{
   author={Cutkosky, S. Dale},
   author={Herzog, J{\"u}rgen},
   author={Trung, Ng{\^o} Vi{\^e}t},
   title={Asymptotic behaviour of the Castelnuovo-Mumford regularity},
   journal={Compositio Math.},
   volume={118},
   date={1999},
   number={3},
   pages={243--261},
}

\bib{deconcini-eisenbud-procesi}{article}{
   author={De Concini, C.},
   author={Eisenbud, David},
   author={Procesi, C.},
   title={Young diagrams and determinantal varieties},
   journal={Invent. Math.},
   volume={56},
   date={1980},
   number={2},
   pages={129--165},
}

\bib{EMS}{article}{
   author={Eisenbud, David},
   author={Musta{\c t}{\uuu a}, Mircea},
   author={Stillman, Mike},
   title={Cohomology on toric varieties and local cohomology with monomial
   supports},
   note={Symbolic computation in algebra, analysis, and geometry (Berkeley,
   CA, 1998)},
   journal={J. Symbolic Comput.},
   volume={29},
   date={2000},
   number={4-5},
   pages={583--600},
}

\bib{fulton}{book}{
   author={Fulton, William},
   title={Young tableaux},
   series={London Mathematical Society Student Texts},
   volume={35},
   note={With applications to representation theory and geometry},
   publisher={Cambridge University Press, Cambridge},
   date={1997},
   pages={x+260},
   isbn={0-521-56144-2},
   isbn={0-521-56724-6},
   review={\MR{1464693}},
}

\bib{galetto}{article}{
   author={Galetto, Federico},
   title={On the ideal generated by all squarefree monomials of a given degree},
   journal = {arXiv},
   number = {1609.06396},
   date={2016}
}

\bib{M2}{article}{
          author = {Grayson, Daniel R.},
          author = {Stillman, Michael E.},
          title = {Macaulay 2, a software system for research
                   in algebraic geometry},
          journal = {Available at \url{http://www.math.uiuc.edu/Macaulay2/}}
        }

\bib{hall}{article}{
   author={Hall, P.},
   title={On Representatives of Subsets},
   journal={J. London Math. Soc.},
   volume={10},
   date={1935},
   number={1},
   pages={26--30},
}

\bib{HS}{article}{
   author={Hillar, Christopher J.},
   author={Sullivant, Seth},
   title={Finite Gr\"{o}bner bases in infinite dimensional polynomial rings and
   applications},
   journal={Adv. Math.},
   volume={229},
   date={2012},
   number={1},
   pages={1--25},
}

\bib{kodiyalam}{article}{
   author={Kodiyalam, Vijay},
   title={Asymptotic behaviour of Castelnuovo-Mumford regularity},
   journal={Proc. Amer. Math. Soc.},
   volume={128},
   date={2000},
   number={2},
   pages={407--411},
}

\bib{kumar-kumar}{article}{
   author={Kumar, Ashok},
   author={Kumar, Chanchal},
   title={Multigraded Betti numbers of multipermutohedron ideals},
   journal={J. Ramanujan Math. Soc.},
   volume={28},
   date={2013},
   number={1},
   pages={1--18},
}

\bib{LNNR1}{article}{
   author={Le, Dinh Van},
   author={Nagel, Uwe},
   author={Nguyen, Hop D.},
   author={R\"{o}mer, Tim},
   title={Castelnuovo-Mumford regularity up to symmetry},
   journal = {arXiv},
   number = {1806.00457},
   date={2018}
}

\bib{LNNR2}{article}{
   author={Le, Dinh Van},
   author={Nagel, Uwe},
   author={Nguyen, Hop D.},
   author={R\"{o}mer, Tim},
   title={Codimension and Projective Dimension up to Symmetry},
   journal = {arXiv},
   number = {1809.06877},
   date={2018}
}

\bib{mil-stu}{book}{
   author={Miller, Ezra},
   author={Sturmfels, Bernd},
   title={Combinatorial commutative algebra},
   series={Graduate Texts in Mathematics},
   volume={227},
   publisher={Springer-Verlag, New York},
   date={2005},
   pages={xiv+417},
}

\bib{murai}{article}{
   author={Murai, Satoshi},
   title={Betti tables of monomial ideals fixed by permutations of the variables},
   journal = {arXiv},
   number = {1907.09727},
   date={2019}
}

\bib{mustata}{article}{
   author={Musta\c{t}\uuu{a}, Mircea},
   title={Local cohomology at monomial ideals},
   note={Symbolic computation in algebra, analysis, and geometry (Berkeley,
   CA, 1998)},
   journal={J. Symbolic Comput.},
   volume={29},
   date={2000},
   number={4-5},
   pages={709--720},
   label={Mus00}
}

\bib{nevo-peeva}{article}{
   author={Nevo, Eran},
   author={Peeva, Irena},
   title={$C_4$-free edge ideals},
   journal={J. Algebraic Combin.},
   volume={37},
   date={2013},
   number={2},
   pages={243--248},
}

\bib{perlman}{article}{
   author={Perlman, Michael},
   title={Regularity and cohomology of Pfaffian thickenings},
   journal = {arXiv},
   number = {1711.02777},
   date={2017}
}

\bib{raicu-reg-coh}{article}{
   author={Raicu, Claudiu},
   title={Regularity and cohomology of determinantal thickenings},
   journal={Proc. Lond. Math. Soc. (3)},
   volume={116},
   date={2018},
   number={2},
   pages={248--280},
}

\bib{raicu-partitioning}{article}{
   author={Raicu, Claudiu},
   title={Feasibility criteria for partitioning problems with many repeated parts},
   journal = {arXiv},
   number = {1909.02155},
   date={2019}
}

\bib{raicu-weyman}{article}{
   author={Raicu, Claudiu},
   author={Weyman, Jerzy},
   title={Local cohomology with support in generic determinantal ideals},
   journal={Algebra \& Number Theory},
   volume={8},
   date={2014},
   number={5},
   pages={1231--1257},
}

\bib{stanley}{book}{
   author={Stanley, Richard P.},
   title={Combinatorics and commutative algebra},
   series={Progress in Mathematics},
   volume={41},
   edition={2},
   publisher={Birkh\"{a}user Boston, Inc., Boston, MA},
   date={1996},
   pages={x+164},
   isbn={0-8176-3836-9},
   review={\MR{1453579}},
}

\bib{yanagawa}{article}{
   author={Yanagawa, Kohji},
   title={Alexander duality for Stanley-Reisner rings and squarefree $\bold
   N^n$-graded modules},
   journal={J. Algebra},
   volume={225},
   date={2000},
   number={2},
   pages={630--645},
}

		\end{biblist}
	\end{bibdiv}

\end{document}